\documentclass[preprint,12pt]{elsarticle}

\usepackage{amscd}
\usepackage{amsmath}
\usepackage{amsthm}
\usepackage{amsfonts}
\usepackage{graphicx}
\usepackage{amssymb}
\usepackage{color}
\usepackage{amsbsy}
\usepackage{graphicx}
\usepackage{amssymb,color,amsbsy}

\usepackage{float}
\usepackage{comment}

\usepackage[ruled]{algorithm2e}

\usepackage[matrix,arrow]{xy}
\usepackage{mathabx}   

\DeclareMathAlphabet{\mathpzc}{OT1}{pzc}{m}{it}

\usepackage{pgfpages}
\usepackage{tikz}
\usetikzlibrary{shapes.geometric}
\usetikzlibrary{decorations.pathmorphing}
\usetikzlibrary{arrows}
\usetikzlibrary{backgrounds}
\usetikzlibrary{positioning}
\usetikzlibrary{fit}
\usepackage{caption}
\usepackage{amsthm}

\usepackage{amsmath}

\usepackage[pagebackref=true, bookmarksopen=true,colorlinks=true, linkcolor=red,citecolor=blue]{hyperref}

\usepackage[capitalise]{cleveref}  

\global\long\def\R{\mathbb{R}}%

\global\long\def\ii{\cap}%

\global\long\def\u{\cup}%

\global\long\def\I{\bigcap}%

\global\long\def\U{\bigcup}%

\global\long\def\s{\subset}%

\global\long\def\p{\prime}%

\global\long\def\P{\prime}%

\global\long\def\sm{\sum}%

\global\long\def\ñ{\sim}%

\usepackage{tikz-cd}

\usepackage{tkz-graph}

\usepackage{multicol}

\usepackage{subcaption}

\newtheorem{theorem}{Theorem}[section]

\newtheorem{conjecture}[theorem]{Conjecture}
\newtheorem{corollary}[theorem]{Corollary}

\newtheorem{lemma}[theorem]{Lemma}

\newtheorem{problem}[theorem]{Problem}

\newcommand{\Sachs}[1]{\operatorname{Sachs}(#1)}

\usepackage{comment}
\begin{document}

\begin{abstract}
	
	A graph is almost bipartite if it contains exactly one odd cycle,
	and it is König–Egerváry if $\alpha(G)+\mu(G)$ equals the order of $G$.
	We introduce the class of Bipartite–Almost Bipartite graphs (BAB-graphs),
	defined through a controlled union of a bipartite graph and
	several almost bipartite non–König–Egerváry graphs. This family unifies
	and generalizes the previously studied classes of almost bipartite
	non–König–Egerváry and $R$-disjoint graphs. While an almost
	bipartite non–König–Egerváry graph contains a single odd cycle,
	an $R$-disjoint graph has exactly $k$ pairwise disjoint odd cycles.
	A BAB-graph may contain many odd cycles that are not necessarily disjoint. 
	
	We describe the structure of BAB-graphs by means of the Gallai–Edmonds
	decomposition of the graph and obtain explicit expressions for
	$\text{nucleus}(G)$, $\text{diadem}(G)$, and $\text{ker}(G)$, which
	allow us to extend several known results for the previous classes.
	Moreover, we show that the determinant of the adjacency matrix of
	a BAB-graph can be factorized in terms of the determinants of
	the adjacency matrices of its component graphs. As a consequence, we confirm the conjecture stating the validity of this factorization for $R$-disjoint graphs. Finally,
	we derive combinatorial consequences of these results and establish
	new bounds for $\left|\text{corona}(G)\right|+\left|\text{ker}(G)\right|$.
	
\end{abstract}

\begin{keyword} 	König–Egerváry graphs, Independent sets, Matching, Almost bipartite, Structural, Determinant factorization	\MSC 15A09, 05C38 \end{keyword}
\begin{frontmatter} 
	\title{On Bipartite–Almost Bipartite Graphs and the Determinantal Factorization}

	\author[pan,daj]{Kevin Pereyra} 	\ead{kdpereyra@unsl.edu.ar}


	\address[pan]{Universidad Nacional de San Luis, Argentina.} 	\address[daj]{IMASL-CONICET, Argentina.} 
	\date{Received: date / Accepted: date} 
	
\end{frontmatter} %

\section{Introduction}

Let $\alpha(G)$ denote the cardinality of a maximum independent set,
and let $\mu(G)$ be the size of a maximum matching in $G=(V,E)$.
It is known that $\alpha(G)+\mu(G)$ equals the order of $G$,
in which case $G$ is a König--Egerváry graph 
\cite{deming1979independence,gavril1977testing,stersoul1979characterization}.
Various properties of König--Egerváry graphs were presented in 
\cite{bourjolly2009node,jarden2017two,levit2006alpha,levit2012critical}.
It is known that every bipartite graph is a König--Egerváry graph 
\cite{egervary1931combinatorial}. 

Let $\Omega^{*}(G)=\left\{ S:S\textnormal{ is an independent set of }G\right\}$,
$\Omega(G)=\{S:S$ is a maximum independent set of $G\}$,
$\textnormal{core}(G)=\I\left\{ S:S\in\Omega(G)\right\}$ 
\cite{levit2003alpha+}, and 
$\textnormal{corona}(G)=\U\left\{ S:S\in\Omega(G)\right\}$ 
\cite{boros2002number}. 
The number $d_{G}(X)=\left|X\right|-\left|N(X)\right|$ is the 
difference of the set $X\s V(G)$, and 
$d(G)=\max\{d_{G}(X):X\s V(G)\}$ is called the \emph{critical difference} of $G$.
A set $U\s V(G)$ is \emph{critical} if $d_{G}(U)=d(G)$ 
\cite{zhang1990finding}. 
The number $d_{I}(G)=\max\left\{ d_{G}(X):X\in\Omega^{*}(G)\right\}$ 
is called the \emph{critical independence difference} of $G$. 
If a set $X\s\Omega^{*}(G)$ satisfies $d_{G}(X)=d_{I}(G)$, then it is called 
a \emph{critical independent set} \cite{zhang1990finding}. 
Clearly, $d(G)\ge d_{I}(G)$ holds for every graph. 
It is known that $d(G)=d_{I}(G)$ for all graphs 
\cite{zhang1990finding}. 
We define $\textnormal{ker}(G)=\I\{ S:S$ is a critical independent set of $G\}$ 
\cite{levit2012vertices,lorentzen1966notes,schrijver2003combinatorial}.
Let $\textnormal{nucleus}(G)=\I\{S:S$ is a maximum critical independent 
set of $G\}$ \cite{jarden2019monotonic}, and 
$\textnormal{diadem}(G)=\U\{S:S$ is a critical independent set of $G\}$ 
\cite{short2015some}.

A graph $G$ is almost bipartite if it has exactly one odd cycle. 
It is known that $\textnormal{ker}(G)\s\textnormal{core}(G)$ holds for every graph 
\cite{levit2012vertices}. 
Equality holds for bipartite graphs \cite{levit2013critical}, 
for unicyclic non–König–Egerváry graphs \cite{levit2011core}, 
for almost bipartite non–König–Egerváry graphs 
\cite{levit2025almost,levit2024almost}, and 
for $R$-disjoint graphs, which generalize the class of almost bipartite 
non–König–Egerváry graphs \cite{kevin2025RDG}.

In \cite{edmonds1965paths}, Edmonds introduced the following concepts 
relative to a matching $M$ of a graph $G$ and its subgraphs. 
An $M$-blossom of $G$ is an odd cycle of length $2k+1$ with $k$ edges in $M$. 
The vertex not saturated by $M$ in the cycle is called the \emph{base} of the blossom. 
An $M$-stem is an $M$-alternating path of even length (possibly zero) 
connecting the base of the blossom with a vertex not saturated by $M$. 
The base is the only common vertex between the blossom and the stem. 
An $M$-flower is a blossom joined with a stem. 
The vertex not saturated by $M$ in the stem is called the \emph{root} of the flower.

In \cite{stersoul1979characterization}, Sterboul introduced the concept 
of a \emph{posy} for the first time. 
An $M$-posy consists of two vertex-disjoint blossoms joined by an $mm$-$M$-path. 
The endpoints of the path are exactly the bases of the two blossoms. 
There are no internal vertices of the path inside the blossoms.

\begin{theorem}
	\label{sterboul} 
	For a graph $G$, the following properties are equivalent: 
	\begin{itemize}
		\item $G$ is a non–König--Egerváry graph,
		\item For every maximum matching $M$, there exists an $M$-flower or an $M$-posy in $G$,
		\item For some maximum matching $M$, there exists an $M$-flower or an $M$-posy in $G$.
	\end{itemize}
\end{theorem}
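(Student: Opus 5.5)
We prove the cycle of implications (ii)$\Rightarrow$(iii)$\Rightarrow$(i)$\Rightarrow$(ii). The step (ii)$\Rightarrow$(iii) is trivial, since every graph has at least one maximum matching. The substance lies in the other two implications. Both rest on the standard reformulation of the König--Egerváry property: $G$ is König--Egerváry if and only if for some (equivalently, every) maximum matching $M$ there is an independent set $S$ such that every edge of $M$ has exactly one endpoint in $S$ and every $M$-unsaturated vertex lies in $S$. Indeed, $\alpha(G)+\mu(G)=|V|$ forces a maximum independent set to meet each matching edge once and to contain all exposed vertices, and the converse is a count.

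\textbf{(iii)$\Rightarrow$(i).} Suppose $M$ is maximum and an $M$-flower or $M$-posy exists, and assume $G$ is König--Egerváry with $S$ as above. Along an $M$-alternating path starting at an exposed vertex (which lies in $S$), membership in $S$ is forced to alternate:
\begin{itemize}
\item the exposed vertex is in $S$, so its non-matching neighbour on the path is outside $S$;
\item the mate of that neighbour is then in $S$, and so on.
\end{itemize}
Following the stem of a flower, the base of the blossom lies in $S$. The blossom is an odd cycle with $k$ matching edges and $2k+1$ vertices. Since $S$ meets each matching edge exactly once and is independent, $S$ contains exactly $k$ of the remaining $2k$ vertices together with the base, i.e.\ $k+1$ vertices of the cycle. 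An independent set in $C_{2k+1}$ has at most $k$ vertices, a contradiction.

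For a posy, the connecting path has matching edges at both ends (it is an $mm$-path). Since each matching edge has exactly one endpoint in $S$ and the non-matching edges force alternation, exactly one of the two bases lies in $S$. Applying the same parity count to the blossom whose base is in $S$ gives the contradiction.

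\textbf{(i)$\Rightarrow$(ii).} This is the main obstacle. Fix an arbitrary maximum matching $M$ and suppose there is neither an $M$-flower nor an $M$-posy; we construct $S$ as above, showing $G$ is König--Egerváry. The plan is an Edmonds-style labelling.

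\begin{enumerate}
\item Let $A$ be the set of vertices reachable from an exposed vertex by an even-length $M$-alternating path (including the exposed vertices), and let $B$ be those reachable by an odd-length one.
\item $A\cap B=\emptyset$. A vertex in both classes yields, by the usual argument of taking the shortest such walks, an odd cycle through which an $M$-flower exists, contradicting the assumption.
\item $A$ is independent. An edge inside $A$ combined with the two alternating paths either closes an augmenting path (impossible, as $M$ is maximum) or produces a blossom with stem (a flower).
\end{enumerate}
Put $A\subseteq S$. The matching edges with an endpoint in $A$ have their other endpoint in $B$, and $B$ contains no exposed vertex, since otherwise there would be an augmenting path.

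It remains to handle the matching edges $uv$ untouched by $A\cup B$. These must be split consistently so that the chosen sides are independent from each other and from $A$. Orient the choice by a $2$-SAT/implication argument: choosing one endpoint of a matching edge into $S$ forces choices along alternating paths. An inconsistency means some endpoint $x$ forces its own mate, which yields an odd closed alternating structure, i.e.\ an odd cycle whose alternating return forms a blossom. Such a blossom, joined to another blossom by an $mm$-alternating path, gives an $M$-posy, and if it is reachable from $A$ it gives a flower. Carefully extracting the two disjoint blossoms and the connecting path from a contradictory implication cycle (shortcutting walks into paths) is the delicate part. I would handle it by choosing a minimal contradictory implication sequence and arguing via minimality that the repeated vertices produce exactly two vertex-disjoint odd cycles linked by an $mm$-path. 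Once consistency is established, $S$ is a valid independent set meeting every matching edge once and containing all exposed vertices, so $G$ is König--Egerváry.
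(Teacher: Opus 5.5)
The paper does not prove this statement; it quotes it from Sterboul. Your argument therefore has to stand on its own.

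Your implications (ii)$\Rightarrow$(iii) and (iii)$\Rightarrow$(i) are fine, with one slip in the posy case. A non-matching edge only forces ``in $S$'' to ``out of $S$'', never the reverse, so you cannot conclude that \emph{exactly} one base lies in $S$. What does follow, and suffices, is that at least one does: if $b_1\notin S$ then $M(b_1)\in S$, so the next vertex on the path is out of $S$, its mate is in $S$, and so on up to $b_2\in S$. The first half of (i)$\Rightarrow$(ii) is also sound: the alternating forest grown from the exposed vertices, $A\cap B=\emptyset$, $A$ independent with $N(A)\subseteq B$, the reduction to $G-(A\cup B)$ with a perfect matching, and the $2$-SAT encoding.

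The genuine gap is the step you flag as delicate, and it is the entire content of the theorem: an inconsistent implication system must produce a posy. An inconsistency needs both $x\Rightarrow M(x)$ and $M(x)\Rightarrow x$, that is, two odd closed $M$-alternating walks, one at $x$ and one at $M(x)$, each with non-matching end edges. You give no argument turning these walks into genuine blossoms and a connecting alternating path, and naive shortcutting destroys alternation or parity.

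Worse, the configuration you aim for does not exist in general. Take $G=K_4$ and $M=\{12,34\}$:
\begin{itemize}
\item any two vertices are adjacent, so the system is inconsistent ($1\Rightarrow 4\Rightarrow 2$ and $2\Rightarrow 3\Rightarrow 1$);
\item $K_4$ is not K\"onig--Egerv\'ary, since $\alpha+\mu=3<4$;
\item there are no exposed vertices, hence no flowers;
\item any two odd cycles of $K_4$ share two vertices.
\end{itemize}
So no minimality argument can yield two vertex-disjoint blossoms. Indeed, the statement is false when read with the definition of posy used in the paper, which requires vertex-disjoint blossoms. In Sterboul's theorem the two blossoms of a posy may overlap. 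Here your implication cycle produces exactly such a posy: the triangles $134$ and $234$, with bases $1$ and $2$, joined by the matching edge $12$.

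To complete the proof you must adopt this weaker notion of posy. You then need an extraction lemma that turns the two odd alternating closed walks into two (possibly intersecting) blossoms whose bases are joined by an $mm$-alternating path. That requires an Edmonds-type shortest-walk or blossom-shrinking analysis, not a bare appeal to minimality.
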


Sterboul \cite{stersoul1979characterization} was the first to characterize
non–König--Egerváry graphs by means of forbidden configurations
relative to a maximum matching. Later, Korach, Nguyen, and Peis 
\cite{korach2006subgraph} reformulated this characterization 
in terms of simpler configurations, unifying flower- and posy-type structures. 
Subsequently, Bonomo et al. \cite{bonomo2013forbidden} obtained a purely 
structural characterization based on forbidden subgraphs. 
More recently, in \cite{jaume2025confpart1,jaume2025confpart2,jaume2025confpart3}, 
results were obtained that simplify the treatment of flower and posy structures.

Given a graph $G$ and an odd cycle $C$ of $G$, we define the \emph{reach set} of $C$ as
\[
R(C):=\U_{F}V(F),
\]
\noindent where the union is taken over all $M$-flowers $F$ of $G$
whose $M$-blossom is $C$, for some maximum matching $M$.

A graph $G$ is \emph{odd cycle disjoint} if every pair of distinct odd cycles 
$C$ and $C^{\P}$ in $G$ satisfies $V(C)\ii V(C^{\P})=\emptyset$. 
An $R$-\emph{disjoint graph} $G$ is a graph containing at least one odd cycle 
such that $R(C)\neq\emptyset$ and $R(C)\ii R(C^{\P})=\emptyset$
for every pair of distinct odd cycles $C$ and $C^{\P}$ of $G$. 
Hence, every $R$-\emph{disjoint graph} is also an odd cycle disjoint graph, 
so each odd cycle is chordless, and moreover $V(C)\s R(C)$.

In an $R$-disjoint graph $G$, one can naturally consider the following 
partition of the vertex set:
\[
\left\{ R(C):C\textnormal{ is an odd cycle of }G\right\}\u\{B(G)\},
\]
\noindent where $B(G)=V(G)-\U_{C}R(C)$. 
This is the \emph{flower decomposition} of $G$. 
It is immediate that $G[B(G)]$ is a bipartite graph, since it contains no odd cycles. 
Moreover, the only odd cycle in $G[R(C)]$ is $C$ itself.

Therefore, an $R$-disjoint graph with a unique odd cycle 
is an almost bipartite non–König--Egerváry graph \cite{kevin2025RDG}.

 \begin{theorem}
 	\cite{kevin2025RDG,levit2025almost} If $G$ is
 	an $R$-disjoint graph with exactly $k$ disjoint odd cycles,
 	then 
 	\begin{itemize}
 		\item $\ker(G)=\textnormal{core}(G),$
 		\item $\left|\textnormal{corona}(G)\right|+\left|\textnormal{core}(G)\right|=2\alpha(G)+k,$
 		\item $\textnormal{corona}(G)\u N(\textnormal{core}(G))=V(G).$
 	\end{itemize}
 \end{theorem}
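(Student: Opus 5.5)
We argue the three items in turn, working in the flower decomposition $\{R(C_1),\dots,R(C_k)\}\u\{B(G)\}$. The strategy is to reduce to the case of a single odd cycle, i.e.\ almost bipartite non--König--Egerváry graphs, together with the bipartite part $G[B(G)]$. The first structural step is to show that every maximum matching $M$ of $G$ behaves well on this decomposition. Concretely, $M$ restricted to each $G[R(C_i)]$ is a maximum matching there and leaves exactly one vertex of $R(C_i)$ unsaturated inside $R(C_i)$. Moreover, $M$ restricted to $G[B(G)]$ is a maximum matching of that bipartite graph, and edges of $M$ never join distinct pieces. We would prove this by Theorem~\ref{sterboul} and an alternating-path exchange. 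If some edge of $M$ crossed from $R(C_i)$ to the outside, or if $M$ were not maximum on a piece, we could build an $M$-flower with blossom $C_i$ whose stem leaves $R(C_i)$. That would enlarge $R(C_i)$, or make it meet $R(C_j)$, contradicting $R$-disjointness. From this we get the additivity identities
\[
\mu(G)=\sum_i \mu(G[R(C_i)])+\mu(G[B(G)]),\qquad \alpha(G)=\sum_i \alpha(G[R(C_i)])+\alpha(G[B(G)]),
\]
where the second follows by a König-type duality argument on the bipartite part.

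Next we identify maximum independent sets. We show that $S\in\Omega(G)$ if and only if $S\cap R(C_i)\in\Omega(G[R(C_i)])$ for each $i$ and $S\cap B(G)\in\Omega(G[B(G)])$, subject to compatibility along the edges between pieces. Then we analyze the vertices of $B(G)$ adjacent to some $R(C_i)$. Using the Gallai--Edmonds decomposition, each $R(C_i)$ lies in the set $D(G)$ of vertices missed by some maximum matching: indeed every vertex of $R(C_i)$ is a root of some flower, hence is unsaturated by some maximum matching. The neighbours of $R(C_i)$ outside it lie in $A(G)$, which forces their absence from every maximum independent set. From this, $\textnormal{core}(G)$ and $\textnormal{corona}(G)$ split piecewise:
\[
\textnormal{core}(G)=\bigcup_i \textnormal{core}(G[R(C_i)])\cup \textnormal{core}(G[B(G)]),
\]
and the analogous formula holds for $\textnormal{corona}(G)$.

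With the decomposition in hand, each item follows from the known one-cycle results in \cite{levit2025almost}. For each $G[R(C_i)]$, which is almost bipartite and non--König--Egerváry, those results give $\ker=\textnormal{core}$, $|\textnormal{corona}|+|\textnormal{core}|=2\alpha+1$, and $\textnormal{corona}\cup N(\textnormal{core})=V$. For the bipartite piece we have $\ker=\textnormal{core}$, and, being König--Egerváry, $|\textnormal{corona}|+|\textnormal{core}|=2\alpha$; the covering identity holds there as well. Summing over the pieces gives $2\alpha(G)+k$ for the second item. The third item follows by taking unions, once we check that $N$ computed in $G$ contains $N$ computed in each piece. For the first item, $\ker(G)\subseteq\textnormal{core}(G)$ always holds. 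For the reverse inclusion, we show that critical independent sets restrict to critical independent sets of the pieces, using $d(G)=\sum d(\text{pieces})$, which holds because $d$ is additive over the decomposition via $d(G)=|V|-2\mu(G)$ on the relevant part.

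The main obstacle is the first step: proving that maximum matchings and independent sets do not interact across pieces, in particular ruling out edges of $M$ between $B(G)$ and $R(C_i)$. The plan is to take a hypothetical crossing configuration, switch $M$ along an even alternating path to a new maximum matching $M'$, and thereby exhibit an $M'$-flower on $C_i$ containing a vertex outside $R(C_i)$. This contradicts the definition of $R(C_i)$ as the union over all maximum matchings.
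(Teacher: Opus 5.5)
Your second step, the piecewise splitting of $\textnormal{core}(G)$ and $\textnormal{corona}(G)$ over the flower decomposition, has a genuine gap. The justification you give is false, and the hypothesis that actually makes the splitting true is never used.

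Label the piece $G_1$ of \cref{Figura3} as follows: two leaves $y_1,y_2$ on $x$, the path $x\,p\,q\,t_1$, and the triangle $t_1t_2t_3$. Then $A(G_1)=\{x,q\}$. Add a disjoint edge $uw$ and the single edge $qu$. The resulting graph is $R$-disjoint with $k=1$, $R(C)=V(G_1)$ and $B(G)=\{u,w\}$. Your claims all fail on it:
\begin{itemize}
\item $q\in R(C)\cap A(G)$. A flower contains its whole stem, not just its root, and by \cref{2} every vertex of $\partial(R(C))$ lies in $A(G)$, so $R(C)\not\subseteq D(G)$.
\item The outside neighbour $u$ of $R(C)$ lies in $C(G)$, not $A(G)$.
\item Both $q$ and $u$ lie in maximum independent sets, namely $\{y_1,y_2,q,t_2,w\}$ and $\{y_1,y_2,p,t_2,u\}$.
\end{itemize}
So membership in $A(G)$ does not exclude a vertex from maximum independent sets, and independent sets of different pieces can interact through the gluing edges.

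They really do interact. Joining $q$ to both $u$ and $w$ gives exactly the graph of \cref{Figura3}. It satisfies the same gluing conditions ($q\in A(G_1)$, $u,w\in C(B)$), but there $q$ lies in no maximum independent set, and
\[
\textnormal{core}(G)=\{y_1,y_2,p\}\ne\{y_1,y_2\}=\textnormal{core}(G_1)\cup\textnormal{core}(B).
\]
That graph fails to be $R$-disjoint only because of the new triangle $quw$. Your argument uses nothing but the gluing conditions, so it cannot tell the two cases apart.

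The missing ingredient is an extension argument that uses the absence of odd cycles other than the $C_i$. Take $v$ in a piece $P$ and a maximum independent set $S_P$ of $P$ that contains $v$ (for the corona) or avoids $v$ (for the core). You must complete $S_P$ to a maximum independent set of $G$. The argument runs as follows.
\begin{itemize}
\item Every other piece has a maximum independent set inside $D\cup C$: its nucleus, one vertex of its cycle, and a colour class of each component of its $C$.
\item By \cref{2}, the only gluing edges that can still conflict go from $Z=S_P\cap A(P)$, with $P\neq B$, into $C(B)$.
\item Use a maximum matching that matches $A(P)$ into the isolated part $X_P$ of $D(P)$. Shrink $Z$ to a minimal set with $|N(Z)\cap X_P|=|Z|$, keeping $v$; for the core, $v\in X_P$ by \cref{20}, and you keep its matched partner in $A(P)$ instead.
\item Such a minimal $Z$ is connected through $X_P$, so any two of its vertices are joined by an even path in $P$.
\item If $Z$ had neighbours in both colour classes of one component of $G[C(B)]$, this would close an odd cycle through $B$, which is impossible. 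So the opposite colour class completes the set.
\end{itemize}
Only with this in hand does your summation over pieces go through.

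There are also smaller errors:
\begin{itemize}
\item $M$ need not leave exactly one vertex of $R(C_i)$ unsaturated: $G_1$ has $8$ vertices and $\mu(G_1)=3$.
\item $d\ne|V|-2\mu$ on non-K\"onig--Egerv\'ary pieces: $d(G_1)=1$ while $|V(G_1)|-2\mu(G_1)=2$. Additivity of $d$ should instead come from $d=|X|-|A(G)|$ (\cref{7}), applied to $G$ and to each piece.
\item The matching step you call the main obstacle is immediate from \cref{2} and \cref{ge}.
\end{itemize}
The paper itself only cites this theorem. Its \cref{22} shows that once extra odd cycles are allowed only an inequality survives, which is the same phenomenon.
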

 
 In \cref{sss1}, we provide a brief review of the notation that will be used throughout the paper. 
 In \cref{sss2}, we define and study structural properties of BAB-graphs, 
 a family that naturally generalizes $R$-disjoint graphs. Essentially, 
 a BAB-graph is constructed from a bipartite graph and several almost bipartite 
 non-König--Egerváry graphs, while preserving the structure induced by the 
 Gallai--Edmonds decomposition when connecting them. In this way, any result valid for 
 BAB-graphs also applies to almost bipartite non-König--Egerváry graphs 
 and to $R$-disjoint graphs. In this section, we obtain explicit formulas for 
 $\text{nucleus}(G)$ and $\text{diadem}(G)$, which allow us to generalize 
 some known results for $R$-disjoint graphs. 
 
 In \cref{sss3}, we prove that the determinant of the 
 adjacency matrix of a BAB-graph can be factorized in terms of the 
 determinants of the adjacency matrices of the graphs from which it is generated. 
 As a consequence, we confirm Conjecture 5.4 from \cite{kevin2025RDG}, which 
 establishes the validity of such factorization for $R$-disjoint graphs. 
 
 Finally, in \cref{sss4}, we analyze the structure of the independent sets in 
 BAB-graphs, derive bounds for 
 $\lvert\text{corona}(G)\rvert + \lvert\text{core}(G)\rvert$, provide an explicit 
 expression for $\text{ker}(G)$, and show examples of properties that are not preserved 
 when passing from $R$-disjoint graphs to BAB-graphs. In \cref{sss5}, we present several open problems.

\section{Preliminaries}\label{sss1}
All graphs considered in this paper are finite, undirected, and simple. 
For any undefined terminology or notation, we refer the reader to 
Lovász and Plummer \cite{LP} or Diestel \cite{Distel}.

Let \( G = (V, E) \) be a simple graph, where \( V = V(G) \) is the finite set of vertices and \( E = E(G) \) is the set of edges, with \( E \subseteq \{\{u, v\} : u, v \in V, u \neq v\} \). We denote the edge \( e=\{u, v\} \) as \( uv \). A subgraph of \( G \) is a graph \( H \) such that \( V(H) \subseteq V(G) \) and \( E(H) \subseteq E(G) \). A subgraph \( H \) of \( G \) is called a \textit{spanning} subgraph if \( V(H) = V(G) \). 

Let \( e \in E(G) \) and \( v \in V(G) \). We define \( G - e := (V, E \setminus \{e\}) \) and \( G - v := (V \setminus \{v\}, \{uw \in E : u,w \neq v\}) \). If \( X \subseteq V(G) \), the \textit{induced} subgraph of \( G \) by \( X \) is the subgraph \( G[X]=(X,F) \), where \( F:=\{uv \!\in\! E(G) : u, v \!\in \! X\} \).

Given a vertex set $S \subseteq V(G)$, we denote by $\partial(S)$ 
the set of edges having one endpoint in $S$ and the other in $V(G)-S$. 
We also denote by $\partial(S)$ the set of vertices in $S$ 
for which there exists an edge with one endpoint in $S$ and the other in $V(G)-S$.

A \textit{matching} \(M\) in a graph \(G\) is a set of pairwise non-adjacent edges. 
The \textit{matching number} of \(G\), denoted by  \(\mu(G)\), is the maximum cardinality of any matching in \(G\). 
Matchings induce an involution on the vertex set of the graph: \(M:V(G)\rightarrow V(G)\), where \(M(v)=u\) if \(uv \in M\), and \(M(v)=v\) otherwise. 
If \(S, U \subseteq V(G)\) with \(S \cap U = \emptyset\), we say that \(M\) is a matching from \(S\) to \(U\) if \(M(S) \subseteq U\). 

A matching $M$ is \emph{perfect} if $M(v)\neq v$ for every vertex 
of the graph. A graph $G$ is \emph{factor-critical} if $G-v$ has 
a perfect matching for every vertex $v$. 

A vertex set \( S \subseteq V \) is \textit{independent} if, for every pair of vertices \( u, v \in S \), we have \( uv \notin E \). 
The number of vertices in a maximum independent set is denoted by \( \alpha(G) \). 
A \textit{bipartite} graph is a graph whose vertex set can be partitioned into two disjoint independent sets. 
The number of vertices in a graph is called the \textit{order} of the graph.

A \textit{cycle} in $G$ is called \textit{odd} (resp. \textit{even}) if it has an odd (resp. even) number of edges. 
An \textit{even alternating cycle} with respect to a matching $M$ is an even cycle whose edges alternate between belonging and not belonging to $M$. 

Let $M$ be a matching in $G$. A path (or walk) is called \textit{alternating} with respect to $M$ if, for each pair of consecutive edges in the path, exactly one of them belongs to $M$. 
If the matching is clear from context, we simply say that the path is alternating. 
Given an alternating path (or walk), we say that $P$ is: an \textit{\(mm\)-\(M\)path} if it starts and ends with edges in $M$, 
an \textit{\(nn\)-\(M\)path} if it starts and ends with edges not in $M$, 
an \textit{\(mn\)-\(M\)path} if it starts with an edge in $M$ and ends with one not in $M$, 
and an \textit{\(nm\)-\(M\)path} if it starts with an unmatched edge and ends with a matched edge. 

For vertices $u,v \in V(G)$, we denote by $d_G(u,v)$ the length of a shortest path between $u$ and $v$ in $G$. 
We denote by $i(G)$ the number of isolated vertices of the graph $G$.

\section{Structural Properties of BAB-Graphs}\label{sss2}

In this section, we introduce the family of BAB-graphs. Essentially, 
a \emph{BAB-graph} is constructed from a bipartite graph and an 
almost bipartite non-König--Egerváry graph, preserving the structure 
induced by the Gallai--Edmonds decomposition when connecting them. 
This family naturally extends the class of $R$-disjoint graphs; 
in particular, every result obtained for BAB-graphs also applies to 
almost bipartite non-König--Egerváry graphs and $R$-disjoint graphs. 
Throughout this section, we study structural properties of BAB-graphs and, 
as a consequence, derive explicit formulas for $\text{nucleus}(G)$ 
and $\text{diadem}(G)$, thereby generalizing several known results. 

\begin{theorem}
	[\label{ge}\cite{edmonds1965paths,gallai1964maximale} Gallai--Edmonds
	structure theorem]
	Let $G$ be a graph, and define
	\begin{align*}
		D(G) & := \{ v : \textnormal{there exists a maximum matching that misses } v \}, \\
		A(G) & := \{ v : v \textnormal{ is adjacent to some } u \in D(G), \textnormal{ but } v \notin D(G) \}, \\
		C(G) & := V(G) - (D(G) \cup A(G)).
	\end{align*}
	
	\noindent If $G_{1}, \dots, G_{k}$ are the connected components of $G[D(G)]$
	and $M$ is a maximum matching of $G$, then:
	\begin{enumerate}
		\item $M$ covers $C(G)$ and matches $A(G)$ into distinct components of $G[D(G)]$.
		\item Each $G_i$ is a factor-critical graph, and the restriction of $M$ to $G_i$ 
		is a near-perfect matching.
	\end{enumerate}
\end{theorem}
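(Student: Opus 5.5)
The plan is to follow the classical route of Lovász and Plummer \cite{LP}. It rests on two tools: a Tutte--Berge type deficiency count, and the stability lemma. The stability lemma says that deleting a vertex $a\in A(G)$ leaves $D$, $A-a$ and $C$ unchanged, and lowers the matching number by exactly one. The first step is to establish the \emph{Gallai lemma}: if $G$ is connected and $\mu(G-v)=\mu(G)$ for every $v$, then $G$ is factor-critical. Suppose not, and pick a maximum matching $M$ and a missed vertex $u$, and also a second missed vertex $w$ at minimum distance from $u$. Then take a vertex $x$ adjacent to $u$ along a shortest path. Comparing a maximum matching missing $x$ with $M$ via symmetric differences produces either an augmenting path or a closer missed pair. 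Either outcome is a contradiction.

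Next I prove stability. Let $a\in A(G)$. Some maximum matching misses a neighbour $d\in D$ of $a$. I show that every maximum matching of $G$ covers $a$ (otherwise $a$ would lie in $D$), and hence $\mu(G-a)=\mu(G)-1$. I then check that $v\in D(G)$ if and only if $v\in D(G-a)$, by transporting maximum matchings between $G$ and $G-a$ along alternating paths. Iterating, I delete all of $A(G)$. In $G-A(G)$, the components spanned by $D(G)$ satisfy the Gallai lemma hypothesis, since each of their vertices is missed by some maximum matching. Therefore each component $G_i$ is factor-critical. The components on $C(G)$ have perfect matchings, because no vertex there is ever missed.

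The counting step comes next. Since each $G_i$ is odd and factor-critical, with $c$ the number of components of $G[D]$ we get
\[
\mu(G)\le \tfrac12\bigl(|V|-c+|A|\bigr)
\]
by Tutte--Berge with $S=A$. Conversely, the stability lemma gives $\mu(G)=\mu(G-A)+|A|$, and $\mu(G-A)=\tfrac12(|C|+|D|-c)$, so equality holds. Equality in this count forces, for any maximum matching $M$, four things: $M$ matches each vertex of $A$ into $D$; distinct vertices of $A$ go to distinct components $G_i$; $M$ restricted to $C$ is perfect; and $M$ restricted to each $G_i$ is near-perfect. Thus the first item follows from tight counting, and the second from the Gallai lemma plus the near-perfect restriction.

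The main obstacle will be the stability lemma, in particular showing that $D(G-a)=D(G)$. That requires a careful alternating-path argument. My first attempt would use an $M$-alternating path from an $M$-missed vertex, via Edmonds' blossom-shrinking viewpoint on alternating trees. Here $D$ is exactly the set of even vertices, and $A$ is exactly the set of odd vertices of the Edmonds forest.
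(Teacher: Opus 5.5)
The paper gives no proof of this theorem. It quotes it from Edmonds and Gallai and uses it as a black box, so there is no internal argument to compare against. Your outline is the standard Lov\'asz--Plummer proof: Gallai's lemma, the stability lemma, then the Tutte--Berge bound with $S=A(G)$. It is correct. Since $D(G)$ has no neighbours in $C(G)$, each $G_i$ is a component of $G-A(G)$, odd by factor-criticality, and the tight count does force all four conclusions you list.

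Your fallback through Edmonds' alternating forest would also work, and it has the advantage of being algorithmic. However, identifying the even and odd vertices of the final forest with $D(G)$ and $A(G)$ needs a blossom-by-blossom argument that the stability route avoids.

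Two steps need more care than your sketch gives them.

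\textbf{Gallai's lemma.} The minimum of $d(u,w)$ must be taken over all maximum matchings and all pairs of vertices they miss, not over $w$ for a fixed $M$ and $u$. The ``closer missed pair'' you obtain is missed by a different matching, of the form $N\triangle P$. Global minimality is also what guarantees that the matching $N$ missing $x$ covers $w$.

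\textbf{Stability lemma.} The inclusion $D(G)\subseteq D(G-a)$ is immediate: delete the $M$-edge at $a$. The real content is $D(G-a)\subseteq D(G)$. A concrete argument:
\begin{enumerate}
\item Let $N$ be maximum in $G-a$ missing $u$. Let $M$ be maximum in $G$ missing the neighbour $d\in D(G)$ of $a$. Assume $u\neq d$ and that $M$ covers $u$, since otherwise you are done.
\item Let $Q$ be the component of $M\triangle N$ starting at $u$. If $Q$ ends with an $N$-edge, then $M\triangle Q$ is a maximum matching of $G$ missing $u$.
\item Otherwise $Q$ is $N$-augmenting. It cannot avoid $a$, by maximality of $N$ in $G-a$. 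Since $a$ is $N$-exposed, $Q$ ends at $a$.
\item Every vertex of $Q$ is then $M$-covered, so $d\notin V(Q)$. Hence $M\triangle(Q+ad)$ is a maximum matching of $G$ missing $u$.
\end{enumerate}

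Finally, when you apply Gallai's lemma to $G_i$, make explicit that the matchings missing each vertex are maximum matchings of $G-A(G)$, not of $G$. These restrict to maximum matchings of $G_i$ because $G-A(G)$ is the disjoint union of $G[D(G)]$ and $G[C(G)]$. The restrictions of maximum matchings of $G$ are not yet known to be maximum in $G_i$ at that stage.
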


The following lemma is a straightforward consequence of \cref{ge}, 
and will be useful later.

\begin{lemma}
	\label{1}
	Let $G$ and $G^{\P}$ be two graphs. Then the following statements hold:
	\begin{itemize}
		\item $D(G + e) = D(G)$ for every $e \subseteq A(G)$.
		\item $D(G + e) = D(G)$ for every $e = xy$ with $x \in A(G)$ and $y \in C(G)$.
		\item $D((G \cup G^{\P}) + e) = D(G) \cup D(G^{\P})$ for every $e = xy$
		with $x \in A(G)$ and $y \in A(G^{\P})$.
		\item $D((G \cup G^{\P}) + e) = D(G) \cup D(G^{\P})$ for every $e = xy$
		with $x \in A(G)$ and $y \in C(G^{\P})$.
	\end{itemize}
\end{lemma}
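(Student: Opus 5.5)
The plan is to show that the added edge $e$ never changes the matching number and never changes which vertices are missed by some maximum matching. The tools are the Gallai--Edmonds structure theorem (\cref{ge}) and the standard characterization of $D$ via the Tutte--Berge formula with Gallai--Edmonds barrier $A$. I would first record two facts for a graph $H$.

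\textbf{Fact (i).} $\mu(H)=\tfrac12\bigl(|V(H)|-c(H[D(H)])+|A(H)|\bigr)$, where $c$ counts components.

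\textbf{Fact (ii).} If $H'$ is obtained from $H$ by adding edges, $\mu(H')=\mu(H)$, and every vertex of $D(H)$ is still missed by some maximum matching of $H'$, then $D(H')\supseteq D(H)$.

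Equality then needs the reverse inclusion, which I would obtain from the barrier bound. Since $e$ has no endpoint in $D$, the set $A$ still separates the components of $G[D]$ from the rest, and each of them is an odd component of $(G+e)-A$. Hence the Tutte--Berge deficiency with barrier $A$ is unchanged, and $\mu(G+e)\le \mu(G)$. Combined with $\mu(G+e)\ge\mu(G)$, the matching number is preserved.

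For the first two bullets, $e\subseteq A(G)$ or $e=xy$ with $x\in A(G)$, $y\in C(G)$, and in both cases $e$ avoids $D(G)$. Every maximum matching of $G$ is a maximum matching of $G+e$ by the equality above, so $D(G)\subseteq D(G+e)$. Conversely, let $v\in C(G)\cup A(G)$ and suppose a maximum matching $M'$ of $G+e$ misses $v$.
\begin{itemize}
\item If $e\notin M'$, then $M'$ is a maximum matching of $G$ missing $v$, contradicting $v\notin D(G)$.
\item If $e\in M'$, then since the barrier $A$ is tight, the count in Fact (i) forces every maximum matching of $G+e$ to match each vertex of $A$ into a distinct odd component of $G[D]$. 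An edge $e\in M'$ with an endpoint in $A$ matched outside $D$ then contradicts this: $|A|$ vertices would have to be matched into $c(G[D])$ components using only $|A|-1$ available $A$-vertices, so the deficiency count exceeds $|V|-2\mu$.
\end{itemize}
Thus $D(G+e)=D(G)$.

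For the last two bullets, I would apply the same argument to the disjoint union $H=G\cup G^{\P}$. Here $D(H)=D(G)\cup D(G^{\P})$, $A(H)=A(G)\cup A(G^{\P})$ and $C(H)=C(G)\cup C(G^{\P})$, since Gallai--Edmonds sets are computed componentwise. The edge $xy$ then lies inside $A(H)$ (third bullet) or joins $A(H)$ to $C(H)$ (fourth bullet), so both reduce to the first two bullets applied to $H$.

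The main obstacle is the tightness step: showing that a maximum matching of $G+e$ cannot use $e$. I would make it rigorous by counting. Any maximum matching of $G+e$ attains the Tutte--Berge bound with barrier $A$, so it leaves exactly $c(G[D])-|A|$ vertices exposed, all inside $D$. It must also match all of $A$ into distinct components of $G[D]$, and an edge of $M'$ inside $A$ or from $A$ to $C$ would leave one more component of $G[D]$ unmatched from $A$, increasing the number of exposed vertices.
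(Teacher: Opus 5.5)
Your proof is correct and is essentially the argument the paper has in mind. The paper gives no written proof and only calls the lemma a straightforward consequence of \cref{ge}; your observation that $A(G)$ remains a Tutte barrier attaining the deficiency in $G+e$ (so no maximum matching of $G+e$ can use $e$, and $G$ and $G+e$ have the same maximum matchings), together with the reduction of the last two bullets to the first two via the componentwise computation of $D$, $A$, $C$ for $G\cup G^{\prime}$, makes that consequence explicit.
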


Let $B,G_{1},\dots,G_{k}$ be disjoint graphs where $B$ is a bipartite graph and each $G_{i}$ is an almost bipartite non-König--Egerváry graph with unique odd cycle $C(G_{i})$, for $i=1,\dots,k$. Moreover, for simplicity we shall assume that 
\[
R(C(G_{i}))=\U_{F}V(F)=V(G_{i}),
\]
\noindent for every $G_{i}$, that is, every vertex of $G_{i}$ belongs to a flower of $G_{i}$. Otherwise, it is easy to redefine the graphs by removing the vertices of $G_{i}$ that are not in any flower and adding them to $B$ so that this condition holds. Let $G$ be the graph obtained from the disjoint union $B\u G_{1}\u\dotsm\u G_{k}$ by adding edges between vertices belonging to different graphs, such that
\[
\partial_{G}(V(B))\s A(B)\u C(B)\textnormal{ and }\partial_{G}(V(G_{i}))\s A(G_{i}),
\]
\noindent for $i=1,\dots,k$. A graph constructed in this way is called a \emph{Bipartite--Almost Bipartite Connected Graph}, or briefly, a BAB-graph (see for instance \cref{Figura1}). We denote the BAB-graph $G$ by $(B,G_{1},\dots,G_{k})$. From \cref{1} we have the following observation:
\[
D(G)=D(B)\u D(G_{1})\u\dotsm\u D(G_{k}).
\]
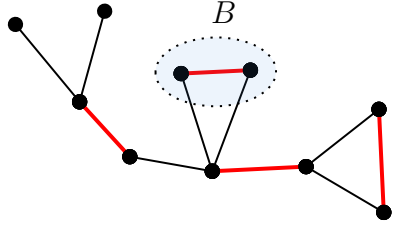
\begin{figure}[h!]
	
	\begin{center}

		\tikzset{every picture/.style={line width=0.75pt}} 
		
		\begin{tikzpicture}[x=0.75pt,y=0.75pt,yscale=-1,xscale=1]
			
			\draw    (198.2,137.55) -- (165.85,98.47) ;
			\draw [shift={(165.85,98.47)}, rotate = 230.38] [color={rgb, 255:red, 0; green, 0; blue, 0 }  ][fill={rgb, 255:red, 0; green, 0; blue, 0 }  ][line width=0.75]      (0, 0) circle [x radius= 3.35, y radius= 3.35]   ;
			\draw [shift={(198.2,137.55)}, rotate = 230.38] [color={rgb, 255:red, 0; green, 0; blue, 0 }  ][fill={rgb, 255:red, 0; green, 0; blue, 0 }  ][line width=0.75]      (0, 0) circle [x radius= 3.35, y radius= 3.35]   ;
			\draw    (210.83,91.8) -- (198.2,137.55) ;
			\draw [shift={(198.2,137.55)}, rotate = 105.43] [color={rgb, 255:red, 0; green, 0; blue, 0 }  ][fill={rgb, 255:red, 0; green, 0; blue, 0 }  ][line width=0.75]      (0, 0) circle [x radius= 3.35, y radius= 3.35]   ;
			\draw [shift={(210.83,91.8)}, rotate = 105.43] [color={rgb, 255:red, 0; green, 0; blue, 0 }  ][fill={rgb, 255:red, 0; green, 0; blue, 0 }  ][line width=0.75]      (0, 0) circle [x radius= 3.35, y radius= 3.35]   ;
			\draw    (223.53,165.25) -- (198.2,137.55) ;
			\draw [shift={(198.2,137.55)}, rotate = 227.55] [color={rgb, 255:red, 0; green, 0; blue, 0 }  ][fill={rgb, 255:red, 0; green, 0; blue, 0 }  ][line width=0.75]      (0, 0) circle [x radius= 3.35, y radius= 3.35]   ;
			\draw [shift={(223.53,165.25)}, rotate = 227.55] [color={rgb, 255:red, 0; green, 0; blue, 0 }  ][fill={rgb, 255:red, 0; green, 0; blue, 0 }  ][line width=0.75]      (0, 0) circle [x radius= 3.35, y radius= 3.35]   ;
			\draw    (265.1,172.58) -- (223.53,165.25) ;
			\draw [shift={(223.53,165.25)}, rotate = 190.01] [color={rgb, 255:red, 0; green, 0; blue, 0 }  ][fill={rgb, 255:red, 0; green, 0; blue, 0 }  ][line width=0.75]      (0, 0) circle [x radius= 3.35, y radius= 3.35]   ;
			\draw [shift={(265.1,172.58)}, rotate = 190.01] [color={rgb, 255:red, 0; green, 0; blue, 0 }  ][fill={rgb, 255:red, 0; green, 0; blue, 0 }  ][line width=0.75]      (0, 0) circle [x radius= 3.35, y radius= 3.35]   ;
			\draw    (312.39,170.25) -- (265.1,172.58) ;
			\draw [shift={(265.1,172.58)}, rotate = 177.17] [color={rgb, 255:red, 0; green, 0; blue, 0 }  ][fill={rgb, 255:red, 0; green, 0; blue, 0 }  ][line width=0.75]      (0, 0) circle [x radius= 3.35, y radius= 3.35]   ;
			\draw [shift={(312.39,170.25)}, rotate = 177.17] [color={rgb, 255:red, 0; green, 0; blue, 0 }  ][fill={rgb, 255:red, 0; green, 0; blue, 0 }  ][line width=0.75]      (0, 0) circle [x radius= 3.35, y radius= 3.35]   ;
			\draw    (349.17,141.31) -- (312.39,170.25) ;
			\draw [shift={(312.39,170.25)}, rotate = 141.8] [color={rgb, 255:red, 0; green, 0; blue, 0 }  ][fill={rgb, 255:red, 0; green, 0; blue, 0 }  ][line width=0.75]      (0, 0) circle [x radius= 3.35, y radius= 3.35]   ;
			\draw [shift={(349.17,141.31)}, rotate = 141.8] [color={rgb, 255:red, 0; green, 0; blue, 0 }  ][fill={rgb, 255:red, 0; green, 0; blue, 0 }  ][line width=0.75]      (0, 0) circle [x radius= 3.35, y radius= 3.35]   ;
			\draw    (351.6,193.35) -- (312.39,170.25) ;
			\draw [shift={(312.39,170.25)}, rotate = 210.51] [color={rgb, 255:red, 0; green, 0; blue, 0 }  ][fill={rgb, 255:red, 0; green, 0; blue, 0 }  ][line width=0.75]      (0, 0) circle [x radius= 3.35, y radius= 3.35]   ;
			\draw [shift={(351.6,193.35)}, rotate = 210.51] [color={rgb, 255:red, 0; green, 0; blue, 0 }  ][fill={rgb, 255:red, 0; green, 0; blue, 0 }  ][line width=0.75]      (0, 0) circle [x radius= 3.35, y radius= 3.35]   ;
			\draw    (349.17,141.31) -- (351.6,193.35) ;
			\draw [shift={(351.6,193.35)}, rotate = 87.33] [color={rgb, 255:red, 0; green, 0; blue, 0 }  ][fill={rgb, 255:red, 0; green, 0; blue, 0 }  ][line width=0.75]      (0, 0) circle [x radius= 3.35, y radius= 3.35]   ;
			\draw [shift={(349.17,141.31)}, rotate = 87.33] [color={rgb, 255:red, 0; green, 0; blue, 0 }  ][fill={rgb, 255:red, 0; green, 0; blue, 0 }  ][line width=0.75]      (0, 0) circle [x radius= 3.35, y radius= 3.35]   ;
			\draw    (249.4,123.28) -- (265.1,172.58) ;
			\draw [shift={(265.1,172.58)}, rotate = 72.34] [color={rgb, 255:red, 0; green, 0; blue, 0 }  ][fill={rgb, 255:red, 0; green, 0; blue, 0 }  ][line width=0.75]      (0, 0) circle [x radius= 3.35, y radius= 3.35]   ;
			\draw [shift={(249.4,123.28)}, rotate = 72.34] [color={rgb, 255:red, 0; green, 0; blue, 0 }  ][fill={rgb, 255:red, 0; green, 0; blue, 0 }  ][line width=0.75]      (0, 0) circle [x radius= 3.35, y radius= 3.35]   ;
			\draw    (284.36,121.56) -- (265.1,172.58) ;
			\draw [shift={(265.1,172.58)}, rotate = 110.68] [color={rgb, 255:red, 0; green, 0; blue, 0 }  ][fill={rgb, 255:red, 0; green, 0; blue, 0 }  ][line width=0.75]      (0, 0) circle [x radius= 3.35, y radius= 3.35]   ;
			\draw [shift={(284.36,121.56)}, rotate = 110.68] [color={rgb, 255:red, 0; green, 0; blue, 0 }  ][fill={rgb, 255:red, 0; green, 0; blue, 0 }  ][line width=0.75]      (0, 0) circle [x radius= 3.35, y radius= 3.35]   ;
			\draw    (284.36,121.56) -- (249.4,123.28) ;
			\draw [shift={(249.4,123.28)}, rotate = 177.17] [color={rgb, 255:red, 0; green, 0; blue, 0 }  ][fill={rgb, 255:red, 0; green, 0; blue, 0 }  ][line width=0.75]      (0, 0) circle [x radius= 3.35, y radius= 3.35]   ;
			\draw [shift={(284.36,121.56)}, rotate = 177.17] [color={rgb, 255:red, 0; green, 0; blue, 0 }  ][fill={rgb, 255:red, 0; green, 0; blue, 0 }  ][line width=0.75]      (0, 0) circle [x radius= 3.35, y radius= 3.35]   ;
			\draw [color={rgb, 255:red, 255; green, 0; blue, 0 }  ,draw opacity=1 ][line width=1.5]    (349.17,141.31) -- (351.6,193.35) ;
			\draw [color={rgb, 255:red, 255; green, 0; blue, 0 }  ,draw opacity=1 ][line width=1.5]    (312.39,170.25) -- (265.1,172.58) ;
			\draw [color={rgb, 255:red, 255; green, 0; blue, 0 }  ,draw opacity=1 ][line width=1.5]    (284.36,121.56) -- (249.4,123.28) ;
			\draw [color={rgb, 255:red, 255; green, 0; blue, 0 }  ,draw opacity=1 ][line width=1.5]    (223.53,165.25) -- (198.2,137.55) ;
			\draw    (198.2,137.55) ;
			\draw [shift={(198.2,137.55)}, rotate = 0] [color={rgb, 255:red, 0; green, 0; blue, 0 }  ][fill={rgb, 255:red, 0; green, 0; blue, 0 }  ][line width=0.75]      (0, 0) circle [x radius= 3.35, y radius= 3.35]   ;
			\draw [shift={(198.2,137.55)}, rotate = 0] [color={rgb, 255:red, 0; green, 0; blue, 0 }  ][fill={rgb, 255:red, 0; green, 0; blue, 0 }  ][line width=0.75]      (0, 0) circle [x radius= 3.35, y radius= 3.35]   ;
			\draw    (223.53,165.25) ;
			\draw [shift={(223.53,165.25)}, rotate = 0] [color={rgb, 255:red, 0; green, 0; blue, 0 }  ][fill={rgb, 255:red, 0; green, 0; blue, 0 }  ][line width=0.75]      (0, 0) circle [x radius= 3.35, y radius= 3.35]   ;
			\draw [shift={(223.53,165.25)}, rotate = 0] [color={rgb, 255:red, 0; green, 0; blue, 0 }  ][fill={rgb, 255:red, 0; green, 0; blue, 0 }  ][line width=0.75]      (0, 0) circle [x radius= 3.35, y radius= 3.35]   ;
			\draw    (349.17,141.31) ;
			\draw [shift={(349.17,141.31)}, rotate = 0] [color={rgb, 255:red, 0; green, 0; blue, 0 }  ][fill={rgb, 255:red, 0; green, 0; blue, 0 }  ][line width=0.75]      (0, 0) circle [x radius= 3.35, y radius= 3.35]   ;
			\draw [shift={(349.17,141.31)}, rotate = 0] [color={rgb, 255:red, 0; green, 0; blue, 0 }  ][fill={rgb, 255:red, 0; green, 0; blue, 0 }  ][line width=0.75]      (0, 0) circle [x radius= 3.35, y radius= 3.35]   ;
			\draw    (312.39,170.25) ;
			\draw [shift={(312.39,170.25)}, rotate = 0] [color={rgb, 255:red, 0; green, 0; blue, 0 }  ][fill={rgb, 255:red, 0; green, 0; blue, 0 }  ][line width=0.75]      (0, 0) circle [x radius= 3.35, y radius= 3.35]   ;
			\draw [shift={(312.39,170.25)}, rotate = 0] [color={rgb, 255:red, 0; green, 0; blue, 0 }  ][fill={rgb, 255:red, 0; green, 0; blue, 0 }  ][line width=0.75]      (0, 0) circle [x radius= 3.35, y radius= 3.35]   ;
			\draw    (284.36,121.56) ;
			\draw [shift={(284.36,121.56)}, rotate = 0] [color={rgb, 255:red, 0; green, 0; blue, 0 }  ][fill={rgb, 255:red, 0; green, 0; blue, 0 }  ][line width=0.75]      (0, 0) circle [x radius= 3.35, y radius= 3.35]   ;
			\draw [shift={(284.36,121.56)}, rotate = 0] [color={rgb, 255:red, 0; green, 0; blue, 0 }  ][fill={rgb, 255:red, 0; green, 0; blue, 0 }  ][line width=0.75]      (0, 0) circle [x radius= 3.35, y radius= 3.35]   ;
			\draw    (265.1,172.58) ;
			\draw [shift={(265.1,172.58)}, rotate = 0] [color={rgb, 255:red, 0; green, 0; blue, 0 }  ][fill={rgb, 255:red, 0; green, 0; blue, 0 }  ][line width=0.75]      (0, 0) circle [x radius= 3.35, y radius= 3.35]   ;
			\draw [shift={(265.1,172.58)}, rotate = 0] [color={rgb, 255:red, 0; green, 0; blue, 0 }  ][fill={rgb, 255:red, 0; green, 0; blue, 0 }  ][line width=0.75]      (0, 0) circle [x radius= 3.35, y radius= 3.35]   ;
			\draw    (249.4,123.28) ;
			\draw [shift={(249.4,123.28)}, rotate = 0] [color={rgb, 255:red, 0; green, 0; blue, 0 }  ][fill={rgb, 255:red, 0; green, 0; blue, 0 }  ][line width=0.75]      (0, 0) circle [x radius= 3.35, y radius= 3.35]   ;
			\draw [shift={(249.4,123.28)}, rotate = 0] [color={rgb, 255:red, 0; green, 0; blue, 0 }  ][fill={rgb, 255:red, 0; green, 0; blue, 0 }  ][line width=0.75]      (0, 0) circle [x radius= 3.35, y radius= 3.35]   ;
			\draw    (351.6,193.35) ;
			\draw [shift={(351.6,193.35)}, rotate = 0] [color={rgb, 255:red, 0; green, 0; blue, 0 }  ][fill={rgb, 255:red, 0; green, 0; blue, 0 }  ][line width=0.75]      (0, 0) circle [x radius= 3.35, y radius= 3.35]   ;
			\draw [shift={(351.6,193.35)}, rotate = 0] [color={rgb, 255:red, 0; green, 0; blue, 0 }  ][fill={rgb, 255:red, 0; green, 0; blue, 0 }  ][line width=0.75]      (0, 0) circle [x radius= 3.35, y radius= 3.35]   ;
			\draw  [fill={rgb, 255:red, 74; green, 144; blue, 226 }  ,fill opacity=0.1 ][dash pattern={on 0.84pt off 2.51pt}] (236.49,122.78) .. controls (236.37,113.19) and (249.88,105.26) .. (266.67,105.06) .. controls (283.45,104.86) and (297.15,112.47) .. (297.26,122.06) .. controls (297.37,131.65) and (283.86,139.58) .. (267.08,139.78) .. controls (250.3,139.98) and (236.6,132.37) .. (236.49,122.78) -- cycle ;
			
			\draw (263,85.4) node [anchor=north west][inner sep=0.75pt]    {$B$};

		\end{tikzpicture}

	\end{center}
	\caption{In this example, a BAB-graph that is not an $R$-disjoint graph is shown.
	}
	\label{Figura1}
	
\end{figure}
Every almost bipartite non-König--Egerváry graph is an $R$-disjoint graph \cite{kevin2025RDG}, and every $R$-disjoint graph is a BAB-graph; this is a consequence of \cref{1}. Therefore, any result on BAB-graphs is also valid for almost bipartite non-König--Egerváry graphs and $R$-disjoint graphs. In this work, we will prove results in the class of BAB-graphs, from which both known and new results in the aforementioned classes can be derived.

\begin{lemma}
	\label{2} \cite{kevin2025RDG} Let $G$ be an $R$-disjoint graph and $C$ an odd cycle of $G$. Then for every vertex $v\in V(G)$:
	\begin{enumerate}
		\item If $v\in\partial(B(G))$, then $v\in A(G)\u C(G).$
		\item If $v\in\partial(R(C))$, then $v\in A(G).$
		\item $G[R(C)]$ is an almost bipartite non-König--Egerváry graph, and every vertex of $R(C)$ belongs to a flower of $G[R(C)]$.
		\item $G[B(G)]$ is a bipartite graph.
		\item $D(G)=D(G[B(G)])\u\U_{C}D(G[R(C)])$, where the union is taken over all odd cycles of $G$. 
	\end{enumerate}
\end{lemma}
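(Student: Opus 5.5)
The plan is to fix a maximum matching $M$ of $G$ and use the flowers that define the sets $R(C)$ to locate every boundary vertex in the Gallai--Edmonds decomposition of \cref{ge}. Two elementary observations drive everything.

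First, let $F$ be an $M$-flower with blossom $C$, root $r$ and stem $r=s_0,s_1,\dots,s_{2t}$ (the base). Then every vertex of $C$ and every even-position stem vertex $s_{2j}$ lies in $D(G)$. Indeed, switching $M$ along the even alternating segment from $r$ to $s_{2j}$ gives a maximum matching missing $s_{2j}$. For $s_{2j}$ equal to the base, we then rotate around the blossom to miss any vertex of $C$. Each odd-position vertex $s_{2j+1}$ is adjacent to $s_{2j}\in D(G)$.

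Second, there is an extension principle. Suppose $x\in V(F)\cap D(G)$ and $y\notin V(F)$ is adjacent to $x$. Choose a maximum matching $M'$ missing $x$ for which the flower is rebuilt with root $x$; such an $M'$ exists by the switchings above. Then $y$ is $M'$-saturated, since otherwise $xy$ augments $M'$. Hence either $y$ or $M'(y)$ can be appended to produce a new $M'$-flower with blossom $C$ that contains $y$. In particular $y\in R(C)$.

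With these two facts, items (2)--(4) and then (5) follow in order.

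For (2), let $v\in\partial(R(C))$ have a neighbour $w\notin R(C)$. Since $v$ lies on some flower with blossom $C$, if $v\in D(G)$ the extension principle puts $w$ into $R(C)$, a contradiction. So $v\notin D(G)$. By the first observation $v$ is then an odd stem vertex, hence adjacent to a vertex of $D(G)$, so $v\in A(G)$.

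For (4), $G[B(G)]$ contains no odd cycle, because every odd cycle $C$ satisfies $V(C)\s R(C)$. For the first half of (3), odd-cycle disjointness and $V(C')\s R(C')$, together with $R(C)\ii R(C')=\emptyset$, show that $C$ is the unique odd cycle in $G[R(C)]$.

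For the rest of (3), I would show that $M$ restricted to $R(C)$ is a maximum matching of $G[R(C)]$ and that each flower witnessing membership in $R(C)$ is a flower of $G[R(C)]$. The flower lies inside $R(C)$ by definition. The point is that no $M$-edge leaves $R(C)$ at a vertex of $D(G)$, by (2). Any augmenting path of $G[R(C)]$ would then lift to an augmenting path of $G$, using the stems inside $R(C)$ to reach roots. Then \cref{sterboul} gives that $G[R(C)]$ is non-König--Egerváry.

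Item (1) I expect to be the main obstacle. Let $v\in\partial(B(G))$ be adjacent to $u\in R(C)$. By (2), $u\in A(G)$, so $u$ is an odd stem vertex in some flower. Assume $v\in D(G)$ and take a maximum matching $M'$ missing $v$. I would try to reroute: follow the stem from $u$ toward the blossom, and use $v$–$u$ followed by that stem portion as a new stem rooted at $v$. This would make $v$ the root of an $M'$-flower with blossom $C$, so $v\in R(C)$, which is impossible. The delicate part is ensuring that the rerouted path is still $M'$-alternating and even, after adjusting $M'$ so that $u$ keeps its matched partner on the stem side. To get this I would use the symmetric difference of $M$ and $M'$ and argue by parity along the stem.

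Finally, for (5), the aim is to apply \cref{1} edge by edge. The subtlety is that \cref{1} needs the local statements $v\in A(G[R(C)])$ and $v\in A(G[B])\u C(G[B])$, not the global ones proved in (1) and (2). I would transfer them by rerunning the arguments of (1) and (2) inside the induced subgraphs. Using the matching restriction from (3), local $D$-membership coincides with global $D$-membership within each part. Then adding the crossing edges one at a time preserves $D$ by \cref{1}, which yields the stated decomposition of $D(G)$.
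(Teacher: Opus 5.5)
The paper imports this lemma from \cite{kevin2025RDG} without proof, so I am judging your argument on its own. The first observation, item (4) and the uniqueness of $C$ in $G[R(C)]$ are fine. Item (1), which you rightly call the crux, is not proved: ``reroute and argue by parity'' is the whole content, and the idea that makes it work is missing. That idea is that no odd cycle passes through a vertex of $B(G)$, since every odd cycle $C'$ has $V(C')\subseteq R(C')$.

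Here is what has to be done. Re-root the flower containing $u=s_{2j+1}$ at $s_{2j}$, giving matching $N_1$ and flower $F_1$. If $v$ is $N_1$-saturated, the component of $N_1\triangle M'$ at $v$ is an even $N_1$-alternating path $Q$ from an $N_1$-unsaturated vertex to $v$, ending with an $N_1$-edge at $v$. Let $q$ be the last vertex of $Q$ in $V(F_1)$. Every vertex of $F_1$ other than its root is $N_1$-matched inside $F_1$, so $Q$ leaves $q$ by a non-matching edge and $Q[q,v]$ has even length. There are three cases.
\begin{itemize}
\item If $q$ is an odd stem vertex, or a vertex of $C$ other than the base, then $uv$, $Q[v,q]$ and a path of $F_1$ from $q$ to $u$ (arc of $C$ chosen by parity) form an odd cycle through $v$. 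This is impossible.
\item If $q=s_{2i}$ with $j<i\le t$, the cycle formed by $uv$, $Q[v,q]$ and $s_{2i}\cdots s_{2j+1}$ is $N_1$-alternating. Switching on it yields a flower rooted at $s_{2j}$ whose stem passes through $u$ and $v$.
\item If $q=s_{2j}$, or $Q$ avoids $F_1$ (including $v$ being $N_1$-unsaturated), switching on $Q$ makes $v$ the root of the stem $v,u,s_{2j+2},\dots,s_{2t}$.
\end{itemize}
In every case $v\in R(C)$, contradicting $v\in B(G)$.

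Your argument for (3) also uses less than it needs. Knowing from (2) that an $M$-edge can leave $R(C)$ only at a vertex of $A(G)$ does not stop it from leaving. By \cref{ge} its other end is in $D(G)$, and by (2) that end is in no other $R(C')$. So it lies in $\partial(B(G))\cap D(G)$, and excluding this is exactly item (1). The ``lift via stems inside $R(C)$'' cannot help, since an augmenting path of $G[R(C)]$ ending at such a vertex can only continue outside $R(C)$. So (3) must come after (1).

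Once (1) and (2) hold, every edge between parts joins $A(G)$ to $A(G)\cup C(G)$, so by \cref{ge} it lies in no maximum matching. Restrictions of maximum matchings to $G[B(G)]$ and to each $G[R(C)]$ are then maximum, because an augmenting path inside a part would augment in $G$. This gives the flower part of (3) and gives (5) immediately. The edge-by-edge use of \cref{1}, with its local-versus-global bookkeeping, is unnecessary.

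The extension principle needs two repairs.
\begin{itemize}
\item Your switchings only re-root at blossom or even stem vertices, so in (2) you must first show that an odd stem vertex is never in $D(G)$. Two adjacent vertices of $D(G)$ lie in a nontrivial factor-critical component of $G[D(G)]$. In an odd-cycle-disjoint graph that component is an odd cycle, by Lov\'asz's ear decomposition. This contradicts $R$-disjointness, or the fact that the stem meets $C$ only at its base.
\item With $z=M'(y)$, neither appending $y$ nor $z$ gives an $M'$-flower: the path $z,y,x,\dots$ has two consecutive non-matching edges at $x$. It is, however, a valid stem for $M'\triangle\{xy,yz\}$, which suffices because $R(C)$ ranges over all maximum matchings.
\end{itemize}
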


As a first application of BAB-graphs, we have a characterization of $R$-disjoint graphs. This is a straightforward application of \cref{ge}, \cref{2}, and \cref{1}.

\begin{theorem}
	\label{3} The graph $G=(B,G_{1},\dots,G_{k})$ is a BAB-graph with exactly $k$ odd cycles if and only if $G$ is an $R$-disjoint graph with exactly $k$ odd cycles.
\end{theorem}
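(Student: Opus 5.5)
The proof splits into the two directions; the reverse direction ($R$-disjoint $\Rightarrow$ BAB) is essentially the content of \cref{2}. Given an $R$-disjoint graph $G$ with odd cycles $C_1,\dots,C_k$, I take $B:=G[B(G)]$ and $G_i:=G[R(C_i)]$. By \cref{2}(4), $B$ is bipartite. By \cref{2}(3), each $G_i$ is almost bipartite and non-König--Egerváry, with $R(C(G_i))=V(G_i)$. Since the flower decomposition partitions $V(G)$, every edge of $G$ outside these induced subgraphs joins two different parts. It then remains to check the boundary conditions. Here I must compare $A(G)$, $C(G)$ with $A(B)$, $C(B)$ and $A(G_i)$, $C(G_i)$. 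By \cref{2}(5), $D(G)$ is the union of the $D$-sets of the parts. Hence a boundary vertex $v\in\partial(R(C_i))$ lies in $A(G)$, so $v\notin D(G)$, so $v\notin D(G_i)$. Moreover $v\in A(G_i)$: its $D(G)$-neighbour must lie in $R(C_i)$, since by \cref{ge} the set $A(G)$ is matched into components of $G[D(G)]$ and such a component is contained in one part. In a non-König--Egerváry almost bipartite graph in which every vertex lies on a flower, we also have $C(G_i)=\emptyset$, which settles that case. For $\partial(B(G))$, \cref{2}(1) gives $v\in A(G)\u C(G)$, so $v\notin D(G)\supseteq D(B)$, hence $v\in A(B)\u C(B)$. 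The number of odd cycles is $k$ by construction.

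For the forward direction, let $G=(B,G_1,\dots,G_k)$ be a BAB-graph with exactly $k$ odd cycles. Each $C(G_i)$ is an odd cycle, and these are $k$ distinct cycles, so they are \emph{all} the odd cycles of $G$; the added edges create none. I would show that $R(C(G_i))=V(G_i)$ in $G$; pairwise disjointness and nonemptiness of the reach sets then follow immediately.

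\textbf{Step 1: $R(C(G_i))\supseteq V(G_i)$.} By the observation after \cref{1}, $D(G)=D(B)\u\U D(G_j)$, and a repeated application of \cref{ge} shows that the union of maximum matchings of the parts is a maximum matching of $G$. Indeed, the deficiencies add, because the new edges only touch $A$ and $C$ vertices, which are covered anyway. Every flower of $G_i$ with respect to a maximum matching of $G_i$ is therefore a flower of $G$ with respect to a maximum matching of $G$.

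\textbf{Step 2: $R(C(G_i))\subseteq V(G_i)$.} This is the main obstacle. Take any maximum matching $M$ of $G$ and an $M$-flower with blossom $C(G_i)$. The stem is an even alternating path from the base to an $M$-exposed vertex $r\in D(G)$. I must show that the stem never leaves $V(G_i)$. The approach is to use \cref{ge}: the exposed root and every vertex of the stem reached by an even-length alternating path from it lie in $D(G)$, while the odd positions lie in $A(G)$. Suppose the stem crosses a boundary edge $xy$ with $x\in\partial(V(G_i))\subseteq A(G_i)$. I would argue that $x$ sits in $A(G)$, which is matched by $M$ into a component of $G[D(G)]$, and that this component lies inside a single part. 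Tracing the alternation then shows that the crossing edge is an unmatched edge leaving an $A$-vertex towards another $A$- or $C$-vertex. The subsequent matched edge goes to a vertex which, by parity, would have to be in $D$ and reachable in the stem at an even position, yet it sits across the boundary. This yields a contradiction with the fact that the base of $C(G_i)$ lies in $D(G_i)$, hence is $D$ in $G$, and the even positions of an alternating path from a $D$-vertex stay in $D$. If this parity argument turns out to be delicate, my fallback is to note that any alternating path from an exposed vertex to the blossom must enter $V(G_i)$ through an $A$-vertex along a matched edge. That contradicts the fact that $A(G_i)$-vertices are matched inside $G_i$ or to other $D$-components, and blossoms cannot be attached that way.
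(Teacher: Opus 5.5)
Your overall route is the one the paper intends; the paper only says the result is a straightforward application of \cref{ge}, \cref{2} and \cref{1}. You use \cref{2} for $R$-disjoint $\Rightarrow$ BAB, and for the converse you show $R_G(C(G_i))=V(G_i)$.

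The reverse direction is fine, but only through your second argument. The first one (the $D(G)$-neighbour of $v$ lies in $R(C_i)$ because $A(G)$ is matched into components of $G[D(G)]$) does not locate that neighbour in $v$'s part. The observation that the Gallai--Edmonds set $C(G_i)$ is empty does settle it. To justify it: in a flower, the blossom vertices and the stem vertices at even distance from the root lie in $D(G_i)$. The remaining stem vertices are adjacent to these, so every flower vertex lies in $D(G_i)\cup A(G_i)$.

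Step 1 is also correct. The clean justification is Tutte--Berge with barrier $A(B)\cup\bigcup_i A(G_i)$, since deleting it removes every added edge.

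The genuine gap is Step 2, which you rightly flag as the crux but do not actually prove.
\begin{itemize}
\item The contradiction you draw does not follow. A vertex $M(y)\in D(G)$ lying across the boundary is perfectly possible in itself, for instance a vertex of $D(B)$.
\item The principle that even positions of an alternating path from a $D$-vertex stay in $D$ is false in general. Take a triangle $vuw$ with pendant path $w x y$, and $M=\{uv,wx\}$. Then $v\in D$, but the alternating path $v,w,x$ has $x\notin D$ at position $2$. Parity must be counted from the exposed root.
\item Your fallback rests on a false premise. No $M$-edge crosses between parts: by \cref{ge}, $M$-edges at $A(G)$ go into $D(G)$, and no added edge meets $D(G)$. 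So a stem could only cross along an unmatched edge, never enter through a matched one.
\end{itemize}

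The missing step is one line, using facts you already listed. Write the stem as $r=s_0,s_1,\dots,s_{2m}=b$ with $s_0$ exposed.
\begin{itemize}
\item Flipping $M$ along $s_0\cdots s_{2j}$ gives a maximum matching exposing $s_{2j}$, so every even-indexed stem vertex lies in $D(G)$.
\item Every stem edge $s_ts_{t+1}$ has an even-indexed endpoint.
\item By the BAB boundary conditions $\partial(V(B))\subseteq A(B)\cup C(B)$ and $\partial(V(G_j))\subseteq A(G_j)$, together with $D(G)=D(B)\cup\bigcup_j D(G_j)$, no edge between different parts has an endpoint in $D(G)$.
\end{itemize}
Hence the stem uses no crossing edge. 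Since it contains $b\in V(G_i)$, it lies in $V(G_i)$. This gives $R_G(C(G_i))\subseteq V(G_i)$, and together with Step 1 the forward direction is complete.
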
 

In the remainder of this section, we will find explicit formulas for $\text{nucleus}(G)$ and $\text{diadem}(G)$ in terms of the Gallai-Edmonds decomposition of the BAB-graph. As a consequence, we generalize some known results in almost bipartite non-König--Egerváry graphs and $R$-disjoint graphs.

The following lemma is a straightforward consequence of the ear decomposition theorem for factor-critical graphs by Lovász \cite{lovasz1972structure}. 

\begin{lemma}
	\label{4} \cite{kevin2025RDG} Let $G$ be an almost bipartite non-König--Egerváry graph with a unique odd cycle $C$. Then $C$ is a connected component in $G[D(G)]$, and the remaining components are isolated vertices.
\end{lemma}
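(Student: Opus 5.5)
The plan is to combine the Gallai--Edmonds theorem (\cref{ge}) with Lovász's ear decomposition for factor-critical graphs. Throughout, let $G$ be almost bipartite and non--König--Egerváry with unique odd cycle $C$, and fix a maximum matching $M$.

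\textbf{Step 1: $V(C)\s D(G)$.} By \cref{sterboul} there is an $M$-flower or an $M$-posy in $G$. Since a posy contains two vertex-disjoint odd cycles and $C$ is the only odd cycle, the configuration must be an $M$-flower, and its blossom is $C$. Let $r$ be the root of the flower and $b$ its base. Shifting $M$ along the even stem (taking the symmetric difference with the stem) gives a maximum matching missing $b$. Rotating the matching around the blossom then gives, for each vertex $v$ of $C$, a maximum matching missing $v$. Hence $V(C)\s D(G)$, and because $C$ is connected, $V(C)$ lies inside a single component $H$ of $G[D(G)]$.

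\textbf{Step 2: the component containing $C$ is exactly $C$.} By \cref{ge}, $H$ is factor-critical. By Lovász's theorem, $H$ has an odd ear decomposition starting from an odd cycle: $H=C_{0}+P_{1}+\dots+P_{t}$, where each ear $P_{j}$ has odd length. Suppose $t\ge1$. The first ear $P_{1}$ has odd length and joins two vertices of $C_{0}$ (or is a closed odd ear at one vertex). Together with one of the two arcs of $C_{0}$ between its endpoints, it forms an odd cycle different from $C_{0}$. This contradicts the uniqueness of the odd cycle, so $H=C_{0}$. Since $H$ contains an odd cycle, $C_{0}=C$. Moreover, a factor-critical graph that is a single odd cycle has no chords, so $H=G[V(C)]=C$.

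\textbf{Step 3: every other component is a single vertex.} Any other component $H'$ of $G[D(G)]$ is factor-critical. If $|V(H')|\ge3$, its ear decomposition starts with an odd cycle, which would be disjoint from $C$; this is impossible because $G$ has only one odd cycle. A factor-critical graph has odd order, so $|V(H')|=1$, and $H'$ is an isolated vertex.

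The main obstacle is the careful use of the ear decomposition in Step 2. Ears may be closed (both ends at the same vertex), and one must check that the odd cycle they create is genuinely distinct from $C$. For an open ear this holds because it uses edges outside $C_{0}$. For a closed ear, the ear itself is an odd cycle different from $C_{0}$. Step 1 needs only the routine alternating-path shifts described there.
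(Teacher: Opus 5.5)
Your proof is correct and takes the route the paper indicates. The paper gives no proof of its own; it only says the lemma is a straightforward consequence of Lovász's ear decomposition theorem for factor-critical graphs. You combine that theorem with the factor-criticality of the components of $G[D(G)]$ from the Gallai--Edmonds theorem, and use the $M$-flower guaranteed by Sterboul's theorem to get $V(C)\subseteq D(G)$, which fills in exactly those details.
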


The result of the previous lemma extends to BAB-graphs as a corollary of \cref{2} and \cref{4}. 

\begin{corollary}
	\label{5} Let $G=(B,G_{1},\dots,G_{k})$ be a BAB-graph. Then $C(G_{1}),\dots,C(G_{k})$ are connected components of $G[D(G)]$, and the remaining components are isolated vertices.
\end{corollary}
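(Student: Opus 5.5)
The plan is to compute $G[D(G)]$ directly from the decomposition $D(G)=D(B)\u D(G_{1})\u\dotsm\u D(G_{k})$ recorded after the definition of BAB-graphs (a consequence of \cref{1}). We then identify its components using \cref{4} for each $G_i$ and the bipartite structure of $B$. So the argument has two parts: first describe the induced subgraphs $G[D(B)]$ and $G[D(G_i)]$ separately, and then show that $G$ has no edges joining these pieces.

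\textbf{Step 1 (the pieces).} For each $i$, \cref{4} says that $G_i[D(G_i)]$ has $C(G_i)$ as one component and all other components are isolated vertices. For $B$, every component of $B[D(B)]$ is factor-critical by \cref{ge}. A factor-critical graph with at least two vertices is $2$-edge-connected with an odd ear decomposition, so it contains an odd cycle. Since $B$ is bipartite, every component of $B[D(B)]$ must therefore be a single vertex, i.e., $D(B)$ is independent in $B$.

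\textbf{Step 2 (no extra edges).} Next we check that $G[D(G)]$ is the disjoint union of these induced subgraphs. Consider first an edge $xy$ of $G$ inside a single part, say inside $V(G_i)$ or $V(B)$. The construction only adds edges between different parts, so $xy$ is an edge of that part, and $G[D(G)]$ restricted to each part coincides with $G_i[D(G_i)]$ or $B[D(B)]$. Now consider an edge between different parts. By hypothesis both endpoints lie in $\partial_G(V(G_i))\s A(G_i)$ or $\partial_G(V(B))\s A(B)\u C(B)$. These sets are disjoint from $D(G_i)$ and $D(B)$ respectively, so no added edge has an endpoint in $D(G)$.

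\textbf{Step 3 (conclusion).} Putting these together, the components of $G[D(G)]$ are exactly the components of the $G_i[D(G_i)]$ together with the isolated vertices of $D(B)$. Hence $C(G_1),\dots,C(G_k)$ are components of $G[D(G)]$ and all remaining components are isolated vertices.

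The only slightly delicate point is Step 1 for $B$: we need the fact that a non-trivial factor-critical graph contains an odd cycle. This follows from Lovász's ear decomposition, or more simply: if $B[D(B)]$ had a component $K$ with $|K|\ge 3$, then $K$ would have odd order and, being bipartite, unequal color classes. Deleting a vertex from the smaller class leaves two classes of different sizes, so no perfect matching exists, which contradicts factor-criticality. Note also that, as the paper indicates, we could instead cite \cref{2} to reduce to the $R$-disjoint setting; the direct argument above avoids this.
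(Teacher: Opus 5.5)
Your proof is correct and follows the route the paper intends. The paper's one-line justification via \cref{2} and \cref{4} amounts to three ingredients: the decomposition $D(G)=D(B)\cup D(G_1)\cup\dots\cup D(G_k)$, the boundary conditions that keep every cross edge away from $D(G)$, and \cref{4} applied to each $G_i$. You also make explicit a point the paper leaves implicit, namely that $B[D(B)]$ has only trivial components because a bipartite graph cannot be nontrivially factor-critical.

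One small caveat concerns your closing remark. A BAB-graph need not be $R$-disjoint (see \cref{Figura1}), so \cref{2} cannot literally be invoked to reduce to that setting. What replaces it is the definition of a BAB-graph together with the observation following \cref{1}, which is exactly what your direct argument uses.
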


\begin{lemma}
	\label{6} Let $G$ be a graph and $I$ a critical independent set. Then the following hold:
	\begin{itemize}
		\item $N(I)\ii D(G)=\emptyset$.
		\item $I\ii A(G)=\emptyset.$
		\item There exists a critical independent set $J$ of $G$ such that $J\s D(G)\ii I.$ 
	\end{itemize}
\end{lemma}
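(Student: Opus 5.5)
The plan is to prove the three items in order, using one auxiliary matching fact for the first item and the Gallai--Edmonds structure (\cref{ge}) for the third.

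\textbf{Auxiliary fact: a matching from $N(I)$ into $I$.} Since $I$ is a critical independent set, there is a matching $M_0$ from $N(I)$ into $I$. By Hall's theorem it suffices to check $|N(X)\cap I|\ge |X|$ for every $X\subseteq N(I)$. Suppose instead $|N(X)\cap I|<|X|$. Put $I'=I-(N(X)\cap I)$. No vertex of $X$ has a neighbour in $I'$, so $N(I')\subseteq N(I)-X$. Then $d_G(I')\ge |I|-|N(X)\cap I|-|N(I)|+|X|>d_G(I)$, which contradicts criticality.

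\textbf{First item.} Suppose $v\in N(I)\cap D(G)$, and let $M$ be a maximum matching missing $v$. Follow the component of $M\oplus M_0$ that starts at $v$; it begins with the $M_0$-edge from $v$ into $I$. Because $I$ is independent, every $M$-edge at a vertex of $I$ goes to $N(I)$. Every vertex of $N(I)$ is $M_0$-saturated with partner in $I$. Hence the alternating path stays inside $I\cup N(I)$ and alternates between the two sets. It cannot end at a vertex of $N(I)$: such a vertex reached by an $M$-edge still has its $M_0$-edge available. So the path ends at a vertex of $I$ reached by an $M_0$-edge that is not saturated by $M$. This gives an $M$-augmenting path from $v$, contradicting the maximality of $M$. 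Hence $N(I)\cap D(G)=\emptyset$.

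\textbf{Second item.} If $x\in I\cap A(G)$, then by definition $x$ has a neighbour $u\in D(G)$. Then $u\in N(I)\cap D(G)$, which contradicts the first item. So $I\cap A(G)=\emptyset$.

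\textbf{Third item: setup.} Take $J=I\cap D(G)$; it is independent and $J\subseteq D(G)\cap I$. By the second item, $I-J\subseteq C(G)$. Every neighbour of a vertex of $D(G)$ lies in $D(G)\cup A(G)$, and by the first item none lies in $D(G)$. Therefore $N(J)\subseteq N(I)\cap A(G)$.

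\textbf{Third item: the counting step (main obstacle).} It remains to show $|N(I)|-|N(J)|\ge |I-J|$; this is the key counting step. Fix a maximum matching $M$. By \cref{ge}, $M$ covers $C(G)$ and matches $A(G)$ into $D(G)$, so every $M$-partner of a vertex of $C(G)$ lies in $C(G)$. The map $x\mapsto M(x)$ is therefore an injection from $I-J\subseteq C(G)$ into $N(I)\cap C(G)$. This set is disjoint from $N(J)\subseteq A(G)$. Hence $|N(I)|\ge |N(J)|+|I-J|$, which gives $d_G(J)\ge d_G(I)=d_I(G)$. So $J$ is critical.
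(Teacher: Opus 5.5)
Your proof is correct. It follows the paper's outline for the first two items, making one step rigorous, but takes a genuinely different route for the third.

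For the first item, the paper also starts from a Hall-type matching of $N(I)$ into $I$. Your check via the independent set $I-(N(X)\cap I)$ is the clean form of that computation: the paper's version adjoins $S$ and does not account for $N(S)-I$. The paper then only asserts that it is ``easy to see'' that no maximum matching misses a vertex of $N(I)$. Your alternating path in $M\oplus M_0$ is exactly the argument needed there. Your derivation of the second item from the first is also more direct than the paper's.

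For the third item, the paper argues by iterated reduction:
\begin{itemize}
\item whenever a nonempty $S\subseteq N(I)$ satisfies $|N(S)\cap I|=|S|$, it deletes $N(S)\cap I$ from $I$, which keeps the set critical;
\item once every nonempty $S$ has strict surplus, Hall's theorem matches $N(I)$ into $I-v$ for each $v$;
\item this matching is extended to a maximum matching of $G$ missing $v$, so the reduced set lies in $D(G)$.
\end{itemize}
The extension step tacitly uses $\mu(G)=|N(I)|+\mu(G-I-N(I))$, which holds because every edge at $I$ ends in $N(I)$.

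You instead fix the canonical candidate $J=I\cap D(G)$ and let Gallai--Edmonds do the counting: $I-J\subseteq C(G)$, $N(J)\subseteq A(G)$, and a maximum matching injects $I-J$ into $N(I)\cap C(G)$. Your route gives the sharper statement that $I\cap D(G)$ itself, the largest admissible $J$, is critical, and it avoids both the iteration and the extension step. The paper's route, in exchange, needs only Hall's theorem and counting rather than the full structure theorem.
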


\begin{proof}
	By Hall's Theorem, there is a matching from $N(I)$ into $I$, since if $S\s N(I)$, then\\
	\begin{align*}
		& \left|\left(I-\left(N(S)\ii I\right)\right)\u S\right|-\left|N\left(\left(I-\left(N(S)\ii I\right)\right)\u S\right)\right|\\
		= & \left|I\right|-\left|N(S)\ii I\right|+\left|S\right|-\left|N(S)\ii I\right|-\left(\left|N(I)\right|-\left|S\right|\right)\\
		= & -2\left|N(S)\ii I\right|+2\left|S\right|+d(G)\\
		\le & d(G),
	\end{align*}
	
	\noindent hence $\left|S\right|\le\left|N(S)\ii I\right|$. From this fact, it is easy to see that every maximum matching of $G$ matches all vertices of $I$ into $N(I)$. Therefore, $N(I)\ii D(G)=\emptyset$. If there exists a vertex $v\in I\ii A(G)$, then any maximum matching matches $v$ in $I$, so $I\ii D(G)\neq\emptyset$, a contradiction. 
	
	If there exists a nonempty set $\emptyset\neq S\s N(I)$ such that $\left|N(S)\ii I\right|=\left|S\right|$, then we define $I^{\P}=I-\left(N(S)\ii I\right)$. Now $I^{\P}$ is a critical independent set of $G$, since 
	\begin{align*}
		\left|I^{\P}\right|-\left|N(I^{\P})\right| & =\left|I\right|-\left|N(S)\ii I\right|-\left|N(I)-S\right|\\
		& =\left|I\right|-\left|N(S)\ii I\right|-\left|N(I)\right|+\left|S\right|\\
		& =\left|I\right|-\left|N(I)\right|.
	\end{align*}
	
	\noindent Thus, we may assume that $\left|N(S)\ii I\right|>\left|S\right|$ for every nonempty $\emptyset\neq S\s N(I)$. Then, by Hall's Theorem, there exists a matching from the vertices of $S$ into the vertices of $I-v$ for every $v\in V(G)$. This matching can be extended to a maximum matching in $G$, and hence $I\s D(G)$. 
\end{proof}

\begin{lemma}
	\label{7} Let $G$ be a BAB-graph, and let $X$ be the set of isolated vertices in $G[D(G)]$ and $Y=D(G)-X$. Let $U$, $W$ be a bipartition of $G[C(G)]$, and let $I$ be a critical independent set of $G$. Then the following hold:
	\begin{itemize}
		\item $I\ii Y=\emptyset$.
		\item $N(X)=A(G)$.
		\item $X$ is a critical independent set of $G$. 
		\item $X\u U$ and $X\u W$ are maximum critical independent sets of $G$.
	\end{itemize}
\end{lemma}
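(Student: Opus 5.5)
The plan is to reduce everything to the Gallai--Edmonds decomposition together with \cref{5} and \cref{6}. By \cref{5}, $Y$ is exactly $V(C(G_{1}))\u\dotsm\u V(C(G_{k}))$, and every other component of $G[D(G)]$ is a single vertex of $X$. I also record two facts from the definition of a BAB-graph.
\begin{enumerate}
\item Since $\partial_{G}(V(G_{i}))\s A(G_{i})$ and cycle vertices lie in $D(G_{i})$, every neighbour of a vertex of $C(G_{i})$ off the cycle lies in $A(G)\ii V(G_{i})$.
\item In particular, a vertex of $A(G)$ is adjacent to at most one of the cycles $C(G_{j})$.
\end{enumerate}

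\textbf{First item.} If $v\in I\ii Y$, then the two cycle-neighbours of $v$ lie in $D(G)\ii N(I)$. This contradicts the first item of \cref{6}.

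\textbf{Second item.} Since $X$ consists of isolated vertices of $G[D(G)]$, we get $N(X)\ii D(G)=\emptyset$. Vertices of $D(G)$ have no neighbours in $C(G)$, so $N(X)\s A(G)$. For the converse I use the standard surplus form of the Gallai--Edmonds theorem: every nonempty $S\s A(G)$ is adjacent to at least $|S|+1$ components of $G[D(G)]$. Taking $S=\{a\}$, the vertex $a$ touches at least two components. By fact 2 at most one of them is a cycle, so $a$ has a neighbour in $X$.

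\textbf{Third and fourth items.} I want a matching from $A(G)$ into $X$ saturating $A(G)$. To get it, I would take a maximum matching $M$ and repair any edge $ac$ with $c$ on some $C(G_{j})$. By fact 1, $a\in V(G_{j})$. I would then use that $G_{j}$ is almost bipartite non-König--Egerváry with every vertex on a flower to reroute $a$ to a vertex of $X\ii V(G_{j})$ that $M$ leaves exposed, leaving the cycle with an exposed base. Alternatively, I would try Hall's condition directly on the bipartite graph between $A(G)$ and $X$, combining the surplus $|S|+1$ with fact 2. I expect this step to be the main obstacle, because the surplus bound alone only gives $|N(S)\ii X|\ge |S|+1-(\text{number of cycles touched})$, so the flower structure of the $G_{j}$ must really be used.

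Granting this matching, the third item follows. Let $I$ be any critical independent set. By \cref{6} and the first item there is a critical independent set $J\s D(G)\ii I\s X$. The vertices of $A(G)-N(J)$ are matched into distinct vertices of $X-J$, since they cannot be matched into $J$. Hence $|X-J|\ge |A(G)-N(J)|$, and therefore
\[
d_{G}(X)=|X|-|A(G)|\ge |J|-|N(J)|=d(G).
\]
Thus $X$ is critical.

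For the fourth item, Gallai--Edmonds gives that $G[C(G)]$ has a perfect matching, so $|U|=|W|$. The set $X\u U$ is independent because $N(X)=A(G)$. Also $N(X\u U)\s A(G)\u W$, so $d_{G}(X\u U)\ge |X|+|U|-|A(G)|-|W|=d_{G}(X)$, and $X\u U$ is critical; the same holds for $X\u W$. For maximality, let $I$ be any critical independent set. Then $I\s X\u C(G)$ by \cref{6} and the first item. Since $G[C(G)]$ is bipartite with a perfect matching, $|I\ii C(G)|\le\alpha(G[C(G)])=|U|$. Hence $|I|\le |X|+|U|$.
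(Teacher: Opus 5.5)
\textbf{There is a genuine gap at your central step.} Your arguments for the first two items are correct. For $A(G)\subseteq N(X)$ you use the surplus property of \cref{ge} together with the fact that a vertex of $A(G)$ sees at most one cycle; the paper instead reads this off a specially chosen maximum matching. Your derivations of the third and fourth items from a matching of $A(G)$ into $X$ are also fine. The problem is that this matching is never constructed: you explicitly grant it, and both the criticality of $X$ and the criticality of $X\cup U$ and $X\cup W$ rest on it.

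\textbf{The paper's fix: normalize a maximum matching inside each $G_j$.} Let $M$ be a maximum matching of $G$. It uses no edge between different parts, because such an edge joins a vertex of $A(G)$ to a vertex of $A(G)\cup C(G)$, while \cref{ge} matches $A(G)$ into $D(G)$. Hence $M_j=M\cap E(G_j)$ is a maximum matching of $G_j$. Since $G_j$ is non-K\"onig--Egerv\'ary with a unique odd cycle, a posy is impossible, so \cref{sterboul} gives an $M_j$-flower with blossom $C(G_j)$ and stem $P_j$. Replacing $M_j$ by $M_j\triangle E(P_j)$ keeps the matching maximum, exposes the base, and leaves every other vertex of $C(G_j)$ matched inside the cycle. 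After doing this for all $j$, no vertex of $A(G)$ is matched to a cycle vertex, so by \cref{ge} every vertex of $A(G)$ is matched into $X$. Your route (a) is a vague form of this; note that what gets switched is the whole stem, not only the edge $ac$.

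\textbf{Your Hall route (b) also works, if you localize.} Apply the surplus property inside each part rather than in $G$. From $D(G)=D(B)\cup\bigcup_i D(G_i)$ and the boundary conditions you get $A(G)\cap V(G_j)=A(G_j)$ and $A(G)\cap V(B)=A(B)$. Also, the isolated vertices of each $G_j[D(G_j)]$, and all of $D(B)$, lie in $X$; for $D(B)$ this holds because factor-critical bipartite graphs are single vertices. Now take $S\subseteq A(G)$ and write $S_j=S\cap V(G_j)$ and $S_B=S\cap V(B)$.
\begin{itemize}
\item Gallai--Edmonds in $G_j$ says $S_j$ touches at least $|S_j|+1$ components of $G_j[D(G_j)]$. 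By \cref{4} only one of these is the cycle, so $S_j$ has at least $|S_j|$ neighbours in $X\cap V(G_j)$.
\item Gallai--Edmonds in $B$ gives $S_B$ at least $|S_B|$ neighbours in $D(B)$.
\end{itemize}
These neighbour sets lie in disjoint parts, so $|N(S)\cap X|\ge|S|$, which is Hall's condition. Your remark that the global surplus only yields $|S|+1$ minus the number of cycles touched is exactly why this part-by-part localization is needed.
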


\begin{proof}
	Since $G[Y]$ has no trivial components, every vertex in $Y$ has a neighbor in $Y$. Therefore, a critical independent set $I$ cannot contain vertices from $Y$, because by \cref{5}, $N(I)\ii Y\s N(I)\ii D(G)=\emptyset$, that is, $I\ii Y=\emptyset$. 
	
	Denote $G=(B,G_{1},\dots,G_{k})$, then $C(G_{1}),\dots,C(G_{k})$ are the components of $G[Y]$. We will show that there exists a maximum matching of $G$ that leaves exactly one vertex unmatched in each cycle $C(G_{i})$, for $i=1,\dots,k$. Note that a matching is maximum in $G$ if and only if it is maximum when restricted to each graph $B,G_{1},\dots,G_{k}$. 
	
	Let $M$ be a maximum matching of $G$ and $M_{i}=M\ii E(G_{i})$, for $i=1,\dots,k$. Then $M_{i}$ is a maximum matching of $G_{i}$, and since $G_{i}$ has a unique cycle, by \cref{sterboul}, there exists an $M_{i}$-flower of $G_{i}$ with $M_{i}$-blossom $C(G_{i})$ and $M_{i}$-stem $P_{i}$. Then $M_{i}\triangle E(P_{i})$ is a maximum matching of $G_{i}$ that leaves exactly one vertex of $C(G_{i})$ unmatched (the base of the blossom). Finally,
	\[
	M^{\P}=\left(M\ii E(B)\right)\u\U_{i=1}^{k}M_{i}\triangle E(P_{i})
	\]
	\noindent is a maximum matching of $G$ that leaves exactly one vertex unmatched in each cycle $C(G_{1}),\dots,C(G_{k})$. 
	
	By \cref{6}, there exists a critical independent set $I$ such that $I\s D(G)$. Since $I\ii Y=\emptyset$, we have $I\s X$. By \cref{ge}, $M^{\P}$ matches the vertices of $A(G)$ to vertices in $X$, hence $N(X)=A(G)$. We now show that $X$ is a critical independent set:
	\begin{eqnarray*}
		\left|X\right|-\left|N(X)\right| & = & \left|v\in X:M(v)=v\right|+\left|v\in X:M(v)\neq v\right|-\left|A\right|\\
		& = & \left|v\in X:M(v)=v\right|+\left|A\right|-\left|A\right|\\
		& = & \left|v\in X:M(v)=v\right|\\
		& \ge & \left|v\in I:M(v)=v\right|\\
		& \ge & \left(\left|v\in I:M(v)=v\right|+\left|v\in I:M(v)\neq v\right|\right)-\left|N(I)\right|\\
		& = & \left|I\right|-\left|N(I)\right|\\
		& = & d(G).
	\end{eqnarray*}
	To conclude, we show that $X\u U$ and $X\u W$ are maximum critical independent sets of $G$. Since $N(X)=A(G)$, we have
	\begin{align*}
		\left|X\u U\right|-\left|N(X\u U)\right| & =\left(\left|X\right|+\left|U\right|\right)-\left(\left|A\right|+\left|N(X\u U)-A(G)\right|\right)\\
		& =\left|X\right|+\left|U\right|-\left|A\right|-\left|W\right|\\
		& =\left|X\right|-\left|A\right|.
	\end{align*}
	The same reasoning applies to $X\u W$. By \cref{6}, a critical independent set cannot contain vertices from $A(G)$, nor vertices from $Y$. Therefore, $X\u U$ and $X\u W$ are maximum critical independent sets of $G$.
\end{proof}

From \cref{7}, explicit forms for $\textnormal{diadem}(G)$ and $\textnormal{nucleus}(G)$ can be obtained. 

\begin{theorem}
	\label{8} Let $G$ be a BAB-graph. Then
	\[
	\textnormal{diadem}(G)=X\u C(G),
	\]
	\noindent where $X$ is the set defined in \cref{7}.
\end{theorem}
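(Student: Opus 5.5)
We prove the two inclusions separately.

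For $\textnormal{diadem}(G)\s X\u C(G)$, take any critical independent set $I$. By \cref{6}, $I\ii A(G)=\emptyset$. By the first item of \cref{7}, $I\ii Y=\emptyset$. Since $V(G)=X\u Y\u A(G)\u C(G)$ is a partition, we get $I\s X\u C(G)$. Taking the union over all critical independent sets gives the inclusion. This direction is immediate.

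For the reverse inclusion $X\u C(G)\s\textnormal{diadem}(G)$, we use the last item of \cref{7}. For a bipartition $U,W$ of $G[C(G)]$, both $X\u U$ and $X\u W$ are (maximum) critical independent sets, hence contained in $\textnormal{diadem}(G)$. Their union is $X\u U\u W=X\u C(G)$.

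One point needs checking. \cref{7} is stated for an arbitrary bipartition of $G[C(G)]$, which presupposes that $G[C(G)]$ is bipartite. This holds because every odd cycle of a BAB-graph passes through some $Y$-component. By construction, the $G_i$ are almost bipartite with unique odd cycle $C(G_i)\s D(G)$. Edges leaving $V(G_i)$ only touch $A(G_i)$, and edges leaving $B$ only touch $A(B)\u C(B)$. So $C(G)\s C(B)\u\U_i(C(G)\ii V(G_i))$.

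We must make sure the union of parts of different graphs induces no odd cycle. Edges between different pieces have an endpoint in some $A(G_i)$. By \cref{1}, $A(G_i)\s A(G)$ (because $D(G)=D(B)\u\U D(G_i)$ and adjacency to $D(G_i)$ is preserved). Hence no edge of $G[C(G)]$ joins two different pieces. Within a piece, $G_i[C(G)\ii V(G_i)]$ avoids $C(G_i)\s D(G)$, so it is a forest-like bipartite graph. Also $B$ is bipartite. Thus $G[C(G)]$ is bipartite, and the bipartition $U,W$ exists. Since each side has a matching into the other (by \cref{ge}, $G[C(G)]$ has a perfect matching), $|U|=|W|$, which is consistent with the computation in \cref{7}.

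The main obstacle is only this verification that $G[C(G)]$ is bipartite so that \cref{7} applies. Everything else follows directly from \cref{6} and \cref{7}.
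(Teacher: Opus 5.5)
Your proof is correct and is essentially the paper's argument. The inclusion $X\cup C(G)\subseteq\textnormal{diadem}(G)$ comes from the critical independent sets $X\cup U$ and $X\cup W$ of \cref{7}, and the reverse inclusion from the fact that critical independent sets avoid $A(G)$ (\cref{6}) and $Y$ (first item of \cref{7}); your check that $G[C(G)]$ is bipartite makes explicit a point the paper leaves tacit.

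One caveat: your opening claim that every odd cycle of a BAB-graph meets $Y$ is false. Take $G_1,G_2,G_3$ each a triangle with a pendant path of length two; the middle vertex of each path lies in $A(G_i)$, and the definition lets you join these three middle vertices in a triangle, which avoids $D(G)\supseteq Y$. Your actual argument never uses this claim, though, and it can be shortened: $C(G)=C(B)$, because every vertex of $G_i$ lies on a flower and hence belongs to $D(G_i)\cup A(G_i)$, so bipartiteness is immediate.
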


\begin{proof}
	Let $U$, $W$ be a bipartition of $G[C(G)]$. By \cref{7}, the sets $X$, $X\u U$, $X\u W$ are critical independent sets of $G$, hence $X\u C(G)\s\textnormal{diadem}(G)$. On the other hand, by \cref{6}, the remaining vertices (vertices in $A(G)$ or in any nontrivial component of $G[D(G)]$) do not belong to any critical independent set.
\end{proof}

\begin{lemma}
	\label{9} \cite{levit2012critical} For a graph $G$, if $I$ and $J$ are critical in $G$, then $I\u J$ and $I\ii J$ are critical as well.
\end{lemma}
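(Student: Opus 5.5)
The plan is to prove the statement through a supermodularity property of the difference function $d_{G}(X)=\left|X\right|-\left|N(X)\right|$. Since $\left|X\right|$ is modular, it suffices to show that $X\mapsto\left|N(X)\right|$ is submodular, in the weak form
\[
\left|N(I\u J)\right|+\left|N(I\ii J)\right|\le\left|N(I)\right|+\left|N(J)\right|.
\]

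\textbf{Step 1: the neighbourhood inequality.} First, $N(I\u J)=N(I)\u N(J)$, because a vertex is adjacent to some vertex of $I\u J$ exactly when it is adjacent to a vertex of $I$ or to a vertex of $J$. Second, $N(I\ii J)\s N(I)\ii N(J)$, because a neighbour of a vertex of $I\ii J$ is a neighbour of both $I$ and $J$. Combining these with inclusion--exclusion gives
\[
\left|N(I\u J)\right|+\left|N(I\ii J)\right|\le\left|N(I)\u N(J)\right|+\left|N(I)\ii N(J)\right|=\left|N(I)\right|+\left|N(J)\right|.
\]

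\textbf{Step 2: supermodularity of $d_{G}$.} Since $\left|I\u J\right|+\left|I\ii J\right|=\left|I\right|+\left|J\right|$, Step 1 yields
\[
d_{G}(I\u J)+d_{G}(I\ii J)\ge d_{G}(I)+d_{G}(J)=2d(G).
\]

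\textbf{Step 3: conclusion.} By the definition of $d(G)$ as a maximum, both $d_{G}(I\u J)\le d(G)$ and $d_{G}(I\ii J)\le d(G)$. Together with the inequality of Step 2, this forces equality in both, so $I\u J$ and $I\ii J$ are critical.

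\textbf{Points to watch.} The degenerate case $I\ii J=\emptyset$ needs no special treatment: then $d_{G}(\emptyset)=0$, and the argument simply forces $d(G)=0$. The only potential subtlety is interpretive. If ``critical'' is read as ``critical independent'', then $I\ii J$ is still independent, but $I\u J$ need not be independent a priori. In that reading the proof above still shows that $I\u J$ attains $d(G)$, and since $d(G)=d_{I}(G)$ by \cite{zhang1990finding}, this is the intended notion of criticality. I would therefore state the lemma for critical sets, as it is written, where the argument is complete.
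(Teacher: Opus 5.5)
Your argument is correct. It is essentially the standard proof behind this lemma; the paper gives no proof of its own and only cites Levit and Mandrescu. That proof shows $X\mapsto|N(X)|$ is submodular via $N(I\cup J)=N(I)\cup N(J)$ and $N(I\cap J)\subseteq N(I)\cap N(J)$, deduces that $d_G$ is supermodular, and then squeezes both terms against $d(G)$.

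Your caveat about the ``critical independent'' reading is also justified. In $K_2$ with vertices $x,y$, the sets $\{x\}$ and $\{y\}$ are critical independent, but their union is critical without being independent. So the union statement holds only for critical sets, as the lemma is written.
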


\begin{theorem}
	\label{10} Let $G$ be a BAB-graph. Then
	\[
	\textnormal{nucleus}(G)=X,
	\]
	\noindent where $X$ is the set defined in \cref{7}.
\end{theorem}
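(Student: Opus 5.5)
We need $\textnormal{nucleus}(G)=X$, where nucleus is the intersection of all maximum critical independent sets. The plan is to prove the two inclusions separately, using \cref{7} for one direction and \cref{6} together with \cref{7} for the other.

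For the inclusion $\textnormal{nucleus}(G)\s X$: by \cref{7}, both $X\u U$ and $X\u W$ are maximum critical independent sets of $G$, where $U,W$ is a bipartition of $G[C(G)]$. Hence
\[
\textnormal{nucleus}(G)\s (X\u U)\ii(X\u W)=X\u(U\ii W)=X,
\]
since $U$ and $W$ are disjoint.

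For the reverse inclusion $X\s\textnormal{nucleus}(G)$: let $S$ be an arbitrary maximum critical independent set; we must show $X\s S$.
\begin{itemize}
\item By \cref{6} and \cref{7}, $S$ avoids $A(G)$ and $Y$, so $S\s X\u C(G)$.
\item By \cref{9}, $S\u X$ is critical. It need not be independent a priori, so instead we argue directly.
\item Since $N(X)=A(G)$ and $S\ii A(G)=\emptyset$, no vertex of $X$ is adjacent to $S\ii C(G)$. Also $X$ is itself independent, since its vertices are isolated in $G[D(G)]$.
\item Therefore $S\u X$ is independent.
\item $S\u X$ is critical: $S\ii X$ and $S\ii C(G)$ have neighbourhoods inside $A(G)$ and $A(G)\u C(G)$ respectively, but it is simpler to invoke \cref{9} with $I=S$ and $J=X$, both critical by hypothesis and \cref{7}.
\end{itemize}
So $S\u X$ is a critical independent set containing $S$. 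Maximality of $S$ then forces $S\u X=S$, that is, $X\s S$. As $S$ was arbitrary, $X\s\textnormal{nucleus}(G)$.

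The main obstacle is ensuring that $S\u X$ is independent, so that it competes with $S$ among critical independent sets. This requires ruling out edges between $X$ and $S\ii C(G)$. Such edges cannot exist because, by \cref{ge}, vertices of $D(G)$ have all their neighbours in $D(G)\u A(G)$ and none in $C(G)$; and $X$ has no neighbours in $Y$ by the definition of $X$ as the isolated vertices of $G[D(G)]$.

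If \cref{9} were read only for critical sets in the difference sense, not for critical independent sets, one can still compute directly. Using $d_G(S\u X)=d_G(S)+d_G(X)-d_G(S\ii X)$ for sets with disjoint-neighbourhood structure, or simply supermodularity of $d_G$, together with $d_G(S\ii X)\le d(G)$, yields $d_G(S\u X)\ge d(G)$.
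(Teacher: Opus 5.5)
Your proof is correct and follows the paper's argument. You get $\textnormal{nucleus}(G)\subseteq (X\cup U)\cap(X\cup W)=X$ from \cref{7}, and for the reverse inclusion you use \cref{9} to make $S\cup X$ a critical independent set containing any maximum critical independent set $S$; your explicit check that $S\cup X$ is independent (via $S\subseteq X\cup C(G)$ and $N(X)=A(G)$) fills in a step the paper leaves implicit.
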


\begin{proof}
	By \cref{7}, $\textnormal{nucleus}(G)\s\textnormal{diadem}(G)\s X\u C(G)$, but $\textnormal{nucleus}(G)\ii C(G)=\emptyset$, since $\textnormal{nucleus}(G)\s(X\u U)\ii(X\u W)=X$, where $U$, $W$ is a bipartition of $G[C(G)]$. Moreover, by \cref{9}, $I\u X$ is a critical independent set for every critical independent set $I$. That is, a maximum critical independent set of $G$ always contains $X$.
\end{proof} 

As a corollary of \cref{8} and \cref{10}, we have the following.

\begin{corollary}
	\label{11} Let $G$ be a BAB-graph. Then $\textnormal{nucleus}(G)\s\textnormal{diadem}(G)$.
\end{corollary}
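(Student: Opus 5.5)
The plan is to read the corollary directly off the explicit formulas already obtained, so no new structural argument is required. By \cref{10}, $\textnormal{nucleus}(G)=X$, where $X$ is the set of isolated vertices of $G[D(G)]$ as in \cref{7}. By \cref{8}, $\textnormal{diadem}(G)=X\u C(G)$. Since $X\s X\u C(G)$ trivially, the inclusion $\textnormal{nucleus}(G)\s\textnormal{diadem}(G)$ follows at once.

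As a sanity check that avoids the formulas, I would also note a definitional route. The set $\textnormal{nucleus}(G)$ is the intersection of all maximum critical independent sets, and $\textnormal{diadem}(G)$ is the union of all critical independent sets. So the inclusion holds as soon as at least one maximum critical independent set exists. This is automatic: $d(G)=d_I(G)$ is attained by some independent set (the empty set already gives difference $0$), so critical independent sets exist, and therefore so do maximum ones. Concretely, \cref{7} exhibits $X\u U$ as one such set, and it contains $X$ and lies inside $X\u C(G)$.

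There is essentially no obstacle here. The only point needing care is that \cref{8} and \cref{10} refer to the same set $X$, namely the one defined in \cref{7}, so the two formulas may be compared directly.
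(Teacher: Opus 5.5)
Your main argument is the paper's own: the corollary is read off from \cref{8} and \cref{10}, since $X\subseteq X\cup C(G)$. Your definitional remark is also correct and proves more: any maximum critical independent set $S_0$ satisfies $\textnormal{nucleus}(G)\subseteq S_0\subseteq\textnormal{diadem}(G)$, so the inclusion holds in every graph without using the BAB structure.
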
 

\begin{corollary}
	Let $G$ be a BAB-graph with $C(G)=\emptyset$. Then
	\[
	\textnormal{nucleus}(G)=\textnormal{diadem}(G).
	\]
\end{corollary}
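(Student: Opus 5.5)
The plan is to combine \cref{8} and \cref{10} directly. By \cref{10}, $\textnormal{nucleus}(G)=X$, where $X$ is the set of isolated vertices of $G[D(G)]$ defined in \cref{7}. By \cref{8}, $\textnormal{diadem}(G)=X\u C(G)$. When $C(G)=\emptyset$, the second formula becomes $\textnormal{diadem}(G)=X\u\emptyset=X$. Comparing the two expressions then gives $\textnormal{nucleus}(G)=X=\textnormal{diadem}(G)$, which is the claim.

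A few consistency points should be checked along the way, since \cref{7} and \cref{8} are phrased using a bipartition $U$, $W$ of $G[C(G)]$. When $C(G)=\emptyset$, the only bipartition is $U=W=\emptyset$. The sets $X\u U$ and $X\u W$ in \cref{7} then both collapse to $X$, so the statement that they are maximum critical independent sets just says that $X$ itself is a maximum critical independent set. The computation in the proof of \cref{7} goes through unchanged, because $|U|=|W|=0$. Likewise, in the proof of \cref{8} the inclusion $X\u C(G)\s\textnormal{diadem}(G)$ reduces to $X\s\textnormal{diadem}(G)$. The reverse inclusion, that no vertex of $A(G)$ or of a nontrivial component of $G[D(G)]$ lies in any critical independent set, comes from \cref{6} and \cref{7} and does not involve $C(G)$ at all.

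There is no real obstacle here. The only point needing care is that the degenerate bipartition is legitimate, so that the earlier theorems apply verbatim when $C(G)=\emptyset$, and that check is the one just described.
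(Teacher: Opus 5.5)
Your proof is correct and is exactly the paper's argument. The paper states this corollary as an immediate consequence of \cref{8} and \cref{10}: since $\textnormal{nucleus}(G)=X$ and $\textnormal{diadem}(G)=X\cup C(G)$, the two sets coincide once $C(G)=\emptyset$, and your check that the degenerate bipartition $U=W=\emptyset$ leaves \cref{7} and \cref{8} valid is a harmless extra verification.
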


\begin{corollary}
	Let $G=(\emptyset,G_{1},\dots,G_{k})$ be a BAB-graph. Then
	\[
	\textnormal{nucleus}(G)=\textnormal{diadem}(G).
	\]
\end{corollary}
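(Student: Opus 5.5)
The statement follows directly from \cref{8} and \cref{10}, so the plan is to compare the two explicit formulas. Write $G=(\emptyset,G_{1},\dots,G_{k})$. By \cref{8}, $\textnormal{diadem}(G)=X\u C(G)$, and by \cref{10}, $\textnormal{nucleus}(G)=X$, where $X$ is the set of isolated vertices of $G[D(G)]$. It therefore suffices to show that $C(G)=\emptyset$; the conclusion then also follows from the preceding corollary.

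To prove $C(G)=\emptyset$, I would argue from the structure of each $G_{i}$.

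\textbf{Step 1: each $G_{i}$ has empty $C$-part.} Fix $G_{i}$. By the standing assumption of the construction, every vertex of $G_{i}$ lies in some $M$-flower of $G_{i}$ for some maximum matching $M$ of $G_{i}$. Take such a flower with blossom $C(G_{i})$ and stem $P$.
\begin{itemize}
\item Suppose the vertex lies on the stem at even distance from the root. Flipping $M$ along the subpath of $P$ from the root to that vertex gives a maximum matching that misses it, so the vertex is in $D(G_{i})$.
\item Suppose it lies on the stem at odd distance from the root. Its predecessor on $P$ is in $D(G_{i})$ by the previous case, so the vertex is adjacent to $D(G_{i})$ and hence lies in $A(G_{i})\u D(G_{i})$.
\item Suppose it lies on the blossom. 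Flipping $M$ along the whole stem makes the base unmatched. Then alternating around the odd cycle shows that every cycle vertex can be left unmatched by some maximum matching, so the vertex is in $D(G_{i})$.
\end{itemize}
Hence $C(G_{i})=\emptyset$ in the Gallai--Edmonds sense.

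\textbf{Step 2: pass to $G$.} The next step is to show that $A(G)=\U_{i}A(G_{i})$. By construction and \cref{1}, $D(G)=\U_{i}D(G_{i})$. Every vertex of $A(G_{i})$ is adjacent to $D(G_{i})\s D(G)$ and is not in $D(G)$. Conversely, every vertex of $G_{i}$ not in $D(G_{i})$ is in $A(G_{i})$ by Step 1. Hence $V(G)=D(G)\u A(G)$, so $C(G)=\emptyset$.

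The main obstacle is the careful justification in Step 1 that the flower assumption forces $C(G_{i})=\emptyset$. The even/odd position analysis along the stem must be done uniformly over all flowers and matchings. As a fallback, one can instead invoke \cref{5}: the components of $G_{i}[D(G_{i})]$ are the cycle and isolated vertices, and each remaining vertex lies on a stem adjacent to some $D$-vertex.

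Once $C(G)=\emptyset$ is established, the result reads $\textnormal{nucleus}(G)=X=X\u C(G)=\textnormal{diadem}(G)$.
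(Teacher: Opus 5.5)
Your proposal is correct and follows the paper's route. Theorems~\ref{8} and~\ref{10} give $\textnormal{nucleus}(G)=X$ and $\textnormal{diadem}(G)=X\cup C(G)$, so the claim reduces to the preceding corollary via $C(G)=\emptyset$ (Gallai--Edmonds sense). The paper leaves implicit that $B=\emptyset$ forces $C(G)=\emptyset$, and your flower argument supplies this correctly: even stem positions and blossom vertices lie in $D(G_i)$, odd stem positions lie in $A(G_i)\cup D(G_i)$, and $D(G)=\bigcup_i D(G_i)$ gives $A(G_i)\subseteq A(G)$.
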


\begin{theorem}\label{ll2}
	Let $G$ be a BAB-graph. Then
	\[
	N[\textnormal{diadem}(G)]\u Y=V(G)\textnormal{ and }N[\textnormal{diadem}(G)]\ii Y=\emptyset,
	\]
	\noindent where $Y$ is the set defined in \cref{7}.
\end{theorem}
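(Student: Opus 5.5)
By \cref{8} we have $\textnormal{diadem}(G)=X\u C(G)$. The plan is to compute $N[X\u C(G)]$ directly and show that it equals $X\u C(G)\u A(G)$. Since the Gallai--Edmonds decomposition gives the partition $V(G)=D(G)\u A(G)\u C(G)$ and $D(G)=X\u Y$ with $X\ii Y=\emptyset$, this implies $V(G)=(X\u A(G)\u C(G))\u Y$ as a disjoint union, which yields both identities at once.

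The first step is $N[X]=X\u A(G)$. By \cref{7}, $N(X)=A(G)$, and $X$ is independent since its vertices are isolated in $G[D(G)]$. The second step is $N(C(G))\s A(G)\u C(G)$. By the definition of $A(G)$, every vertex adjacent to a vertex of $D(G)$ but not itself in $D(G)$ lies in $A(G)$. Hence a vertex $c\in C(G)$ has no neighbour in $D(G)$, for otherwise $c$ would belong to $A(G)$. Therefore $N[C(G)]\s A(G)\u C(G)$.

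Combining the two steps gives
\[
N[\textnormal{diadem}(G)]=N[X]\u N[C(G)]\s X\u A(G)\u C(G).
\]
For the reverse inclusion, $X\u C(G)\s N[\textnormal{diadem}(G)]$ trivially, and $A(G)=N(X)\s N[\textnormal{diadem}(G)]$. Hence $N[\textnormal{diadem}(G)]=X\u A(G)\u C(G)=V(G)-Y$.

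There is no real obstacle here. The only point needing care is the equality $N(X)=A(G)$, which is provided by \cref{7}; the rest is bookkeeping of the Gallai--Edmonds partition.
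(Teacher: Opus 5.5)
Your proof is correct and is the derivation the paper evidently intends. The paper states \cref{ll2} without a separate proof, as an immediate consequence of $\textnormal{diadem}(G)=X\cup C(G)$ (\cref{8}) and $N(X)=A(G)$ (\cref{7}), together with the Gallai--Edmonds partition $V(G)=X\cup Y\cup A(G)\cup C(G)$ and the fact that $C(G)$ has no neighbours in $D(G)$.

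Beyond \cref{8}, the only nontrivial input is $A(G)\subseteq N(X)$; the reverse inclusion follows from the definitions. This is where the BAB structure enters: there is a maximum matching leaving one vertex of each $C(G_i)$ exposed, and such a matching sends all of $A(G)$ into $X$.
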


\begin{corollary}
	Let $G$ be a BAB-graph. Then
	\[
	\textnormal{nucleus}(G),D(G)-\textnormal{nucleus}(G),A(G),C(G)
	\]
	\noindent form a partition of $V(G)$.
\end{corollary}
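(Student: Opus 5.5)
The corollary states that $\textnormal{nucleus}(G)$, $D(G)-\textnormal{nucleus}(G)$, $A(G)$, $C(G)$ form a partition of $V(G)$. The plan is to combine the Gallai--Edmonds partition with \cref{10} and \cref{5}.

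First, by \cref{ge}, the sets $D(G)$, $A(G)$, $C(G)$ are pairwise disjoint and their union is $V(G)$. This holds by the very definitions: $A(G)$ is disjoint from $D(G)$, and $C(G)$ is the complement of $D(G)\u A(G)$.

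Second, by \cref{10}, $\textnormal{nucleus}(G)=X$, where $X$ is the set of isolated vertices of $G[D(G)]$. In particular $\textnormal{nucleus}(G)\s D(G)$. It follows that $\textnormal{nucleus}(G)$ and $D(G)-\textnormal{nucleus}(G)=Y$ are disjoint and their union is $D(G)$. Substituting this splitting of $D(G)$ into the Gallai--Edmonds partition gives that the four sets are pairwise disjoint and cover $V(G)$.

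Finally, if "partition" is meant to require nonempty blocks, one should add the caveat that some blocks may be empty. For instance, $C(G)$ is empty when $B=\emptyset$, and $Y$ is empty when $k=0$. This can be handled with the usual convention of discarding empty blocks. The only step needing any care is checking that $X\s D(G)$, and that holds by definition of $X$ in \cref{7}. Alternatively, one can read the statement off \cref{ll2}: there, $N[\textnormal{diadem}(G)]=X\u A(G)\u C(G)$ is disjoint from $Y$ and together with $Y$ covers $V(G)$.
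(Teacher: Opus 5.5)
Your proof is correct. The paper gives no separate argument for this corollary. It places it as a consequence of \cref{ll2}, read together with \cref{8} and \cref{10}. That yields $N[\textnormal{diadem}(G)]=N[X\cup C(G)]=X\cup A(G)\cup C(G)$, using $N(X)=A(G)$ from \cref{7} and the fact that $N(C(G))\subseteq A(G)\cup C(G)$, and this set together with $Y=D(G)-\textnormal{nucleus}(G)$ partitions $V(G)$. That is exactly the alternative you sketch at the end.

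Your main route is more direct. It uses only $\textnormal{nucleus}(G)=X\subseteq D(G)$ from \cref{10} and the fact that $D(G),A(G),C(G)$ partition $V(G)$ by definition. It therefore does not depend on \cref{ll2}, for which the paper supplies no proof. It also shows that the \cref{ll2} route still needs the same Gallai--Edmonds disjointness to separate $X$, $A(G)$ and $C(G)$ from one another.

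Your remark about empty blocks is apt. For example, $K_3=(\emptyset,K_3)$ is a BAB-graph with $k=1$ in which $\textnormal{nucleus}(G)$, $A(G)$ and $C(G)$ are all empty.
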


By \cref{ll2}, $N[\textnormal{diadem}(G)]$ and $Y$ form a partition of $V(G)$. As a consequence, we have the following known result.

\begin{theorem}
	\cite{levit2024almost} If $G$ is an almost bipartite non-König--Egerváry graph with a unique odd cycle $C$, then
	\[
	V(C)\u N[\textnormal{diadem}(G)]=V(G).
	\]
\end{theorem}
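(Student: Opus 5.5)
The plan is to derive this directly from \cref{ll2} specialized to $k=1$, with $B=\emptyset$ allowed. First, I will observe that an almost bipartite non–König--Egerváry graph $G$ with unique odd cycle $C$ is a BAB-graph. By \cite{kevin2025RDG} such a graph is an $R$-disjoint graph with exactly one odd cycle, so \cref{3} writes it as $G=(B,G_{1})$ with $C(G_{1})=C$. Here $G_{1}=G[R(C)]$ and $B=G[B(G)]$, the vertices lying in no flower having been moved into $B$ as explained in the definition of BAB-graphs.

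Next, I will identify the set $Y$ of \cref{7}. By definition, $Y=D(G)-X$ is the union of the nontrivial components of $G[D(G)]$. \cref{5} tells us that the only nontrivial component is the cycle $C(G_{1})=C$, all other components being isolated vertices. (Equivalently, \cref{4} applied directly to $G$ says $C$ is a component of $G[D(G)]$ and the rest are isolated vertices.) Since $C$ has at least three vertices, it is not isolated, so $Y=V(C)$.

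Finally, substituting $Y=V(C)$ into the first identity of \cref{ll2}, $N[\textnormal{diadem}(G)]\u Y=V(G)$, gives exactly $V(C)\u N[\textnormal{diadem}(G)]=V(G)$. As a byproduct the union is disjoint, by the second identity of \cref{ll2}.

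The only step needing care is the first one, namely checking that $G$ genuinely fits the BAB framework. This means verifying that the boundary conditions $\partial_G(V(B))\s A(B)\u C(B)$ and $\partial_G(V(G_1))\s A(G_1)$ hold. I would obtain these from items 1–3 of \cref{2}, together with item 5 and \cref{1}, which ensure that $A$ and $C$ of the pieces agree with those of $G$ on the boundary. This is precisely the content of \cref{3}, so I will simply invoke it. I do not expect any other obstacle, since the statement is essentially the $k=1$ case of \cref{ll2}.
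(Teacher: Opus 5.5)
Your proposal is correct and matches the paper's argument: the paper derives this result as an immediate consequence of \cref{ll2}, since $N[\textnormal{diadem}(G)]$ and $Y$ partition $V(G)$, and in the single-cycle case $Y=V(C)$ by \cref{4} (equivalently \cref{5}). Your justification that $G$ is a BAB-graph via \cref{3}, and your check that $C$ is a nontrivial component of $G[D(G)]$, spell out details the paper leaves implicit.
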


\section{Determinantal Factorization in BAB-Graphs}\label{sss3}
In this section, we show that the determinant of the adjacency matrix of a BAB-graph can be factorized in terms of the determinants of the adjacency matrices of the graphs generating it. As a consequence, we confirm Conjecture 5.4 of \cite{kevin2025RDG}, which establishes the validity of this factorization for $R$-disjoint graphs. Finally, we explore some combinatorial implications derived from these results. To do so, we begin by introducing some preliminary results.

A spanning subgraph $S$ of $G$ is called a \emph{Sachs subgraph} of $G$ if every component of $S$ is either $K_2$ or a cycle. Let $\Sachs{G}$ denote the set of all Sachs subgraphs of $G$. If $G$ has a perfect matching, then for every $M \in \mathcal{M}(G)$ we have $M \in \Sachs{G}$. In particular, if $G$ has a perfect matching, then $|\Sachs{G}| \geq 1$ \cite{s3}.

The set of all bijective functions of a set $A$, called the permutations of $A$, is denoted by $S_{A}$. For more details about permutations, see \cite{rotman2012introduction}. Let $G$ be a graph with vertex set $V$. The elements of $S_{V}$ induce all the Sachs subgraphs of $G$ as follows: given a graph $G$, let $\sigma$ be a permutation of $V$. Then $\sigma(G)=(V,E_{\sigma}(G))$, where $E_{\sigma}(G)=\{uv\in E(G):\sigma(u)=v\}$, is a Sachs subgraph of $G$. A function $f:S_{V}\to\R$ is said to be $G$-Sachs-stable if $f(\sigma)=f(\tau)$ whenever $\sigma(G)=\tau(G)$, i.e., the permutations of $V$ induce the same Sachs subgraph of $G$. For example, the sign of a permutation is $G$-Sachs-stable for any graph $G$. A $G$-Sachs-stable function induces a function on the Sachs subgraphs of a graph. If $f:S_{V}\to\R$ is $G$-Sachs-stable and $S\in\text{Sachs}(G)$, then $f(S):=f(\sigma)$ for any permutation $\sigma$ such that $\sigma(G)=S$.

If $G$ is a graph with vertices $[n]:=\{1,\dots,n\}$ and $f:S_{[n]}\to\R$ is a $G$-Sachs-stable function, the $f$-Schur function of $G$ is defined by
\[
\text{schur}_{f}(G):=\underset{\sigma\in S_{[n]}}{\sm}f(\sigma)\prod\limits_{i=1}^{n}a_{i\sigma(i)}=\underset{S\in\text{Sachs}(G)}{\sm}f(S)2^{\left|C(S)\right|},
\]
\noindent where $A(G)=[a_{ij}]_{1\le i,j\le n}$ is the adjacency matrix of $G$, and $|C(S)|$ is the number of cycles of $S$, see \cite{xxminc1984permanents,aaas}.  

\begin{theorem}
	[\cite{h2}\label{harary}] Let $G$ be a graph. Then
	\[
	\textnormal{det}(G)=\underset{S\in\textnormal{Sachs}(G)}{\sm} (-1)^k 2^{\left|C(S)\right|},
	\]
	\noindent where $k$ is the number of even cycles of $S$.
\end{theorem}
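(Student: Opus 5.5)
This is the classical Harary–Sachs expansion, and the plan is to expand the determinant of the adjacency matrix $A(G)=[a_{ij}]$ over permutations,
\[
\det(G)=\sum_{\sigma\in S_{[n]}}\operatorname{sgn}(\sigma)\prod_{i=1}^{n}a_{i\sigma(i)},
\]
and then regroup the nonzero terms according to the Sachs subgraph they induce. A product $\prod_i a_{i\sigma(i)}$ is nonzero (and then equals $1$) exactly when $i\sigma(i)\in E(G)$ for every $i$. Since $G$ is simple, $\sigma$ has no fixed points. Decompose such a $\sigma$ into disjoint cycles:
\begin{itemize}
\item each transposition $(u\,v)$ corresponds to an edge $uv$, i.e.\ a $K_2$ component;
\item each cyclic factor of length $\ell\ge 3$ traces a cycle of length $\ell$ in $G$.
\end{itemize}
Hence $\sigma(G)$ is a spanning subgraph whose components are $K_2$'s and cycles, that is, an element of $\Sachs{G}$.

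The key step is to count the permutations inducing a fixed $S\in\Sachs{G}$. A $K_2$ component admits exactly one orientation, the transposition. Each cycle component of length $\ge 3$ admits exactly two cyclic orientations, and these are distinct permutations. Choices on different components are independent, so precisely $2^{|C(S)|}$ permutations induce $S$. The one point needing care is that a $2$-cycle of the permutation must be read as a $K_2$ component and not as a cycle of $S$; since $G$ has no multiple edges, there is no ambiguity.

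It remains to compute the sign, which is $G$-Sachs-stable as noted earlier, so $\operatorname{sgn}(S)$ is well defined. A cyclic factor of length $\ell$ contributes $(-1)^{\ell-1}$. Thus every $K_2$ contributes $-1$, every even cycle contributes $-1$, and every odd cycle contributes $+1$. Therefore
\[
\operatorname{sgn}(S)=(-1)^{e(S)+k},
\]
where $e(S)$ is the number of $K_2$ components and $k$ the number of even cycles. Summing over $\Sachs{G}$ yields the $\text{schur}_{f}$ formula with $f=\operatorname{sgn}$.

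To match the stated form with exponent exactly $k$, I would pin down the sign convention: the factor $(-1)^{e(S)}$ must either be absorbed into the paper's convention or combined into the usual $(-1)^{n-\text{(number of components of }S)}$. Concretely, the standard Harary formula reads $\sum_S(-1)^{e(S)+k}2^{|C(S)|}$. The main obstacle is therefore not the counting but reconciling the sign with the statement as written. I would verify it on $K_2$, where $\det=-1$ but the formula with exponent $k$ gives $+1$, and state explicitly that $k$ must count the even components, i.e.\ the $K_2$'s together with the even cycles.
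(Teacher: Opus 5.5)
Your argument is correct and is the standard Harary--Sachs proof. The paper gives no proof beyond citing Harary, and its identity $\text{schur}_{f}(G)=\sum_{S}f(S)2^{|C(S)|}$ with $f=\operatorname{sgn}$ is exactly your grouping of permutations by the Sachs subgraph they induce.

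Your sign objection is also right. The paper separates $K_2$ components from cycles, as it must for the weight $2^{|C(S)|}$ to be correct. Read literally, the statement therefore already fails for $K_2$, and for $C_6$ it gives $0$ instead of $-4$, so the exponent must be the number of even components, $e(S)+k$. This correction does not affect the later factorization theorem, which only uses that the sign and $2^{|C(S)|}$ are multiplicative over components.
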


We denote by $\det(G)$ the determinant $\det(A(G))$ and define $\det(G[\emptyset])=1$.

\begin{theorem}
	\label{12} \cite{tutte19531} A graph $G$ has a Sachs subgraph if and only if
	\[
	i(G-S)\leq\left|S\right|,
	\]
	\noindent for every $S\s V(G)$.
\end{theorem}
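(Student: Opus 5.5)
The plan is to reduce the existence of a Sachs subgraph to a perfect matching problem in a bipartite graph, apply Hall's Theorem there, and then use the equality $d(G)=d_{I}(G)$ of \cite{zhang1990finding} to turn Hall's condition into the isolated-vertex condition of the statement.

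\textbf{Step 1: reduction to permutations.} Following the correspondence described before \cref{harary}, I will show that $G$ has a Sachs subgraph if and only if there is a permutation $\sigma\in S_{V}$ with $u\sigma(u)\in E(G)$ for every $u\in V$.

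Given such a $\sigma$, it has no fixed points, since $G$ has no loops. Each of its $2$-cycles $(u\,v)$ gives a $K_{2}$ component. Each cycle $(u_{1}\,u_{2}\dots u_{m})$ with $m\ge 3$ gives a cycle $u_{1}u_{2}\dots u_{m}u_{1}$ of $G$. Hence $\sigma(G)$ is a spanning subgraph whose components are $K_{2}$'s and cycles.

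Conversely, orient each cycle of a Sachs subgraph in one direction and swap the endpoints of each $K_{2}$. This yields such a permutation.

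\textbf{Step 2: Hall's condition.} Such permutations are exactly the perfect matchings of the bipartite double cover $H$. Its vertex classes are $V$ and a copy $V^{\P}$, and $u$ is joined to $v^{\P}$ whenever $uv\in E(G)$. In $H$ the neighbourhood of $X\s V$ is the copy of $N_{G}(X)$. By Hall's Theorem, $G$ has a Sachs subgraph if and only if $\left|N(X)\right|\ge\left|X\right|$ for every $X\s V(G)$. Equivalently, the condition is $d(G)\le 0$, where $d(G)$ is the critical difference.

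\textbf{Step 3: necessity.} Fix $S\s V(G)$ and let $I$ be the set of isolated vertices of $G-S$. Then $I$ is independent and $N(I)\s S$. Hall's condition therefore gives
\[
i(G-S)=\left|I\right|\le\left|N(I)\right|\le\left|S\right|.
\]

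\textbf{Step 4: sufficiency, the main point.} Suppose Hall's condition fails, so $d(G)>0$. A violating set $X$ could a priori intersect $N(X)$, and then setting $S=N(X)$ gives no useful count. This is handled by the theorem $d(G)=d_{I}(G)$ quoted in the introduction: it yields an independent set $X$ with $\left|X\right|-\left|N(X)\right|=d(G)>0$.

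Put $S=N(X)$. Since $X$ is independent, $X\ii S=\emptyset$. Every vertex of $X$ has all its neighbours in $S$, so it is isolated in $G-S$. Hence
\[
i(G-S)\ge\left|X\right|>\left|S\right|,
\]
which contradicts the hypothesis. So the condition of the statement implies Hall's condition, and by Step 2 a Sachs subgraph exists.

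The only genuinely nontrivial ingredient is the equality $d=d_{I}$. If one prefers to avoid it, the fallback is to take $X$ of maximum deficiency and minimal size, and show directly that $X\ii N(X)=\emptyset$ by deleting the vertices of $X\ii N(X)$.
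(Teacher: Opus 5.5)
Your proof is correct. The paper does not prove this statement; it only cites Tutte, so there is no internal proof to compare with.

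Your argument is the classical one, and every step holds:
\begin{enumerate}
\item Sachs subgraphs are exactly the spanning subgraphs with edge set $\{u\sigma(u):u\in V\}$ for permutations $\sigma$ with $u\sigma(u)\in E(G)$ for all $u$.
\item These permutations are the perfect matchings of the bipartite double cover, so Hall's Theorem makes their existence equivalent to $d(G)=0$.
\item The equality $d(G)=d_{I}(G)$ lets you take the Hall violator $X$ independent, and then $S=N(X)$ witnesses $i(G-S)>|S|$.
\end{enumerate}

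Compared with a bare citation, your route yields the intermediate equivalence $\textnormal{Sachs}(G)\neq\emptyset\iff d(G)=0$. This is precisely the form in which the result is used later, in \cref{13} and \cref{15}.

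Your fallback also works, with one correction: delete $X\cap N(X)$ all at once rather than vertex by vertex. Removing a single $v\in X\cap N(X)$ need not shrink $N(X)$, because $v$ may still have a neighbour in $X-\{v\}$. What makes the all-at-once deletion work is that $I=X-N(X)$ is independent and has no neighbour inside $X$. Hence $N(I)\subseteq N(X)-X$, and $|I|-|N(I)|\ge \bigl(|X|-|X\cap N(X)|\bigr)-\bigl(|N(X)|-|X\cap N(X)|\bigr)=|X|-|N(X)|$. No minimality assumption on $X$ is needed, and this is the standard proof of $d(G)=d_{I}(G)$.
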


\begin{lemma}
	\label{13} Let $G=(B,G_{1},\dots,G_{k})$ be a BAB-graph with $\textnormal{Sachs}(G)\neq\emptyset$. Then $\left|\textnormal{nucleus}(G)\right|=\left|A(G)\right|$.
\end{lemma}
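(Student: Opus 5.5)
By \cref{10}, $\textnormal{nucleus}(G)=X$, the set of isolated vertices of $G[D(G)]$. So the statement reduces to proving $|X|=|A(G)|$. In the proof of \cref{7} we saw that every maximum matching $M$ matches $A(G)$ into $X$. The same computation gives
\[
|X|-|A(G)|=\left|\{v\in X:M(v)=v\}\right|=d(G)\ge 0.
\]
Hence it suffices to show $d(G)\le 0$, i.e.\ that no set $I$ has $|I|>|N(I)|$, and this is where the hypothesis $\textnormal{Sachs}(G)\neq\emptyset$ enters.

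The plan is to apply \cref{12} with $S=A(G)$. Each vertex of $X$ is isolated in $G[D(G)]$. Since $N(X)=A(G)$ by \cref{7}, every vertex of $X$ is isolated in $G-A(G)$, so
\[
i(G-A(G))\ge |X|.
\]
\cref{12} gives $i(G-A(G))\le|A(G)|$, hence $|X|\le|A(G)|$. Combined with $|X|-|A(G)|=d(G)\ge0$ (which also follows from $d(G)\ge d_G(\emptyset)=0$), this yields $|X|=|A(G)|=|\textnormal{nucleus}(G)|$.

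The only delicate point is the identity $|X|-|A(G)|=\#\{v\in X \text{ unmatched by } M\}$. It requires that $A(G)$ is matched entirely into $X$, rather than into the cycles $C(G_i)$, by some maximum matching. This is exactly what the matching $M'$ constructed in the proof of \cref{7} provides, and I would reuse it. In fact, the inequality $|X|\le |A(G)|$ from Tutte's condition, together with $|A(G)|\le |X|$ from the injection of $A(G)$ into $X$ under $M'$, already suffices without mentioning $d(G)$.
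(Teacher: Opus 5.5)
Your proof is correct and is essentially the paper's argument. The paper applies Tutte's condition (\cref{12}) to $N(I)$ for an arbitrary critical independent set $I$ to get $d(G)=0$, and then uses \cref{7} (with \cref{10}) to write $d(G)=|X|-|A(G)|$; you instead specialize at once to $I=X$ and $S=A(G)=N(X)$. One small slip: it is not true that every maximum matching sends $A(G)$ into $X$, since a vertex of $A(G)$ may be matched to a vertex of some $C(G_i)$. As you note yourself, only the matching $M'$ from the proof of \cref{7} is needed, and the bound $d(G)\ge d_G(\emptyset)=0$ makes even that unnecessary.
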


\begin{proof}
	Let $I$ be a critical independent set of $G$. Since $G$ has a Sachs subgraph, by \cref{12} 
	\[
	\left|I\right|\le i(G-N(I))\le\left|N(I)\right|.
	\]
	\noindent That is, $d(G)=0$. On the other hand, by \cref{7}, $X$ is a critical independent set of $G$ and $N(\textnormal{nucleus}(G))=A(G)$, hence
	\[
	d(G)=0=\left|\textnormal{nucleus}(G)\right|-\left|A(G)\right|.
	\]
	\noindent As we wanted to show.
\end{proof}

The following theorem shows the structure of Sachs subgraphs in a BAB-graph. As a consequence, the Schur functions of $G$ admit a factorization in terms of the subgraphs that compose it.

\begin{theorem}
	\label{14} Let $G=(B,G_{1},\dots,G_{k})$ be a BAB-graph and $H\in\textnormal{Sachs}(G)$. If $H^{\P}$ is a connected component of $H$, then either $V(H^{\P})\s V(B)$ or $V(H^{\P})\s V(C(G_{i}))$ for some $i=1,\dots,k$.
\end{theorem}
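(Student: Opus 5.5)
The plan is to work with a permutation $\sigma$ of $V(G)$ inducing $H$, so that $H=\sigma(G)$ and every vertex $v$ satisfies $v\sigma(v)\in E(G)$. Each component of $H$ is then a single $\sigma$-orbit of length $2$ (a $K_2$) or of length at least $3$ (a cycle). We use the Gallai--Edmonds partition $V(G)=X\cup Y\cup A(G)\cup C(G)$ from \cref{ge} and \cref{7}. Here $X$ is the set of isolated vertices of $G[D(G)]$, and $Y=\bigcup_i V(C(G_i))$ by \cref{5}. The idea is to show that $\sigma$ cannot mix these classes except in a very rigid way.

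\textbf{Step 1 (the $X$--$A$ part).} Since $\textnormal{Sachs}(G)\neq\emptyset$, \cref{13} gives $|X|=|\textnormal{nucleus}(G)|=|A(G)|$, using $X=\textnormal{nucleus}(G)$ from \cref{10}. The set $X$ is independent and $N(X)=A(G)$ by \cref{7}. Hence $\sigma$ and $\sigma^{-1}$ both map $X$ injectively into $A(G)$, and equality of cardinalities forces
\[
\sigma(X)=A(G)=\sigma^{-1}(X).
\]
So every vertex of $A(G)$ has its image and its preimage in $X$. Consequently every $\sigma$-orbit meeting $X\cup A(G)$ alternates between $X$ and $A(G)$ and is contained in $X\cup A(G)$. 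These orbits give $K_2$'s or even cycles.

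\textbf{Step 2 (the rest).} It follows that $\sigma$ permutes $Y\cup C(G)$. There are no edges between $D(G)$ and $C(G)$, since neighbours of $D(G)$ outside $D(G)$ lie in $A(G)$. Therefore orbits inside $Y\cup C(G)$ lie either in $Y$ or in $C(G)$. The components of $G[Y]$ are exactly the cycles $C(G_i)$ (\cref{5}), so an orbit inside $Y$ lies in a single $V(C(G_i))$. Next, $C(G)\subseteq V(B)$. Indeed, by \cref{1}, $D(G)$ and (hence) $A(G)$ restrict to the corresponding sets of the pieces. The hypothesis $R(C(G_i))=V(G_i)$ makes every vertex of $G_i$ lie on a flower, and by \cref{4} the stem vertices alternate between $D(G_i)$ and $A(G_i)$, so $C(G_i)=\emptyset$. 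Thus orbits in $C(G)$ lie in $V(B)$.

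\textbf{Step 3 (locating $X$--$A$ orbits).} Every edge of such an orbit joins $x\in X$ to $a\in A(G)$. By the BAB conditions $\partial_G(V(G_i))\subseteq A(G_i)$ and $\partial_G(V(B))\subseteq A(B)\cup C(B)$, a vertex of $X$ (which lies in $D(G_i)$ or $D(B)$) has all its neighbours inside its own piece. Hence each such orbit lies entirely within $V(B)$ or within a single $V(G_i)$.

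\textbf{Main obstacle.} Orbits of Step 3 lying inside some $V(G_i)-V(C(G_i))$ do not fit the statement as written. I would first try to exclude them from the flower structure of $G_i$. In $G_i$ the stems hang off the unique odd cycle, and a Sachs subgraph of $G$ restricted to $X\cap V(G_i)$ and $A(G_i)$ would need a perfect matching of $G_i[X\cup A]\cap V(G_i)$. The attachment vertex on $C(G_i)$ belongs to $A(G_i)$, contradicting $C(G_i)\subseteq D(G)$. It is precisely this compatibility between \cref{4} and the requirement that all vertices be in flowers that must be checked carefully.

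If these orbits cannot be excluded outright, the fallback is to use the paper's normalization convention. Under that convention, vertices of $G_i$ outside $C(G_i)$ that pair within $X\cup A(G)$ are regarded as belonging to the bipartite part, since $G[X\cup A(G)]$ is bipartite with parts $X$ and $A(G)$. This yields the dichotomy $V(H')\subseteq V(B)$ or $V(H')\subseteq V(C(G_i))$.
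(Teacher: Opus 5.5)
\textbf{The gap.} The step that fails is your final one, and it cannot be repaired. With $V(C(G_i))$ in the conclusion the statement is false: the $X$--$A(G)$ components inside $V(G_i)\setminus V(C(G_i))$ that you try to exclude really occur. Let $G_1$ be the triangle $abc$ with the pendant path $c\,d\,e$, and let $G=(\emptyset,G_1)$ (or add any bipartite $B$ with no edges to $G_1$). For $M=\{ab,cd\}$, the blossom $abc$ with stem $cde$ is a flower covering $V(G_1)$. Also $\alpha(G_1)+\mu(G_1)=4<5$, so all hypotheses hold. Here $D(G)=\{a,b,c,e\}$, $A(G)=\{d\}$ and $X=\{e\}$. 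The triangle $abc$ plus the edge $de$ is a Sachs subgraph, and its component $\{d,e\}$ lies neither in $V(B)$ nor in $V(C(G_1))=\{a,b,c\}$.

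Your claimed contradiction does not arise. The attachment vertex $c$ lies on the cycle and is in $D(G)$; only its stem neighbor $d$ is in $A(G)$, and $G[\{d,e\}]$ has a perfect matching. The fallback is not legitimate either. The paper's normalization only moves vertices lying on no flower, and the reassignment you propose would depend on $H$. Moreover, after moving $d,e$ into $B$, the piece $G_1$ is a bare triangle with $A(G_1)=\emptyset$ while the edge $cd$ leaves it, violating $\partial_G(V(G_1))\subseteq A(G_1)$.

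\textbf{What is true.} The paper's proof actually establishes the version with $V(G_i)$ in place of $V(C(G_i))$, and that is all \cref{x14} needs. The paper shows that no edge of $H$ joins $A(G)$ to $A(G)\cup C(G)$; by the BAB boundary conditions these are the only edges between different pieces. For an even component it deletes $x$ and $y$, for an odd one only $x$, and \cref{12} applied to the remaining vertices of $A(G)$ gives $|\textnormal{nucleus}(G)|\le|A(G)|-1$, contradicting \cref{13}.

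Your Steps 1--3 prove exactly this corrected statement from the same two ingredients, $N(X)=A(G)$ and $|X|=|A(G)|$, but more directly. Since $\sigma$ and $\sigma^{-1}$ both inject $X$ into $A(G)$, you get $\sigma(A(G))=\sigma^{-1}(A(G))=X$ at once, with no even/odd case split. You also get a finer description of the components: each lies inside $V(B)$, inside a single $V(C(G_i))$, or is an $X$--$A(G)$ alternating component within one piece. So drop the final paragraph and state the conclusion as $V(H')\subseteq V(B)$ or $V(H')\subseteq V(G_i)$.

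Your Step 2 claim that the Gallai--Edmonds set $C(G)$ misses every $G_i$ is correct, since every flower vertex lies in $D\cup A$. It is not needed for the corrected version, because a component inside $C(G)$ uses no edge between pieces. Note also that there you write $C(G_i)$ for the Gallai--Edmonds set, which clashes with its use for the odd cycle.
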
 

\begin{proof}
	Let $H\in\textnormal{Sachs}(G)$ and, by contradiction, suppose there exists an edge $xy=e\in E(H)$ such that $x\in A(G)$ and $y\in A(G)\u C(G)$. If $e$ is in an even component of $H$, then $G^{\p}=G-x-y$ has a Sachs subgraph. On the other hand,
	\[
	\left|\textnormal{nucleus}(G)\right|\le i(G^{\P}-\left(A(G)-\{x\}\right))\le\left|A(G)\right|-1.
	\]
	\noindent This is absurd by \cref{13}. If $e$ is in an odd component of $H$, then $G^{\P}=G-x$ has a Sachs subgraph, and similarly 
	\[
	\left|\textnormal{nucleus}(G)\right|\le i(G^{\P}-\left(A(G)-\{x\}\right))\le\left|A(G)\right|-1,
	\] 
	absurd.
\end{proof}

As a corollary of \cref{14} and \cref{harary}, we obtain the determinant factorization theorem for BAB-graphs (see \cref{Figura2}).

\begin{theorem}
	\label{x14} Let $G=(B,G_{1},\dots,G_{k})$ be a BAB-graph. Then
	\[
	\det(G)=\det(B)\prod_{i=1}^{k}\det G_{i}.
	\]
\end{theorem}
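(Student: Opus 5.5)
We use Harary's formula (\cref{harary}) together with the structural information in \cref{14}. Write $V_0=V(B)$ and $V_i=V(G_i)$ for $i=1,\dots,k$. The aim is a bijection
\[
\Phi:\textnormal{Sachs}(G)\to\textnormal{Sachs}(B)\times\prod_{i=1}^{k}\textnormal{Sachs}(G_i),\qquad H\mapsto\bigl(H[V_0],H[V_1],\dots,H[V_k]\bigr),
\]
that preserves the number of cycles and the number of even cycles. Given such a $\Phi$, the sum in \cref{harary} for $G$ becomes a product of the corresponding sums for $B,G_1,\dots,G_k$, because both $|C(S)|$ and the parity count $k$ are additive over disjoint unions. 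This is exactly $\det(G)=\det(B)\prod_i\det(G_i)$.

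\emph{Case $\textnormal{Sachs}(G)=\emptyset$.} Then $\det(G)=0$ by \cref{harary}. If every one of $B,G_1,\dots,G_k$ had a Sachs subgraph, their disjoint union would be a spanning subgraph of $G$ whose components are $K_2$'s or cycles, i.e. a Sachs subgraph of $G$. This is impossible, so some factor has an empty Sachs set. By \cref{harary} its determinant is $0$, and both sides vanish. (The same union argument shows that $\Phi^{-1}$, namely the disjoint union, always lands in $\textnormal{Sachs}(G)$.)

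\emph{Case $\textnormal{Sachs}(G)\neq\emptyset$.} Here \cref{13} applies, and the argument in the proof of \cref{14} shows that no edge of a Sachs subgraph $H$ joins a vertex $x\in A(G)$ to a vertex $y\in A(G)\cup C(G)$. The key step, and the one I expect to be the main obstacle, is to show that every edge of $G$ between two different pieces $V_j\neq V_l$ is of this forbidden type.

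\begin{itemize}
\item By the definition of a BAB-graph, such an edge has its endpoint in $V_0$ lying in $A(B)\cup C(B)$, and any endpoint in $V_i$ ($i\ge1$) lying in $A(G_i)$.
\item Since $D(G)=D(B)\cup\bigcup_i D(G_i)$ by \cref{1}, none of these endpoints lies in $D(G)$. Hence both endpoints lie in $A(G)\cup C(G)$.
\item A vertex of $A(G_i)$ is adjacent to some vertex of $D(G_i)\subseteq D(G)$, so it lies in $A(G)$. Every cross edge therefore has at least one endpoint in $A(G)$, because it always involves some $G_i$ with $i\ge 1$ (edges with both ends in $B$ are not cross edges).
\item Consequently no cross edge lies in any Sachs subgraph $H$.
\end{itemize}

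It follows that every component of $H$ lies inside a single $V_j$, and each $H[V_j]$ is a Sachs subgraph of the corresponding piece. This settles the one point where the literal statement of \cref{14}, which speaks of $V(C(G_i))$, needs care: only containment in $V(G_i)$ is used here.

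Finally, $\Phi$ is injective, since $H$ is the disjoint union of its restrictions. It is surjective, since any tuple of Sachs subgraphs of the pieces unions to a Sachs subgraph of $G$. Cycles of $H$ are exactly the cycles of the restrictions, with their parities unchanged. Substituting $\Phi$ into \cref{harary} and expanding the product of sums finishes the proof, with the convention $\det(G[\emptyset])=1$ covering the case $B=\emptyset$.
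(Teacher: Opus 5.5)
Your proposal is correct and follows essentially the paper's route: the paper obtains the factorization as an immediate corollary of \cref{14} and Harary's formula (\cref{harary}), that is, by showing that Sachs subgraphs of $G$ decompose along the pieces $B,G_1,\dots,G_k$. You also fill in what the paper leaves implicit: the case $\textnormal{Sachs}(G)=\emptyset$, the check that every cross edge joins $A(G)$ to $A(G)\cup C(G)$ and is therefore excluded by the argument of \cref{14}, and the reading of $V(C(G_i))$ as $V(G_i)$ that this argument actually needs.
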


Hence, as a particular case for $R$-disjoint graphs, \cref{x14} verifies the following conjecture (Conjecture 5.4 in \cite{kevin2025RDG}).

\begin{conjecture}
	[\cite{kevin2025RDG}] Let $G$ be an $R$-disjoint graph. Then
	\[
	\det A(G)=\det A(G[B(G)])\prod_{C}\det A(G[R(C)]).
	\]
	
	\noindent Where the product is taken over all odd cycles $C$ of $G$.
\end{conjecture}

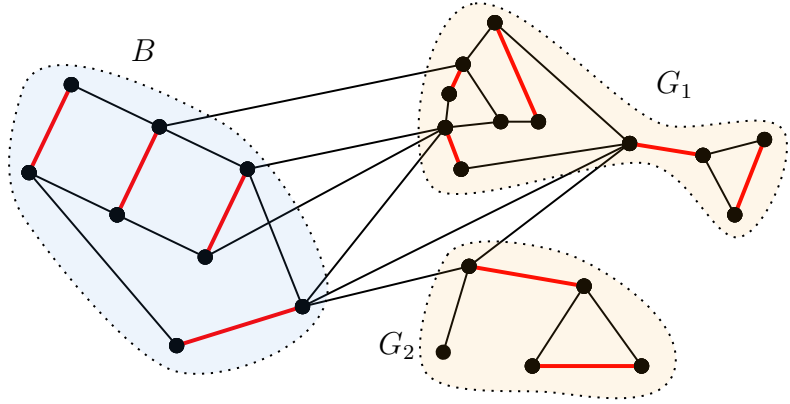
\begin{figure}[h!]
	
	\begin{center}

		\tikzset{every picture/.style={line width=0.75pt}} 
		
		\begin{tikzpicture}[x=0.75pt,y=0.75pt,yscale=-1,xscale=1]
			
			\draw    (133.64,134.35) -- (89.22,113.03) ;
			\draw [shift={(89.22,113.03)}, rotate = 205.64] [color={rgb, 255:red, 0; green, 0; blue, 0 }  ][fill={rgb, 255:red, 0; green, 0; blue, 0 }  ][line width=0.75]      (0, 0) circle [x radius= 3.35, y radius= 3.35]   ;
			\draw [shift={(133.64,134.35)}, rotate = 205.64] [color={rgb, 255:red, 0; green, 0; blue, 0 }  ][fill={rgb, 255:red, 0; green, 0; blue, 0 }  ][line width=0.75]      (0, 0) circle [x radius= 3.35, y radius= 3.35]   ;
			\draw    (89.22,113.03) -- (110.54,68.61) ;
			\draw [shift={(110.54,68.61)}, rotate = 295.64] [color={rgb, 255:red, 0; green, 0; blue, 0 }  ][fill={rgb, 255:red, 0; green, 0; blue, 0 }  ][line width=0.75]      (0, 0) circle [x radius= 3.35, y radius= 3.35]   ;
			\draw [shift={(89.22,113.03)}, rotate = 295.64] [color={rgb, 255:red, 0; green, 0; blue, 0 }  ][fill={rgb, 255:red, 0; green, 0; blue, 0 }  ][line width=0.75]      (0, 0) circle [x radius= 3.35, y radius= 3.35]   ;
			\draw    (133.64,134.35) -- (154.96,89.93) ;
			\draw [shift={(154.96,89.93)}, rotate = 295.64] [color={rgb, 255:red, 0; green, 0; blue, 0 }  ][fill={rgb, 255:red, 0; green, 0; blue, 0 }  ][line width=0.75]      (0, 0) circle [x radius= 3.35, y radius= 3.35]   ;
			\draw [shift={(133.64,134.35)}, rotate = 295.64] [color={rgb, 255:red, 0; green, 0; blue, 0 }  ][fill={rgb, 255:red, 0; green, 0; blue, 0 }  ][line width=0.75]      (0, 0) circle [x radius= 3.35, y radius= 3.35]   ;
			\draw    (178.05,155.67) -- (133.64,134.35) ;
			\draw [shift={(133.64,134.35)}, rotate = 205.64] [color={rgb, 255:red, 0; green, 0; blue, 0 }  ][fill={rgb, 255:red, 0; green, 0; blue, 0 }  ][line width=0.75]      (0, 0) circle [x radius= 3.35, y radius= 3.35]   ;
			\draw [shift={(178.05,155.67)}, rotate = 205.64] [color={rgb, 255:red, 0; green, 0; blue, 0 }  ][fill={rgb, 255:red, 0; green, 0; blue, 0 }  ][line width=0.75]      (0, 0) circle [x radius= 3.35, y radius= 3.35]   ;
			\draw    (199.37,111.25) -- (154.96,89.93) ;
			\draw [shift={(154.96,89.93)}, rotate = 205.64] [color={rgb, 255:red, 0; green, 0; blue, 0 }  ][fill={rgb, 255:red, 0; green, 0; blue, 0 }  ][line width=0.75]      (0, 0) circle [x radius= 3.35, y radius= 3.35]   ;
			\draw [shift={(199.37,111.25)}, rotate = 205.64] [color={rgb, 255:red, 0; green, 0; blue, 0 }  ][fill={rgb, 255:red, 0; green, 0; blue, 0 }  ][line width=0.75]      (0, 0) circle [x radius= 3.35, y radius= 3.35]   ;
			\draw    (154.96,89.93) -- (110.54,68.61) ;
			\draw [shift={(110.54,68.61)}, rotate = 205.64] [color={rgb, 255:red, 0; green, 0; blue, 0 }  ][fill={rgb, 255:red, 0; green, 0; blue, 0 }  ][line width=0.75]      (0, 0) circle [x radius= 3.35, y radius= 3.35]   ;
			\draw [shift={(154.96,89.93)}, rotate = 205.64] [color={rgb, 255:red, 0; green, 0; blue, 0 }  ][fill={rgb, 255:red, 0; green, 0; blue, 0 }  ][line width=0.75]      (0, 0) circle [x radius= 3.35, y radius= 3.35]   ;
			\draw    (178.05,155.67) -- (199.37,111.25) ;
			\draw [shift={(199.37,111.25)}, rotate = 295.64] [color={rgb, 255:red, 0; green, 0; blue, 0 }  ][fill={rgb, 255:red, 0; green, 0; blue, 0 }  ][line width=0.75]      (0, 0) circle [x radius= 3.35, y radius= 3.35]   ;
			\draw [shift={(178.05,155.67)}, rotate = 295.64] [color={rgb, 255:red, 0; green, 0; blue, 0 }  ][fill={rgb, 255:red, 0; green, 0; blue, 0 }  ][line width=0.75]      (0, 0) circle [x radius= 3.35, y radius= 3.35]   ;
			\draw    (227.13,180.67) -- (163.64,200.35) ;
			\draw [shift={(163.64,200.35)}, rotate = 162.78] [color={rgb, 255:red, 0; green, 0; blue, 0 }  ][fill={rgb, 255:red, 0; green, 0; blue, 0 }  ][line width=0.75]      (0, 0) circle [x radius= 3.35, y radius= 3.35]   ;
			\draw [shift={(227.13,180.67)}, rotate = 162.78] [color={rgb, 255:red, 0; green, 0; blue, 0 }  ][fill={rgb, 255:red, 0; green, 0; blue, 0 }  ][line width=0.75]      (0, 0) circle [x radius= 3.35, y radius= 3.35]   ;
			\draw    (163.64,200.35) -- (89.22,113.03) ;
			\draw [shift={(89.22,113.03)}, rotate = 229.56] [color={rgb, 255:red, 0; green, 0; blue, 0 }  ][fill={rgb, 255:red, 0; green, 0; blue, 0 }  ][line width=0.75]      (0, 0) circle [x radius= 3.35, y radius= 3.35]   ;
			\draw [shift={(163.64,200.35)}, rotate = 229.56] [color={rgb, 255:red, 0; green, 0; blue, 0 }  ][fill={rgb, 255:red, 0; green, 0; blue, 0 }  ][line width=0.75]      (0, 0) circle [x radius= 3.35, y radius= 3.35]   ;
			\draw    (199.37,111.25) -- (227.13,180.67) ;
			\draw [shift={(227.13,180.67)}, rotate = 68.2] [color={rgb, 255:red, 0; green, 0; blue, 0 }  ][fill={rgb, 255:red, 0; green, 0; blue, 0 }  ][line width=0.75]      (0, 0) circle [x radius= 3.35, y radius= 3.35]   ;
			\draw [shift={(199.37,111.25)}, rotate = 68.2] [color={rgb, 255:red, 0; green, 0; blue, 0 }  ][fill={rgb, 255:red, 0; green, 0; blue, 0 }  ][line width=0.75]      (0, 0) circle [x radius= 3.35, y radius= 3.35]   ;
			\draw    (343.05,210.67) -- (398.13,210.67) ;
			\draw [shift={(398.13,210.67)}, rotate = 0] [color={rgb, 255:red, 0; green, 0; blue, 0 }  ][fill={rgb, 255:red, 0; green, 0; blue, 0 }  ][line width=0.75]      (0, 0) circle [x radius= 3.35, y radius= 3.35]   ;
			\draw [shift={(343.05,210.67)}, rotate = 0] [color={rgb, 255:red, 0; green, 0; blue, 0 }  ][fill={rgb, 255:red, 0; green, 0; blue, 0 }  ][line width=0.75]      (0, 0) circle [x radius= 3.35, y radius= 3.35]   ;
			\draw    (343.05,210.67) -- (369.13,170.33) ;
			\draw [shift={(369.13,170.33)}, rotate = 302.89] [color={rgb, 255:red, 0; green, 0; blue, 0 }  ][fill={rgb, 255:red, 0; green, 0; blue, 0 }  ][line width=0.75]      (0, 0) circle [x radius= 3.35, y radius= 3.35]   ;
			\draw [shift={(343.05,210.67)}, rotate = 302.89] [color={rgb, 255:red, 0; green, 0; blue, 0 }  ][fill={rgb, 255:red, 0; green, 0; blue, 0 }  ][line width=0.75]      (0, 0) circle [x radius= 3.35, y radius= 3.35]   ;
			\draw    (369.13,170.33) -- (398.13,210.67) ;
			\draw [shift={(398.13,210.67)}, rotate = 54.28] [color={rgb, 255:red, 0; green, 0; blue, 0 }  ][fill={rgb, 255:red, 0; green, 0; blue, 0 }  ][line width=0.75]      (0, 0) circle [x radius= 3.35, y radius= 3.35]   ;
			\draw [shift={(369.13,170.33)}, rotate = 54.28] [color={rgb, 255:red, 0; green, 0; blue, 0 }  ][fill={rgb, 255:red, 0; green, 0; blue, 0 }  ][line width=0.75]      (0, 0) circle [x radius= 3.35, y radius= 3.35]   ;
			\draw    (311.13,160.33) -- (369.13,170.33) ;
			\draw [shift={(369.13,170.33)}, rotate = 9.78] [color={rgb, 255:red, 0; green, 0; blue, 0 }  ][fill={rgb, 255:red, 0; green, 0; blue, 0 }  ][line width=0.75]      (0, 0) circle [x radius= 3.35, y radius= 3.35]   ;
			\draw [shift={(311.13,160.33)}, rotate = 9.78] [color={rgb, 255:red, 0; green, 0; blue, 0 }  ][fill={rgb, 255:red, 0; green, 0; blue, 0 }  ][line width=0.75]      (0, 0) circle [x radius= 3.35, y radius= 3.35]   ;
			\draw    (311.13,160.33) -- (298.13,203.67) ;
			\draw [shift={(298.13,203.67)}, rotate = 106.7] [color={rgb, 255:red, 0; green, 0; blue, 0 }  ][fill={rgb, 255:red, 0; green, 0; blue, 0 }  ][line width=0.75]      (0, 0) circle [x radius= 3.35, y radius= 3.35]   ;
			\draw [shift={(311.13,160.33)}, rotate = 106.7] [color={rgb, 255:red, 0; green, 0; blue, 0 }  ][fill={rgb, 255:red, 0; green, 0; blue, 0 }  ][line width=0.75]      (0, 0) circle [x radius= 3.35, y radius= 3.35]   ;
			\draw [color={rgb, 255:red, 255; green, 0; blue, 0 }  ,draw opacity=1 ][line width=1.5]    (398.13,210.67) -- (343.05,210.67) ;
			\draw [color={rgb, 255:red, 255; green, 0; blue, 0 }  ,draw opacity=1 ][line width=1.5]    (311.13,160.33) -- (369.13,170.33) ;
			\draw    (445.13,134.33) -- (460.13,96.33) ;
			\draw [shift={(460.13,96.33)}, rotate = 291.54] [color={rgb, 255:red, 0; green, 0; blue, 0 }  ][fill={rgb, 255:red, 0; green, 0; blue, 0 }  ][line width=0.75]      (0, 0) circle [x radius= 3.35, y radius= 3.35]   ;
			\draw [shift={(445.13,134.33)}, rotate = 291.54] [color={rgb, 255:red, 0; green, 0; blue, 0 }  ][fill={rgb, 255:red, 0; green, 0; blue, 0 }  ][line width=0.75]      (0, 0) circle [x radius= 3.35, y radius= 3.35]   ;
			\draw    (445.13,134.33) -- (429.13,104.33) ;
			\draw [shift={(429.13,104.33)}, rotate = 241.93] [color={rgb, 255:red, 0; green, 0; blue, 0 }  ][fill={rgb, 255:red, 0; green, 0; blue, 0 }  ][line width=0.75]      (0, 0) circle [x radius= 3.35, y radius= 3.35]   ;
			\draw [shift={(445.13,134.33)}, rotate = 241.93] [color={rgb, 255:red, 0; green, 0; blue, 0 }  ][fill={rgb, 255:red, 0; green, 0; blue, 0 }  ][line width=0.75]      (0, 0) circle [x radius= 3.35, y radius= 3.35]   ;
			\draw    (429.13,104.33) -- (460.13,96.33) ;
			\draw [shift={(460.13,96.33)}, rotate = 345.53] [color={rgb, 255:red, 0; green, 0; blue, 0 }  ][fill={rgb, 255:red, 0; green, 0; blue, 0 }  ][line width=0.75]      (0, 0) circle [x radius= 3.35, y radius= 3.35]   ;
			\draw [shift={(429.13,104.33)}, rotate = 345.53] [color={rgb, 255:red, 0; green, 0; blue, 0 }  ][fill={rgb, 255:red, 0; green, 0; blue, 0 }  ][line width=0.75]      (0, 0) circle [x radius= 3.35, y radius= 3.35]   ;
			\draw    (392.13,98.33) -- (429.13,104.33) ;
			\draw [shift={(429.13,104.33)}, rotate = 9.21] [color={rgb, 255:red, 0; green, 0; blue, 0 }  ][fill={rgb, 255:red, 0; green, 0; blue, 0 }  ][line width=0.75]      (0, 0) circle [x radius= 3.35, y radius= 3.35]   ;
			\draw [shift={(392.13,98.33)}, rotate = 9.21] [color={rgb, 255:red, 0; green, 0; blue, 0 }  ][fill={rgb, 255:red, 0; green, 0; blue, 0 }  ][line width=0.75]      (0, 0) circle [x radius= 3.35, y radius= 3.35]   ;
			\draw    (392.13,98.33) -- (324.13,37.33) ;
			\draw [shift={(324.13,37.33)}, rotate = 221.89] [color={rgb, 255:red, 0; green, 0; blue, 0 }  ][fill={rgb, 255:red, 0; green, 0; blue, 0 }  ][line width=0.75]      (0, 0) circle [x radius= 3.35, y radius= 3.35]   ;
			\draw [shift={(392.13,98.33)}, rotate = 221.89] [color={rgb, 255:red, 0; green, 0; blue, 0 }  ][fill={rgb, 255:red, 0; green, 0; blue, 0 }  ][line width=0.75]      (0, 0) circle [x radius= 3.35, y radius= 3.35]   ;
			\draw    (392.13,98.33) -- (307.13,111.33) ;
			\draw [shift={(307.13,111.33)}, rotate = 171.3] [color={rgb, 255:red, 0; green, 0; blue, 0 }  ][fill={rgb, 255:red, 0; green, 0; blue, 0 }  ][line width=0.75]      (0, 0) circle [x radius= 3.35, y radius= 3.35]   ;
			\draw [shift={(392.13,98.33)}, rotate = 171.3] [color={rgb, 255:red, 0; green, 0; blue, 0 }  ][fill={rgb, 255:red, 0; green, 0; blue, 0 }  ][line width=0.75]      (0, 0) circle [x radius= 3.35, y radius= 3.35]   ;
			\draw    (299.13,90.33) -- (307.13,111.33) ;
			\draw [shift={(307.13,111.33)}, rotate = 69.15] [color={rgb, 255:red, 0; green, 0; blue, 0 }  ][fill={rgb, 255:red, 0; green, 0; blue, 0 }  ][line width=0.75]      (0, 0) circle [x radius= 3.35, y radius= 3.35]   ;
			\draw [shift={(299.13,90.33)}, rotate = 69.15] [color={rgb, 255:red, 0; green, 0; blue, 0 }  ][fill={rgb, 255:red, 0; green, 0; blue, 0 }  ][line width=0.75]      (0, 0) circle [x radius= 3.35, y radius= 3.35]   ;
			\draw    (299.13,90.33) -- (301.13,73.33) ;
			\draw [shift={(301.13,73.33)}, rotate = 276.71] [color={rgb, 255:red, 0; green, 0; blue, 0 }  ][fill={rgb, 255:red, 0; green, 0; blue, 0 }  ][line width=0.75]      (0, 0) circle [x radius= 3.35, y radius= 3.35]   ;
			\draw [shift={(299.13,90.33)}, rotate = 276.71] [color={rgb, 255:red, 0; green, 0; blue, 0 }  ][fill={rgb, 255:red, 0; green, 0; blue, 0 }  ][line width=0.75]      (0, 0) circle [x radius= 3.35, y radius= 3.35]   ;
			\draw    (324.13,37.33) -- (308.13,58.33) ;
			\draw [shift={(308.13,58.33)}, rotate = 127.3] [color={rgb, 255:red, 0; green, 0; blue, 0 }  ][fill={rgb, 255:red, 0; green, 0; blue, 0 }  ][line width=0.75]      (0, 0) circle [x radius= 3.35, y radius= 3.35]   ;
			\draw [shift={(324.13,37.33)}, rotate = 127.3] [color={rgb, 255:red, 0; green, 0; blue, 0 }  ][fill={rgb, 255:red, 0; green, 0; blue, 0 }  ][line width=0.75]      (0, 0) circle [x radius= 3.35, y radius= 3.35]   ;
			\draw    (301.13,73.33) -- (308.13,58.33) ;
			\draw [shift={(308.13,58.33)}, rotate = 295.02] [color={rgb, 255:red, 0; green, 0; blue, 0 }  ][fill={rgb, 255:red, 0; green, 0; blue, 0 }  ][line width=0.75]      (0, 0) circle [x radius= 3.35, y radius= 3.35]   ;
			\draw [shift={(301.13,73.33)}, rotate = 295.02] [color={rgb, 255:red, 0; green, 0; blue, 0 }  ][fill={rgb, 255:red, 0; green, 0; blue, 0 }  ][line width=0.75]      (0, 0) circle [x radius= 3.35, y radius= 3.35]   ;
			\draw    (299.13,90.33) -- (327.13,87.33) ;
			\draw [shift={(327.13,87.33)}, rotate = 353.88] [color={rgb, 255:red, 0; green, 0; blue, 0 }  ][fill={rgb, 255:red, 0; green, 0; blue, 0 }  ][line width=0.75]      (0, 0) circle [x radius= 3.35, y radius= 3.35]   ;
			\draw [shift={(299.13,90.33)}, rotate = 353.88] [color={rgb, 255:red, 0; green, 0; blue, 0 }  ][fill={rgb, 255:red, 0; green, 0; blue, 0 }  ][line width=0.75]      (0, 0) circle [x radius= 3.35, y radius= 3.35]   ;
			\draw    (308.13,58.33) -- (327.13,87.33) ;
			\draw [shift={(327.13,87.33)}, rotate = 56.77] [color={rgb, 255:red, 0; green, 0; blue, 0 }  ][fill={rgb, 255:red, 0; green, 0; blue, 0 }  ][line width=0.75]      (0, 0) circle [x radius= 3.35, y radius= 3.35]   ;
			\draw [shift={(308.13,58.33)}, rotate = 56.77] [color={rgb, 255:red, 0; green, 0; blue, 0 }  ][fill={rgb, 255:red, 0; green, 0; blue, 0 }  ][line width=0.75]      (0, 0) circle [x radius= 3.35, y radius= 3.35]   ;
			\draw    (327.13,87.33) -- (346.13,87.33) ;
			\draw [shift={(346.13,87.33)}, rotate = 0] [color={rgb, 255:red, 0; green, 0; blue, 0 }  ][fill={rgb, 255:red, 0; green, 0; blue, 0 }  ][line width=0.75]      (0, 0) circle [x radius= 3.35, y radius= 3.35]   ;
			\draw [shift={(327.13,87.33)}, rotate = 0] [color={rgb, 255:red, 0; green, 0; blue, 0 }  ][fill={rgb, 255:red, 0; green, 0; blue, 0 }  ][line width=0.75]      (0, 0) circle [x radius= 3.35, y radius= 3.35]   ;
			\draw    (346.13,87.33) -- (324.13,37.33) ;
			\draw [shift={(324.13,37.33)}, rotate = 246.25] [color={rgb, 255:red, 0; green, 0; blue, 0 }  ][fill={rgb, 255:red, 0; green, 0; blue, 0 }  ][line width=0.75]      (0, 0) circle [x radius= 3.35, y radius= 3.35]   ;
			\draw [shift={(346.13,87.33)}, rotate = 246.25] [color={rgb, 255:red, 0; green, 0; blue, 0 }  ][fill={rgb, 255:red, 0; green, 0; blue, 0 }  ][line width=0.75]      (0, 0) circle [x radius= 3.35, y radius= 3.35]   ;
			\draw [color={rgb, 255:red, 255; green, 0; blue, 0 }  ,draw opacity=1 ][line width=1.5]    (163.64,200.35) -- (227.13,180.67) ;
			\draw [color={rgb, 255:red, 255; green, 0; blue, 0 }  ,draw opacity=1 ][line width=1.5]    (89.22,113.03) -- (110.54,68.61) ;
			\draw [color={rgb, 255:red, 255; green, 0; blue, 0 }  ,draw opacity=1 ][line width=1.5]    (133.64,134.35) -- (154.96,89.93) ;
			\draw [color={rgb, 255:red, 255; green, 0; blue, 0 }  ,draw opacity=1 ][line width=1.5]    (178.05,155.67) -- (199.37,111.25) ;
			\draw    (311.13,160.33) -- (227.13,180.67) ;
			\draw [shift={(227.13,180.67)}, rotate = 166.39] [color={rgb, 255:red, 0; green, 0; blue, 0 }  ][fill={rgb, 255:red, 0; green, 0; blue, 0 }  ][line width=0.75]      (0, 0) circle [x radius= 3.35, y radius= 3.35]   ;
			\draw [shift={(311.13,160.33)}, rotate = 166.39] [color={rgb, 255:red, 0; green, 0; blue, 0 }  ][fill={rgb, 255:red, 0; green, 0; blue, 0 }  ][line width=0.75]      (0, 0) circle [x radius= 3.35, y radius= 3.35]   ;
			\draw    (392.13,98.33) -- (311.13,160.33) ;
			\draw [shift={(311.13,160.33)}, rotate = 142.57] [color={rgb, 255:red, 0; green, 0; blue, 0 }  ][fill={rgb, 255:red, 0; green, 0; blue, 0 }  ][line width=0.75]      (0, 0) circle [x radius= 3.35, y radius= 3.35]   ;
			\draw [shift={(392.13,98.33)}, rotate = 142.57] [color={rgb, 255:red, 0; green, 0; blue, 0 }  ][fill={rgb, 255:red, 0; green, 0; blue, 0 }  ][line width=0.75]      (0, 0) circle [x radius= 3.35, y radius= 3.35]   ;
			\draw [color={rgb, 255:red, 255; green, 0; blue, 0 }  ,draw opacity=1 ][line width=1.5]    (429.13,104.33) -- (392.13,98.33) ;
			\draw [color={rgb, 255:red, 255; green, 0; blue, 0 }  ,draw opacity=1 ][line width=1.5]    (460.13,96.33) -- (445.13,134.33) ;
			\draw [color={rgb, 255:red, 255; green, 0; blue, 0 }  ,draw opacity=1 ][line width=1.5]    (307.13,111.33) -- (299.13,90.33) ;
			\draw [color={rgb, 255:red, 255; green, 0; blue, 0 }  ,draw opacity=1 ][line width=1.5]    (308.13,58.33) -- (301.13,73.33) ;
			\draw [color={rgb, 255:red, 255; green, 0; blue, 0 }  ,draw opacity=1 ][line width=1.5]    (346.13,87.33) -- (324.13,37.33) ;
			\draw    (392.13,98.33) -- (227.13,180.67) ;
			\draw [shift={(227.13,180.67)}, rotate = 153.48] [color={rgb, 255:red, 0; green, 0; blue, 0 }  ][fill={rgb, 255:red, 0; green, 0; blue, 0 }  ][line width=0.75]      (0, 0) circle [x radius= 3.35, y radius= 3.35]   ;
			\draw [shift={(392.13,98.33)}, rotate = 153.48] [color={rgb, 255:red, 0; green, 0; blue, 0 }  ][fill={rgb, 255:red, 0; green, 0; blue, 0 }  ][line width=0.75]      (0, 0) circle [x radius= 3.35, y radius= 3.35]   ;
			\draw    (299.13,90.33) -- (227.13,180.67) ;
			\draw [shift={(227.13,180.67)}, rotate = 128.56] [color={rgb, 255:red, 0; green, 0; blue, 0 }  ][fill={rgb, 255:red, 0; green, 0; blue, 0 }  ][line width=0.75]      (0, 0) circle [x radius= 3.35, y radius= 3.35]   ;
			\draw [shift={(299.13,90.33)}, rotate = 128.56] [color={rgb, 255:red, 0; green, 0; blue, 0 }  ][fill={rgb, 255:red, 0; green, 0; blue, 0 }  ][line width=0.75]      (0, 0) circle [x radius= 3.35, y radius= 3.35]   ;
			\draw    (299.13,90.33) -- (178.05,155.67) ;
			\draw [shift={(178.05,155.67)}, rotate = 151.65] [color={rgb, 255:red, 0; green, 0; blue, 0 }  ][fill={rgb, 255:red, 0; green, 0; blue, 0 }  ][line width=0.75]      (0, 0) circle [x radius= 3.35, y radius= 3.35]   ;
			\draw [shift={(299.13,90.33)}, rotate = 151.65] [color={rgb, 255:red, 0; green, 0; blue, 0 }  ][fill={rgb, 255:red, 0; green, 0; blue, 0 }  ][line width=0.75]      (0, 0) circle [x radius= 3.35, y radius= 3.35]   ;
			\draw    (299.13,90.33) -- (199.37,111.25) ;
			\draw [shift={(199.37,111.25)}, rotate = 168.16] [color={rgb, 255:red, 0; green, 0; blue, 0 }  ][fill={rgb, 255:red, 0; green, 0; blue, 0 }  ][line width=0.75]      (0, 0) circle [x radius= 3.35, y radius= 3.35]   ;
			\draw [shift={(299.13,90.33)}, rotate = 168.16] [color={rgb, 255:red, 0; green, 0; blue, 0 }  ][fill={rgb, 255:red, 0; green, 0; blue, 0 }  ][line width=0.75]      (0, 0) circle [x radius= 3.35, y radius= 3.35]   ;
			\draw    (308.13,58.33) -- (154.96,89.93) ;
			\draw [shift={(154.96,89.93)}, rotate = 168.35] [color={rgb, 255:red, 0; green, 0; blue, 0 }  ][fill={rgb, 255:red, 0; green, 0; blue, 0 }  ][line width=0.75]      (0, 0) circle [x radius= 3.35, y radius= 3.35]   ;
			\draw [shift={(308.13,58.33)}, rotate = 168.35] [color={rgb, 255:red, 0; green, 0; blue, 0 }  ][fill={rgb, 255:red, 0; green, 0; blue, 0 }  ][line width=0.75]      (0, 0) circle [x radius= 3.35, y radius= 3.35]   ;
			\draw    (110.54,68.61) ;
			\draw [shift={(110.54,68.61)}, rotate = 0] [color={rgb, 255:red, 0; green, 0; blue, 0 }  ][fill={rgb, 255:red, 0; green, 0; blue, 0 }  ][line width=0.75]      (0, 0) circle [x radius= 3.35, y radius= 3.35]   ;
			\draw [shift={(110.54,68.61)}, rotate = 0] [color={rgb, 255:red, 0; green, 0; blue, 0 }  ][fill={rgb, 255:red, 0; green, 0; blue, 0 }  ][line width=0.75]      (0, 0) circle [x radius= 3.35, y radius= 3.35]   ;
			\draw    (324.13,37.33) ;
			\draw [shift={(324.13,37.33)}, rotate = 0] [color={rgb, 255:red, 0; green, 0; blue, 0 }  ][fill={rgb, 255:red, 0; green, 0; blue, 0 }  ][line width=0.75]      (0, 0) circle [x radius= 3.35, y radius= 3.35]   ;
			\draw [shift={(324.13,37.33)}, rotate = 0] [color={rgb, 255:red, 0; green, 0; blue, 0 }  ][fill={rgb, 255:red, 0; green, 0; blue, 0 }  ][line width=0.75]      (0, 0) circle [x radius= 3.35, y radius= 3.35]   ;
			\draw    (301.13,73.33) ;
			\draw [shift={(301.13,73.33)}, rotate = 0] [color={rgb, 255:red, 0; green, 0; blue, 0 }  ][fill={rgb, 255:red, 0; green, 0; blue, 0 }  ][line width=0.75]      (0, 0) circle [x radius= 3.35, y radius= 3.35]   ;
			\draw [shift={(301.13,73.33)}, rotate = 0] [color={rgb, 255:red, 0; green, 0; blue, 0 }  ][fill={rgb, 255:red, 0; green, 0; blue, 0 }  ][line width=0.75]      (0, 0) circle [x radius= 3.35, y radius= 3.35]   ;
			\draw    (163.64,200.35) ;
			\draw [shift={(163.64,200.35)}, rotate = 0] [color={rgb, 255:red, 0; green, 0; blue, 0 }  ][fill={rgb, 255:red, 0; green, 0; blue, 0 }  ][line width=0.75]      (0, 0) circle [x radius= 3.35, y radius= 3.35]   ;
			\draw [shift={(163.64,200.35)}, rotate = 0] [color={rgb, 255:red, 0; green, 0; blue, 0 }  ][fill={rgb, 255:red, 0; green, 0; blue, 0 }  ][line width=0.75]      (0, 0) circle [x radius= 3.35, y radius= 3.35]   ;
			\draw    (133.64,134.35) ;
			\draw [shift={(133.64,134.35)}, rotate = 0] [color={rgb, 255:red, 0; green, 0; blue, 0 }  ][fill={rgb, 255:red, 0; green, 0; blue, 0 }  ][line width=0.75]      (0, 0) circle [x radius= 3.35, y radius= 3.35]   ;
			\draw [shift={(133.64,134.35)}, rotate = 0] [color={rgb, 255:red, 0; green, 0; blue, 0 }  ][fill={rgb, 255:red, 0; green, 0; blue, 0 }  ][line width=0.75]      (0, 0) circle [x radius= 3.35, y radius= 3.35]   ;
			\draw    (89.22,113.03) ;
			\draw [shift={(89.22,113.03)}, rotate = 0] [color={rgb, 255:red, 0; green, 0; blue, 0 }  ][fill={rgb, 255:red, 0; green, 0; blue, 0 }  ][line width=0.75]      (0, 0) circle [x radius= 3.35, y radius= 3.35]   ;
			\draw [shift={(89.22,113.03)}, rotate = 0] [color={rgb, 255:red, 0; green, 0; blue, 0 }  ][fill={rgb, 255:red, 0; green, 0; blue, 0 }  ][line width=0.75]      (0, 0) circle [x radius= 3.35, y radius= 3.35]   ;
			\draw    (307.13,111.33) ;
			\draw [shift={(307.13,111.33)}, rotate = 0] [color={rgb, 255:red, 0; green, 0; blue, 0 }  ][fill={rgb, 255:red, 0; green, 0; blue, 0 }  ][line width=0.75]      (0, 0) circle [x radius= 3.35, y radius= 3.35]   ;
			\draw [shift={(307.13,111.33)}, rotate = 0] [color={rgb, 255:red, 0; green, 0; blue, 0 }  ][fill={rgb, 255:red, 0; green, 0; blue, 0 }  ][line width=0.75]      (0, 0) circle [x radius= 3.35, y radius= 3.35]   ;
			\draw    (346.13,87.33) ;
			\draw [shift={(346.13,87.33)}, rotate = 0] [color={rgb, 255:red, 0; green, 0; blue, 0 }  ][fill={rgb, 255:red, 0; green, 0; blue, 0 }  ][line width=0.75]      (0, 0) circle [x radius= 3.35, y radius= 3.35]   ;
			\draw [shift={(346.13,87.33)}, rotate = 0] [color={rgb, 255:red, 0; green, 0; blue, 0 }  ][fill={rgb, 255:red, 0; green, 0; blue, 0 }  ][line width=0.75]      (0, 0) circle [x radius= 3.35, y radius= 3.35]   ;
			\draw    (460.13,96.33) ;
			\draw [shift={(460.13,96.33)}, rotate = 0] [color={rgb, 255:red, 0; green, 0; blue, 0 }  ][fill={rgb, 255:red, 0; green, 0; blue, 0 }  ][line width=0.75]      (0, 0) circle [x radius= 3.35, y radius= 3.35]   ;
			\draw [shift={(460.13,96.33)}, rotate = 0] [color={rgb, 255:red, 0; green, 0; blue, 0 }  ][fill={rgb, 255:red, 0; green, 0; blue, 0 }  ][line width=0.75]      (0, 0) circle [x radius= 3.35, y radius= 3.35]   ;
			\draw    (429.13,104.33) ;
			\draw [shift={(429.13,104.33)}, rotate = 0] [color={rgb, 255:red, 0; green, 0; blue, 0 }  ][fill={rgb, 255:red, 0; green, 0; blue, 0 }  ][line width=0.75]      (0, 0) circle [x radius= 3.35, y radius= 3.35]   ;
			\draw [shift={(429.13,104.33)}, rotate = 0] [color={rgb, 255:red, 0; green, 0; blue, 0 }  ][fill={rgb, 255:red, 0; green, 0; blue, 0 }  ][line width=0.75]      (0, 0) circle [x radius= 3.35, y radius= 3.35]   ;
			\draw    (445.13,134.33) ;
			\draw [shift={(445.13,134.33)}, rotate = 0] [color={rgb, 255:red, 0; green, 0; blue, 0 }  ][fill={rgb, 255:red, 0; green, 0; blue, 0 }  ][line width=0.75]      (0, 0) circle [x radius= 3.35, y radius= 3.35]   ;
			\draw [shift={(445.13,134.33)}, rotate = 0] [color={rgb, 255:red, 0; green, 0; blue, 0 }  ][fill={rgb, 255:red, 0; green, 0; blue, 0 }  ][line width=0.75]      (0, 0) circle [x radius= 3.35, y radius= 3.35]   ;
			\draw    (398.13,210.67) ;
			\draw [shift={(398.13,210.67)}, rotate = 0] [color={rgb, 255:red, 0; green, 0; blue, 0 }  ][fill={rgb, 255:red, 0; green, 0; blue, 0 }  ][line width=0.75]      (0, 0) circle [x radius= 3.35, y radius= 3.35]   ;
			\draw [shift={(398.13,210.67)}, rotate = 0] [color={rgb, 255:red, 0; green, 0; blue, 0 }  ][fill={rgb, 255:red, 0; green, 0; blue, 0 }  ][line width=0.75]      (0, 0) circle [x radius= 3.35, y radius= 3.35]   ;
			\draw    (343.05,210.67) ;
			\draw [shift={(343.05,210.67)}, rotate = 0] [color={rgb, 255:red, 0; green, 0; blue, 0 }  ][fill={rgb, 255:red, 0; green, 0; blue, 0 }  ][line width=0.75]      (0, 0) circle [x radius= 3.35, y radius= 3.35]   ;
			\draw [shift={(343.05,210.67)}, rotate = 0] [color={rgb, 255:red, 0; green, 0; blue, 0 }  ][fill={rgb, 255:red, 0; green, 0; blue, 0 }  ][line width=0.75]      (0, 0) circle [x radius= 3.35, y radius= 3.35]   ;
			\draw    (369.13,170.33) ;
			\draw [shift={(369.13,170.33)}, rotate = 0] [color={rgb, 255:red, 0; green, 0; blue, 0 }  ][fill={rgb, 255:red, 0; green, 0; blue, 0 }  ][line width=0.75]      (0, 0) circle [x radius= 3.35, y radius= 3.35]   ;
			\draw [shift={(369.13,170.33)}, rotate = 0] [color={rgb, 255:red, 0; green, 0; blue, 0 }  ][fill={rgb, 255:red, 0; green, 0; blue, 0 }  ][line width=0.75]      (0, 0) circle [x radius= 3.35, y radius= 3.35]   ;
			\draw  [fill={rgb, 255:red, 74; green, 144; blue, 226 }  ,fill opacity=0.1 ][dash pattern={on 0.84pt off 2.51pt}] (86.13,70.67) .. controls (96.13,39.67) and (168.13,77.67) .. (186.13,87.67) .. controls (204.13,97.67) and (231.13,132.67) .. (239.13,165.67) .. controls (247.13,198.67) and (185.13,221.67) .. (161.13,212.67) .. controls (137.13,203.67) and (97.08,151.59) .. (87.13,136.67) .. controls (77.19,121.75) and (76.13,101.67) .. (86.13,70.67) -- cycle ;
			\draw  [fill={rgb, 255:red, 245; green, 166; blue, 35 }  ,fill opacity=0.1 ][dash pattern={on 0.84pt off 2.51pt}] (300.13,154.67) .. controls (308.13,142.67) and (346.13,148.67) .. (362.13,154.67) .. controls (378.13,160.67) and (419.13,185.67) .. (415.13,210.67) .. controls (411.13,235.67) and (367.13,224.67) .. (349.13,223.67) .. controls (331.13,222.67) and (297.13,225.67) .. (289.13,213.67) .. controls (281.13,201.67) and (292.13,166.67) .. (300.13,154.67) -- cycle ;
			\draw  [fill={rgb, 255:red, 245; green, 166; blue, 35 }  ,fill opacity=0.1 ][dash pattern={on 0.84pt off 2.51pt}] (314.13,27.67) .. controls (342.13,24.67) and (390.13,87.67) .. (413.13,89.67) .. controls (436.13,91.67) and (468.13,78.67) .. (471.13,95.67) .. controls (474.13,112.67) and (457.13,147.67) .. (445.13,145.67) .. controls (433.13,143.67) and (422.13,113.67) .. (402.13,108.67) .. controls (382.13,103.67) and (316.13,129.67) .. (296.13,122.67) .. controls (276.13,115.67) and (286.13,30.67) .. (314.13,27.67) -- cycle ;
			
			\draw (139,44.4) node [anchor=north west][inner sep=0.75pt]    {$B$};
			\draw (404,60.4) node [anchor=north west][inner sep=0.75pt]    {$G_{1}$};
			\draw (264,192.4) node [anchor=north west][inner sep=0.75pt]    {$G_{2}$};

		\end{tikzpicture}

	\end{center}
	\caption{Example of \cref{x14}: in the Figure, a BAB-graph $G=(B,G_{1},G_{2})$ is shown, where
		$
		\det(G)=\det(B)\det(G_{1})\det(G_{2})=4\cdot8\cdot(-2)=-64.
		$
	}
	\label{Figura2}
\end{figure}
\begin{theorem}
	\label{15} A graph $G$ has a Sachs subgraph if and only if $\textnormal{ker}(G)=\emptyset$.
\end{theorem}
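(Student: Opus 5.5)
The plan is to route everything through the critical difference $d(G)$. I will show two equivalences: $G$ has a Sachs subgraph if and only if $d(G)=0$, and $d(G)=0$ if and only if $\textnormal{ker}(G)=\emptyset$. Throughout, note that $d(G)\ge d_G(\emptyset)=0$ for every graph, and that $d(G)=d_I(G)$ by \cite{zhang1990finding}. So $d(G)=0$ means exactly that $|I|\le|N(I)|$ for every independent set $I$.

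\emph{Step 1: Sachs subgraph $\Leftrightarrow d(G)=0$.} I would use Tutte's criterion, \cref{12}.

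Suppose $G$ has a Sachs subgraph, and let $I$ be any independent set. Every vertex of $I$ is isolated in $G-N(I)$. Hence $|I|\le i(G-N(I))\le|N(I)|$, so $d_G(I)\le 0$ and therefore $d(G)=0$. This is the same argument already used in the proof of \cref{13}.

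Conversely, suppose $G$ has no Sachs subgraph. Then \cref{12} gives a set $S$ with $i(G-S)>|S|$. Let $I$ be the set of isolated vertices of $G-S$. Then $I$ is independent and $N(I)\subseteq S$, so $d_G(I)\ge |I|-|S|>0$, and hence $d(G)>0$.

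\emph{Step 2: $d(G)=0\Leftrightarrow \textnormal{ker}(G)=\emptyset$.}

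If $d(G)=0$, then $\emptyset$ is a critical independent set. Since $\textnormal{ker}(G)$ is the intersection of all critical independent sets, it is empty.

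If $d(G)>0$, I apply \cref{9} repeatedly to the finitely many critical independent sets. This shows that their intersection $\textnormal{ker}(G)$ is itself critical, that is, $d_G(\textnormal{ker}(G))=d(G)>0$. It is also independent, being a subset of an independent set. A set with positive difference cannot be empty, because $d_G(\emptyset)=0$. Hence $\textnormal{ker}(G)\neq\emptyset$.

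Combining the two steps proves the theorem. The only step that is more than bookkeeping is the nonemptiness of the kernel when $d(G)>0$, which depends on closure of critical sets under intersection (\cref{9}). One subtlety needs checking there: \cref{9} is stated for critical sets, whereas $\textnormal{ker}(G)$ is defined via critical independent sets. Since intersections of independent sets are independent, and a critical set in the sense of \cref{9} has difference $d(G)$, the intersection is a critical independent set, which is what the argument needs.
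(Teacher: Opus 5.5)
Your proof is correct and matches the paper's argument step for step: Tutte's criterion (\cref{12}) shows that a Sachs subgraph exists exactly when $d(G)=0$, and closure of critical sets under intersection (\cref{9}) makes $\ker(G)$ itself critical, so $\ker(G)=\emptyset$ forces $d(G)=0$. Your version also states the two key inequalities correctly, whereas the paper's proof has typos: it writes $N(I)\subseteq I$ where $N(I)\subseteq S$ is meant, and $i(G-S)-|S|$ where $i(G-N(S))-|N(S)|$ is meant.
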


\begin{proof}
	Suppose that $G$ has no Sachs subgraph. Then, by \cref{12}, there exists a set $S\s V(G)$ such that $i(G-S)>\left|S\right|$.
	Let $I$ be the set of isolated vertices of $G-S$. Then $N(I)\s I$, and therefore
	\[
	d(G)\ge\left|I\right|-\left|N(I)\right|\ge i(G-S)-\left|S\right|>0.
	\]
	\noindent If $\ker(G)=\emptyset$, by \cref{9} we have $d(G)=\left|\emptyset\right|-\left|N(\emptyset)\right|=0$, a contradiction.
	
	Conversely, suppose that $G$ has a Sachs subgraph. Let $S\s V(G)$ be an independent set. Then
	\[
	\left|S\right|-\left|N(S)\right|\le i(G-S)-\left|S\right|\le0.
	\]
	\noindent That is, $\emptyset$ is a critical independent set, hence $\textnormal{ker}(G)=\emptyset$.
\end{proof}

As a corollary of \cref{x14} and \cref{15} we have the following.

\begin{corollary}
	Let $G=(B,G_{1},\dots,G_{k})$ be a BAB-graph. If any of the graphs $B,G_{1},\dots,G_{k}$ is singular, then $\textnormal{ker}(G)\neq\emptyset.$
\end{corollary}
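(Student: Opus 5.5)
The plan is to combine the factorization of \cref{x14} with the Sachs-subgraph criterion of \cref{15}. By \cref{15}, $\ker(G)\neq\emptyset$ is equivalent to $\textnormal{Sachs}(G)=\emptyset$. So I will argue by contrapositive: assume $G$ has a Sachs subgraph $H$ and derive that every factor $B,G_{1},\dots,G_{k}$ is nonsingular. By \cref{x14}, $\det(G)=\det(B)\prod_{i}\det(G_{i})$, so it suffices to control each factor separately.

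\textbf{Step 1: localize $H$.} By \cref{14}, every component of $H$ has its vertex set inside $V(B)$ or inside some $V(C(G_{i}))$. Since $H$ is spanning, two consequences follow:
\begin{itemize}
\item for each $i$, the vertices of $G_{i}$ outside $C(G_{i})$ cannot be covered, so $V(G_{i})=V(C(G_{i}))$;
\item $H$ restricts to a Sachs subgraph of $B$ together with a Sachs subgraph of each $C(G_{i})$.
\end{itemize}

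\textbf{Step 2: the factors $G_{i}$.} By Step 1, each $G_{i}$ is exactly an odd cycle. An odd cycle has $\det=2\neq0$. Equivalently, by \cref{harary} its only Sachs subgraph is the cycle itself, contributing $2$. The same observation gives the converse: if some $G_{i}$ is not just its cycle, then \cref{14} shows directly that $\textnormal{Sachs}(G)=\emptyset$, and $\ker(G)\neq\emptyset$ follows. So a singular $G_{i}$ always yields the conclusion.

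\textbf{Step 3: the factor $B$ (main obstacle).} Here I must show that $\det(B)\neq0$ whenever $B$ has a Sachs subgraph. Since $B$ is bipartite, \cref{harary} gives $\det(B)=(-1)^{|V(B)|/2}\,\textnormal{pm}(B)^{2}$, where $\textnormal{pm}(B)$ is the signed perfect-matching count of the biadjacency matrix. Thus the task is to exclude cancellation in this signed sum. My first attempt would use the constraint that $\partial(V(B))\subseteq A(B)\cup C(B)$ together with \cref{13} and the Sachs structure. The aim is to force the Sachs subgraphs of $B$ into a form without sign cancellation, for instance one where all even cycles have the same parity class.

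I expect this step to be the genuine difficulty. It is exactly where positivity of the Sachs count fails to imply a nonzero determinant. For example, with $k=0$ and $B=C_{4}$, we have $\det(B)=0$, yet $\emptyset$ is critical, so $\ker(B)=\emptyset$. Hence Step 3 cannot succeed for bipartite $B$ in full generality, and as worded the statement is false in that case.

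\textbf{What the argument proves unconditionally.} The statement holds whenever the singular factor is some $G_{i}$, by Step 2. When the singular factor is $B$, it holds under an extra hypothesis ruling out signed cancellation, such as $B$ having a unique perfect matching. If that cannot be guaranteed, the plan is to record the $G_{i}$ case, together with the conditional $B$ case, as the correct content of the corollary.
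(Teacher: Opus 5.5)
You are right that the statement is false as worded, and your $C_4$ example shows why. The paper obtains it ``as a corollary of \cref{x14} and \cref{15}''. That derivation tacitly uses $\det(G)=0\Rightarrow\textnormal{Sachs}(G)=\emptyset$, and cancellation among Sachs subgraphs breaks this implication. To sidestep any doubt about $k=0$, take $B=C_4$ and $G_1=K_3$ with no connecting edges. Then $\det(G)=0\cdot 2=0$, but $G$ has a Sachs subgraph, so $\ker(G)=\emptyset$. What \cref{x14} and \cref{15} really give is the converse: if $B,G_1,\dots,G_k$ are all nonsingular, then $\det(G)\neq 0$, so $G$ has a Sachs subgraph and $\ker(G)=\emptyset$. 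Your ``conditional $B$ case'' is vacuous, because a bipartite graph with a unique perfect matching has determinant $\pm1$ and is never singular.

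The genuine error is in Steps~1--2, and it takes with it your claim that the corollary holds when the singular factor is some $G_i$. You applied \cref{14} literally, but as printed it is false. Its proof only excludes Sachs edges joining $A(G)$ to $A(G)\cup C(G)$. That rules out the connecting edges and confines each component to $V(B)$ or to some $V(G_i)$, which is what \cref{x14} and \cref{fe2} actually use. It does not confine components to $V(C(G_i))$.

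The $G_i$ case is in fact false. Let $G_1$ be a triangle $c_1c_2c_3$ together with a $4$-cycle $a_1x_1a_2x_2$ and the edge $a_1c_1$.
\begin{itemize}
\item $G_1$ has a unique odd cycle and $\alpha(G_1)=\mu(G_1)=3$ on $7$ vertices, so it is non-K\"onig--Egerv\'ary.
\item For $M=\{c_2c_3,c_1a_1,x_1a_2\}$, the flower with blossom $c_1c_2c_3$ and stem $c_1a_1x_1a_2x_2$ covers every vertex. Hence $(\emptyset,G_1)$ is a BAB-graph.
\item $G_1$ has the Sachs subgraph $\{c_1c_2c_3,a_1x_1,a_2x_2\}$, whose $K_2$-components lie outside $V(C(G_1))$. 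So $\ker(G_1)=\emptyset$ by \cref{15}, and Step~1's conclusion $V(G_1)=V(C(G_1))$ fails.
\item Yet $\det(G_1)=0$, because $x_1$ and $x_2$ have the same neighbourhood $\{a_1,a_2\}$.
\end{itemize}

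The correct repair replaces ``singular'' by ``has no Sachs subgraph'' (equivalently, has nonempty kernel). Since Sachs subgraphs of $G$ avoid the connecting edges, $\textnormal{Sachs}(G)\neq\emptyset$ if and only if each of $B,G_1,\dots,G_k$ has a Sachs subgraph. By \cref{15}, this gives $\ker(G)\neq\emptyset$ if and only if some factor has nonempty kernel, uniformly for $B$ and the $G_i$.
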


It is easy to see that in every BAB-graph $G=(B,G_{1},\dots,G_{k})$, we have $\textnormal{def}(G)\ge\textnormal{def}(B)+k$. The next result shows that equality holds when $G$ has a Sachs subgraph.

\begin{theorem}
	\label{fe2} Let $G=(B,G_{1},\dots,G_{k})$ be a BAB-graph with at least one Sachs subgraph. Then
	\[
	\textnormal{def}(G)=\textnormal{def}(B)+k.
	\]
\end{theorem}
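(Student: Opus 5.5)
The approach is to compute $\textnormal{def}(G)=|V(G)|-2\mu(G)$ directly from the Gallai--Edmonds decomposition. We have $D(G)=D(B)\u D(G_{1})\u\dotsm\u D(G_{k})$, and $\textnormal{def}(G)$ equals the number of components of $G[D(G)]$ minus $|A(G)|$. By \cref{5}, the components of $G[D(G)]$ are the $k$ cycles $C(G_{1}),\dots,C(G_{k})$ together with the isolated vertices $X$. Hence
\[
\textnormal{def}(G)=k+|X|-|A(G)|.
\]
By \cref{10}, $X=\textnormal{nucleus}(G)$. Since $G$ has a Sachs subgraph, \cref{13} gives $|X|=|A(G)|$, so $\textnormal{def}(G)=k$.

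It then remains to show $\textnormal{def}(B)=0$, i.e. that $B$ has a perfect matching. Since $G$ has a Sachs subgraph $H$, \cref{14} says every component of $H$ lies inside $V(B)$ or inside some $V(C(G_{i}))$. Therefore $H$ restricted to $V(B)$ is a Sachs subgraph of $B$. Because $B$ is bipartite, all its cycles are even. Each even cycle splits into two perfect matchings of its vertex set, so replacing each cycle by one of them yields a perfect matching of $B$. Thus $\textnormal{def}(B)=0$, and
\[
\textnormal{def}(G)=k=\textnormal{def}(B)+k.
\]

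As a by-product, the same restriction argument applied to each $C(G_{i})$ shows that the components of $H$ cover each odd cycle exactly. Any vertex of $G_{i}$ outside $C(G_{i})$ would lie in a component contained in neither $V(B)$ nor any $V(C(G_{j}))$, so \cref{14} forces $V(G_{i})=V(C(G_{i}))$. This is consistent with the count above, though it is not needed for the proof.

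The main point to verify carefully is the first formula, $\textnormal{def}(G)=c(G[D(G)])-|A(G)|$. It is the standard consequence of \cref{ge}: every maximum matching leaves exactly one vertex unmatched in each component of $G[D(G)]$ that receives no $A(G)$-partner, and each vertex of $A(G)$ is matched into a distinct component. The identification of those components through \cref{5} is where the BAB structure enters. Once \cref{13} and \cref{14} are available, everything else is bookkeeping.
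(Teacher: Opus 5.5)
Your main argument is correct, but it takes a different route from the paper.

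\textbf{How the two proofs differ.} The paper restricts the Sachs subgraph $H$ to each $G_i$. Since $G_i$ is almost bipartite, this restriction has at most one odd cycle. Since an almost bipartite non-K\"onig--Egerv\'ary graph has no perfect matching, it has exactly one, so $\textnormal{def}(G_i)=1$. The theorem then rests on the additivity $\textnormal{def}(G)=\textnormal{def}(B)+\sum_i\textnormal{def}(G_i)$. The paper leaves this step implicit; it is the remark in the proof of \cref{7} that maximum matchings of $G$ restrict to maximum matchings of the pieces.

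You instead obtain $\textnormal{def}(G)=k$ globally, from the Gallai--Edmonds count together with \cref{5} and \cref{13}. You then show $\textnormal{def}(B)=0$ by restricting $H$ to $B$. This avoids the additivity step entirely and gives the sharper fact that $B$ itself has a perfect matching. The paper's route is more local and yields $\textnormal{def}(G_i)=1$ for each $i$ directly. You could also drop \cref{10}: \cref{7} already says $X$ is critical with $N(X)=A(G)$, so $d(G)=0$ gives $|X|=|A(G)|$.

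\textbf{A correction.} You invoke \cref{14} in its literal form, where every component of $H$ lies inside $V(B)$ or inside some $V(C(G_i))$. That form is false. Take $B=\emptyset$ and let $G_1$ be the triangle $abc$ with the pendant path $a$--$d$--$e$. This graph is almost bipartite and non-K\"onig--Egerv\'ary, and every vertex lies on a flower (use the matching $\{ad,bc\}$ with stem $a$--$d$--$e$). Its Sachs subgraph $\{de\}\cup abc$ has the component $\{d,e\}$, which is not contained in $V(C(G_1))$.

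What the proof of \cref{14} actually establishes is weaker: no edge of $H$ joins $A(G)$ to $A(G)\cup C(G)$. By the BAB conditions and $D(G)=D(B)\cup\bigcup_i D(G_i)$, every edge of $G$ leaving $V(B)$ or some $V(G_i)$ is of that type. Hence each component of $H$ lies inside $V(B)$ or inside some $V(G_i)$. This weaker form is exactly what your restriction to $B$ needs, so your main proof stands. However, your by-product $V(G_i)=V(C(G_i))$ is wrong: the example above has a Sachs subgraph and $V(G_1)\neq V(C(G_1))$. You should delete that remark.
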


\begin{proof}
	If $G$ has a Sachs subgraph $H$, then by \cref{14}, $H_{1}=(V(H)\ii V(G_{1}),E(H)\ii E(G_{1}))$ is a Sachs subgraph of $G_{1}$. It suffices to show that $\textnormal{def}(G_{1})=1$.
	Let $M_{1}$ be a matching of $H_{1}$ obtained by choosing a perfect matching in each even component of $H_{1}$ and a near-perfect matching in each odd component. Note that since $G_{1}$ is an almost bipartite graph, $H_{1}$ has at most one odd component. On the other hand, if $H_{1}$ has no odd components, then $M_{1}$ is a perfect matching of $G_{1}$, a contradiction. Therefore $M_{1}$ leaves exactly one vertex unmatched in $H_{1}$, and hence $\textnormal{def}(G_{1})=1$. Repeat this argument for the other graphs $G_{2},\dots,G_{k}$ to complete the proof.
\end{proof}

Equivalently, from \cref{fe2} we can write
\[
\mu(G)=\frac{\left|G\right|-\left|B\right|-k}{2}+\mu(B).
\]

\begin{corollary}
	Let $G=(B,G_{1},\dots,G_{k})$ be a nonsingular BAB-graph. Then
	\[
	\textnormal{def}(G)=\textnormal{def}(B)+k.
	\]
\end{corollary}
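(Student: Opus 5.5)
The plan is to reduce the statement directly to \cref{fe2}, whose only hypothesis beyond being a BAB-graph is the existence of at least one Sachs subgraph. So everything comes down to showing that a nonsingular graph has a Sachs subgraph.

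For this I would use \cref{harary}. It expresses $\det(G)$ as a sum over $\textnormal{Sachs}(G)$ of the terms $(-1)^{k}2^{|C(S)|}$. If $\textnormal{Sachs}(G)$ were empty, this sum would be empty, so $\det(G)=0$, contradicting the assumption that $G$ is nonsingular. Hence $\det(G)\neq 0$ forces $\textnormal{Sachs}(G)\neq\emptyset$.

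With a Sachs subgraph in hand, \cref{fe2} applies verbatim and gives $\textnormal{def}(G)=\textnormal{def}(B)+k$. Alternatively, one could route through \cref{x14}: nonsingularity of $G$ gives $\det(B)\neq0$ and $\det(G_i)\neq0$ for every $i$. That is more than needed, however, so I would use only the Harary step.

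I do not expect a real obstacle here. The only point requiring care is that the empty-sum convention in \cref{harary} correctly yields $\det(G)=0$ when there are no Sachs subgraphs, which holds because every nonzero term in the Leibniz expansion corresponds to a permutation $\sigma$ with $\sigma(G)$ a Sachs subgraph.
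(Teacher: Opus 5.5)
Your proposal is correct and follows the paper's intended route: the corollary is stated right after \cref{fe2} with no separate proof, and the only link needed is that $\det(G)\neq0$ forces $\textnormal{Sachs}(G)\neq\emptyset$, which you get from \cref{harary}. Your justification of the empty-sum step is also sound: any permutation $\sigma$ with $\prod_{i} a_{i\sigma(i)}\neq 0$ has no fixed points, and its cycles trace $K_2$'s and cycles of $G$, so it gives a Sachs subgraph.
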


The converse of \cref{fe2} is not true, as $B(G)$ may have no Sachs subgraphs while the graphs $G_{1},\dots,G_{k}$ do. However, when some graph $G_{i}$ has no Sachs subgraph, the same reasoning in the proof of \cref{fe2} shows that $\textnormal{def}(G_{i})>1$.

\section{Independent Set Structure in BAB-Graphs}\label{sss4}

In an almost bipartite non-König--Egerváry graph or in an $R$-disjoint graph, we have $\textnormal{ker}(G)=\textnormal{core}(G)$. In an $R$-disjoint graph, it holds that $\left|\textnormal{corona}(G)\right|+\left|\textnormal{ker}(G)\right|=2\alpha(G)+k$. In a BAB-graph, these results do not hold; see, for example, \cref{Figura3}.

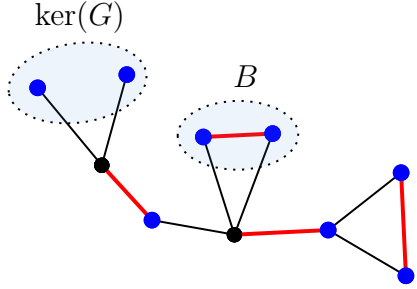
\begin{figure}[h!]
	
	\begin{center}

		\tikzset{every picture/.style={line width=0.75pt}} 
		
		\begin{tikzpicture}[x=0.75pt,y=0.75pt,yscale=-1,xscale=1]
			
			\draw    (198.2,137.55) -- (165.85,98.47) ;
			\draw [shift={(165.85,98.47)}, rotate = 230.38] [color={rgb, 255:red, 0; green, 0; blue, 0 }  ][fill={rgb, 255:red, 0; green, 0; blue, 0 }  ][line width=0.75]      (0, 0) circle [x radius= 3.35, y radius= 3.35]   ;
			\draw [shift={(198.2,137.55)}, rotate = 230.38] [color={rgb, 255:red, 0; green, 0; blue, 0 }  ][fill={rgb, 255:red, 0; green, 0; blue, 0 }  ][line width=0.75]      (0, 0) circle [x radius= 3.35, y radius= 3.35]   ;
			\draw    (210.83,91.8) -- (198.2,137.55) ;
			\draw [shift={(198.2,137.55)}, rotate = 105.43] [color={rgb, 255:red, 0; green, 0; blue, 0 }  ][fill={rgb, 255:red, 0; green, 0; blue, 0 }  ][line width=0.75]      (0, 0) circle [x radius= 3.35, y radius= 3.35]   ;
			\draw [shift={(210.83,91.8)}, rotate = 105.43] [color={rgb, 255:red, 0; green, 0; blue, 0 }  ][fill={rgb, 255:red, 0; green, 0; blue, 0 }  ][line width=0.75]      (0, 0) circle [x radius= 3.35, y radius= 3.35]   ;
			\draw    (223.53,165.25) -- (198.2,137.55) ;
			\draw [shift={(198.2,137.55)}, rotate = 227.55] [color={rgb, 255:red, 0; green, 0; blue, 0 }  ][fill={rgb, 255:red, 0; green, 0; blue, 0 }  ][line width=0.75]      (0, 0) circle [x radius= 3.35, y radius= 3.35]   ;
			\draw [shift={(223.53,165.25)}, rotate = 227.55] [color={rgb, 255:red, 0; green, 0; blue, 0 }  ][fill={rgb, 255:red, 0; green, 0; blue, 0 }  ][line width=0.75]      (0, 0) circle [x radius= 3.35, y radius= 3.35]   ;
			\draw    (265.1,172.58) -- (223.53,165.25) ;
			\draw [shift={(223.53,165.25)}, rotate = 190.01] [color={rgb, 255:red, 0; green, 0; blue, 0 }  ][fill={rgb, 255:red, 0; green, 0; blue, 0 }  ][line width=0.75]      (0, 0) circle [x radius= 3.35, y radius= 3.35]   ;
			\draw [shift={(265.1,172.58)}, rotate = 190.01] [color={rgb, 255:red, 0; green, 0; blue, 0 }  ][fill={rgb, 255:red, 0; green, 0; blue, 0 }  ][line width=0.75]      (0, 0) circle [x radius= 3.35, y radius= 3.35]   ;
			\draw    (312.39,170.25) -- (265.1,172.58) ;
			\draw [shift={(265.1,172.58)}, rotate = 177.17] [color={rgb, 255:red, 0; green, 0; blue, 0 }  ][fill={rgb, 255:red, 0; green, 0; blue, 0 }  ][line width=0.75]      (0, 0) circle [x radius= 3.35, y radius= 3.35]   ;
			\draw [shift={(312.39,170.25)}, rotate = 177.17] [color={rgb, 255:red, 0; green, 0; blue, 0 }  ][fill={rgb, 255:red, 0; green, 0; blue, 0 }  ][line width=0.75]      (0, 0) circle [x radius= 3.35, y radius= 3.35]   ;
			\draw    (349.17,141.31) -- (312.39,170.25) ;
			\draw [shift={(312.39,170.25)}, rotate = 141.8] [color={rgb, 255:red, 0; green, 0; blue, 0 }  ][fill={rgb, 255:red, 0; green, 0; blue, 0 }  ][line width=0.75]      (0, 0) circle [x radius= 3.35, y radius= 3.35]   ;
			\draw [shift={(349.17,141.31)}, rotate = 141.8] [color={rgb, 255:red, 0; green, 0; blue, 0 }  ][fill={rgb, 255:red, 0; green, 0; blue, 0 }  ][line width=0.75]      (0, 0) circle [x radius= 3.35, y radius= 3.35]   ;
			\draw    (351.6,193.35) -- (312.39,170.25) ;
			\draw [shift={(312.39,170.25)}, rotate = 210.51] [color={rgb, 255:red, 0; green, 0; blue, 0 }  ][fill={rgb, 255:red, 0; green, 0; blue, 0 }  ][line width=0.75]      (0, 0) circle [x radius= 3.35, y radius= 3.35]   ;
			\draw [shift={(351.6,193.35)}, rotate = 210.51] [color={rgb, 255:red, 0; green, 0; blue, 0 }  ][fill={rgb, 255:red, 0; green, 0; blue, 0 }  ][line width=0.75]      (0, 0) circle [x radius= 3.35, y radius= 3.35]   ;
			\draw    (349.17,141.31) -- (351.6,193.35) ;
			\draw [shift={(351.6,193.35)}, rotate = 87.33] [color={rgb, 255:red, 0; green, 0; blue, 0 }  ][fill={rgb, 255:red, 0; green, 0; blue, 0 }  ][line width=0.75]      (0, 0) circle [x radius= 3.35, y radius= 3.35]   ;
			\draw [shift={(349.17,141.31)}, rotate = 87.33] [color={rgb, 255:red, 0; green, 0; blue, 0 }  ][fill={rgb, 255:red, 0; green, 0; blue, 0 }  ][line width=0.75]      (0, 0) circle [x radius= 3.35, y radius= 3.35]   ;
			\draw    (249.4,123.28) -- (265.1,172.58) ;
			\draw [shift={(265.1,172.58)}, rotate = 72.34] [color={rgb, 255:red, 0; green, 0; blue, 0 }  ][fill={rgb, 255:red, 0; green, 0; blue, 0 }  ][line width=0.75]      (0, 0) circle [x radius= 3.35, y radius= 3.35]   ;
			\draw [shift={(249.4,123.28)}, rotate = 72.34] [color={rgb, 255:red, 0; green, 0; blue, 0 }  ][fill={rgb, 255:red, 0; green, 0; blue, 0 }  ][line width=0.75]      (0, 0) circle [x radius= 3.35, y radius= 3.35]   ;
			\draw    (284.36,121.56) -- (265.1,172.58) ;
			\draw [shift={(265.1,172.58)}, rotate = 110.68] [color={rgb, 255:red, 0; green, 0; blue, 0 }  ][fill={rgb, 255:red, 0; green, 0; blue, 0 }  ][line width=0.75]      (0, 0) circle [x radius= 3.35, y radius= 3.35]   ;
			\draw [shift={(284.36,121.56)}, rotate = 110.68] [color={rgb, 255:red, 0; green, 0; blue, 0 }  ][fill={rgb, 255:red, 0; green, 0; blue, 0 }  ][line width=0.75]      (0, 0) circle [x radius= 3.35, y radius= 3.35]   ;
			\draw    (284.36,121.56) -- (249.4,123.28) ;
			\draw [shift={(249.4,123.28)}, rotate = 177.17] [color={rgb, 255:red, 0; green, 0; blue, 0 }  ][fill={rgb, 255:red, 0; green, 0; blue, 0 }  ][line width=0.75]      (0, 0) circle [x radius= 3.35, y radius= 3.35]   ;
			\draw [shift={(284.36,121.56)}, rotate = 177.17] [color={rgb, 255:red, 0; green, 0; blue, 0 }  ][fill={rgb, 255:red, 0; green, 0; blue, 0 }  ][line width=0.75]      (0, 0) circle [x radius= 3.35, y radius= 3.35]   ;
			\draw [color={rgb, 255:red, 255; green, 0; blue, 0 }  ,draw opacity=1 ][line width=1.5]    (349.17,141.31) -- (351.6,193.35) ;
			\draw [color={rgb, 255:red, 255; green, 0; blue, 0 }  ,draw opacity=1 ][line width=1.5]    (312.39,170.25) -- (265.1,172.58) ;
			\draw [color={rgb, 255:red, 255; green, 0; blue, 0 }  ,draw opacity=1 ][line width=1.5]    (284.36,121.56) -- (249.4,123.28) ;
			\draw [color={rgb, 255:red, 255; green, 0; blue, 0 }  ,draw opacity=1 ][line width=1.5]    (223.53,165.25) -- (198.2,137.55) ;
			\draw    (198.2,137.55) ;
			\draw [shift={(198.2,137.55)}, rotate = 0] [color={rgb, 255:red, 0; green, 0; blue, 0 }  ][fill={rgb, 255:red, 0; green, 0; blue, 0 }  ][line width=0.75]      (0, 0) circle [x radius= 3.35, y radius= 3.35]   ;
			\draw [shift={(198.2,137.55)}, rotate = 0] [color={rgb, 255:red, 0; green, 0; blue, 0 }  ][fill={rgb, 255:red, 0; green, 0; blue, 0 }  ][line width=0.75]      (0, 0) circle [x radius= 3.35, y radius= 3.35]   ;
			\draw    (223.53,165.25) ;
			\draw [shift={(223.53,165.25)}, rotate = 0] [color={rgb, 255:red, 0; green, 0; blue, 0 }  ][fill={rgb, 255:red, 0; green, 0; blue, 0 }  ][line width=0.75]      (0, 0) circle [x radius= 3.35, y radius= 3.35]   ;
			\draw [shift={(223.53,165.25)}, rotate = 0] [color={rgb, 255:red, 0; green, 0; blue, 0 }  ][fill={rgb, 255:red, 0; green, 0; blue, 0 }  ][line width=0.75]      (0, 0) circle [x radius= 3.35, y radius= 3.35]   ;
			\draw    (349.17,141.31) ;
			\draw [shift={(349.17,141.31)}, rotate = 0] [color={rgb, 255:red, 0; green, 0; blue, 0 }  ][fill={rgb, 255:red, 0; green, 0; blue, 0 }  ][line width=0.75]      (0, 0) circle [x radius= 3.35, y radius= 3.35]   ;
			\draw [shift={(349.17,141.31)}, rotate = 0] [color={rgb, 255:red, 0; green, 0; blue, 0 }  ][fill={rgb, 255:red, 0; green, 0; blue, 0 }  ][line width=0.75]      (0, 0) circle [x radius= 3.35, y radius= 3.35]   ;
			\draw    (312.39,170.25) ;
			\draw [shift={(312.39,170.25)}, rotate = 0] [color={rgb, 255:red, 0; green, 0; blue, 0 }  ][fill={rgb, 255:red, 0; green, 0; blue, 0 }  ][line width=0.75]      (0, 0) circle [x radius= 3.35, y radius= 3.35]   ;
			\draw [shift={(312.39,170.25)}, rotate = 0] [color={rgb, 255:red, 0; green, 0; blue, 0 }  ][fill={rgb, 255:red, 0; green, 0; blue, 0 }  ][line width=0.75]      (0, 0) circle [x radius= 3.35, y radius= 3.35]   ;
			\draw    (284.36,121.56) ;
			\draw [shift={(284.36,121.56)}, rotate = 0] [color={rgb, 255:red, 0; green, 0; blue, 0 }  ][fill={rgb, 255:red, 0; green, 0; blue, 0 }  ][line width=0.75]      (0, 0) circle [x radius= 3.35, y radius= 3.35]   ;
			\draw [shift={(284.36,121.56)}, rotate = 0] [color={rgb, 255:red, 0; green, 0; blue, 0 }  ][fill={rgb, 255:red, 0; green, 0; blue, 0 }  ][line width=0.75]      (0, 0) circle [x radius= 3.35, y radius= 3.35]   ;
			\draw    (265.1,172.58) ;
			\draw [shift={(265.1,172.58)}, rotate = 0] [color={rgb, 255:red, 0; green, 0; blue, 0 }  ][fill={rgb, 255:red, 0; green, 0; blue, 0 }  ][line width=0.75]      (0, 0) circle [x radius= 3.35, y radius= 3.35]   ;
			\draw [shift={(265.1,172.58)}, rotate = 0] [color={rgb, 255:red, 0; green, 0; blue, 0 }  ][fill={rgb, 255:red, 0; green, 0; blue, 0 }  ][line width=0.75]      (0, 0) circle [x radius= 3.35, y radius= 3.35]   ;
			\draw    (249.4,123.28) ;
			\draw [shift={(249.4,123.28)}, rotate = 0] [color={rgb, 255:red, 0; green, 0; blue, 0 }  ][fill={rgb, 255:red, 0; green, 0; blue, 0 }  ][line width=0.75]      (0, 0) circle [x radius= 3.35, y radius= 3.35]   ;
			\draw [shift={(249.4,123.28)}, rotate = 0] [color={rgb, 255:red, 0; green, 0; blue, 0 }  ][fill={rgb, 255:red, 0; green, 0; blue, 0 }  ][line width=0.75]      (0, 0) circle [x radius= 3.35, y radius= 3.35]   ;
			\draw    (351.6,193.35) ;
			\draw [shift={(351.6,193.35)}, rotate = 0] [color={rgb, 255:red, 0; green, 0; blue, 0 }  ][fill={rgb, 255:red, 0; green, 0; blue, 0 }  ][line width=0.75]      (0, 0) circle [x radius= 3.35, y radius= 3.35]   ;
			\draw [shift={(351.6,193.35)}, rotate = 0] [color={rgb, 255:red, 0; green, 0; blue, 0 }  ][fill={rgb, 255:red, 0; green, 0; blue, 0 }  ][line width=0.75]      (0, 0) circle [x radius= 3.35, y radius= 3.35]   ;
			\draw  [fill={rgb, 255:red, 74; green, 144; blue, 226 }  ,fill opacity=0.1 ][dash pattern={on 0.84pt off 2.51pt}] (151.33,100.79) .. controls (149.82,89.85) and (164.11,78.84) .. (183.26,76.19) .. controls (202.41,73.54) and (219.16,80.27) .. (220.67,91.21) .. controls (222.18,102.15) and (207.89,113.16) .. (188.74,115.81) .. controls (169.59,118.46) and (152.84,111.73) .. (151.33,100.79) -- cycle ;
			\draw [color={rgb, 255:red, 0; green, 0; blue, 255 }  ,draw opacity=1 ]   (249.4,123.28) ;
			\draw [shift={(249.4,123.28)}, rotate = 0] [color={rgb, 255:red, 0; green, 0; blue, 255 }  ,draw opacity=1 ][fill={rgb, 255:red, 0; green, 0; blue, 255 }  ,fill opacity=1 ][line width=0.75]      (0, 0) circle [x radius= 3.69, y radius= 3.69]   ;
			\draw [shift={(249.4,123.28)}, rotate = 0] [color={rgb, 255:red, 0; green, 0; blue, 255 }  ,draw opacity=1 ][fill={rgb, 255:red, 0; green, 0; blue, 255 }  ,fill opacity=1 ][line width=0.75]      (0, 0) circle [x radius= 3.69, y radius= 3.69]   ;
			\draw [color={rgb, 255:red, 0; green, 0; blue, 255 }  ,draw opacity=1 ]   (165.85,98.47) ;
			\draw [shift={(165.85,98.47)}, rotate = 0] [color={rgb, 255:red, 0; green, 0; blue, 255 }  ,draw opacity=1 ][fill={rgb, 255:red, 0; green, 0; blue, 255 }  ,fill opacity=1 ][line width=0.75]      (0, 0) circle [x radius= 3.69, y radius= 3.69]   ;
			\draw [shift={(165.85,98.47)}, rotate = 0] [color={rgb, 255:red, 0; green, 0; blue, 255 }  ,draw opacity=1 ][fill={rgb, 255:red, 0; green, 0; blue, 255 }  ,fill opacity=1 ][line width=0.75]      (0, 0) circle [x radius= 3.69, y radius= 3.69]   ;
			\draw [color={rgb, 255:red, 0; green, 0; blue, 255 }  ,draw opacity=1 ]   (210.83,91.8) ;
			\draw [shift={(210.83,91.8)}, rotate = 0] [color={rgb, 255:red, 0; green, 0; blue, 255 }  ,draw opacity=1 ][fill={rgb, 255:red, 0; green, 0; blue, 255 }  ,fill opacity=1 ][line width=0.75]      (0, 0) circle [x radius= 3.69, y radius= 3.69]   ;
			\draw [shift={(210.83,91.8)}, rotate = 0] [color={rgb, 255:red, 0; green, 0; blue, 255 }  ,draw opacity=1 ][fill={rgb, 255:red, 0; green, 0; blue, 255 }  ,fill opacity=1 ][line width=0.75]      (0, 0) circle [x radius= 3.69, y radius= 3.69]   ;
			\draw [color={rgb, 255:red, 0; green, 0; blue, 255 }  ,draw opacity=1 ]   (223.53,165.25) ;
			\draw [shift={(223.53,165.25)}, rotate = 0] [color={rgb, 255:red, 0; green, 0; blue, 255 }  ,draw opacity=1 ][fill={rgb, 255:red, 0; green, 0; blue, 255 }  ,fill opacity=1 ][line width=0.75]      (0, 0) circle [x radius= 3.69, y radius= 3.69]   ;
			\draw [shift={(223.53,165.25)}, rotate = 0] [color={rgb, 255:red, 0; green, 0; blue, 255 }  ,draw opacity=1 ][fill={rgb, 255:red, 0; green, 0; blue, 255 }  ,fill opacity=1 ][line width=0.75]      (0, 0) circle [x radius= 3.69, y radius= 3.69]   ;
			\draw [color={rgb, 255:red, 0; green, 0; blue, 255 }  ,draw opacity=1 ]   (351.6,193.35) ;
			\draw [shift={(351.6,193.35)}, rotate = 0] [color={rgb, 255:red, 0; green, 0; blue, 255 }  ,draw opacity=1 ][fill={rgb, 255:red, 0; green, 0; blue, 255 }  ,fill opacity=1 ][line width=0.75]      (0, 0) circle [x radius= 3.69, y radius= 3.69]   ;
			\draw [shift={(351.6,193.35)}, rotate = 0] [color={rgb, 255:red, 0; green, 0; blue, 255 }  ,draw opacity=1 ][fill={rgb, 255:red, 0; green, 0; blue, 255 }  ,fill opacity=1 ][line width=0.75]      (0, 0) circle [x radius= 3.69, y radius= 3.69]   ;
			\draw [color={rgb, 255:red, 0; green, 0; blue, 255 }  ,draw opacity=1 ]   (312.39,170.25) ;
			\draw [shift={(312.39,170.25)}, rotate = 0] [color={rgb, 255:red, 0; green, 0; blue, 255 }  ,draw opacity=1 ][fill={rgb, 255:red, 0; green, 0; blue, 255 }  ,fill opacity=1 ][line width=0.75]      (0, 0) circle [x radius= 3.69, y radius= 3.69]   ;
			\draw [shift={(312.39,170.25)}, rotate = 0] [color={rgb, 255:red, 0; green, 0; blue, 255 }  ,draw opacity=1 ][fill={rgb, 255:red, 0; green, 0; blue, 255 }  ,fill opacity=1 ][line width=0.75]      (0, 0) circle [x radius= 3.69, y radius= 3.69]   ;
			\draw [color={rgb, 255:red, 0; green, 0; blue, 255 }  ,draw opacity=1 ]   (349.17,141.31) ;
			\draw [shift={(349.17,141.31)}, rotate = 0] [color={rgb, 255:red, 0; green, 0; blue, 255 }  ,draw opacity=1 ][fill={rgb, 255:red, 0; green, 0; blue, 255 }  ,fill opacity=1 ][line width=0.75]      (0, 0) circle [x radius= 3.69, y radius= 3.69]   ;
			\draw [shift={(349.17,141.31)}, rotate = 0] [color={rgb, 255:red, 0; green, 0; blue, 255 }  ,draw opacity=1 ][fill={rgb, 255:red, 0; green, 0; blue, 255 }  ,fill opacity=1 ][line width=0.75]      (0, 0) circle [x radius= 3.69, y radius= 3.69]   ;
			\draw [color={rgb, 255:red, 0; green, 0; blue, 255 }  ,draw opacity=1 ]   (284.36,121.56) ;
			\draw [shift={(284.36,121.56)}, rotate = 0] [color={rgb, 255:red, 0; green, 0; blue, 255 }  ,draw opacity=1 ][fill={rgb, 255:red, 0; green, 0; blue, 255 }  ,fill opacity=1 ][line width=0.75]      (0, 0) circle [x radius= 3.69, y radius= 3.69]   ;
			\draw [shift={(284.36,121.56)}, rotate = 0] [color={rgb, 255:red, 0; green, 0; blue, 255 }  ,draw opacity=1 ][fill={rgb, 255:red, 0; green, 0; blue, 255 }  ,fill opacity=1 ][line width=0.75]      (0, 0) circle [x radius= 3.69, y radius= 3.69]   ;
			\draw  [fill={rgb, 255:red, 74; green, 144; blue, 226 }  ,fill opacity=0.1 ][dash pattern={on 0.84pt off 2.51pt}] (236.49,122.78) .. controls (236.38,113.19) and (249.89,105.26) .. (266.67,105.06) .. controls (283.45,104.86) and (297.15,112.47) .. (297.26,122.06) .. controls (297.37,131.65) and (283.86,139.58) .. (267.08,139.78) .. controls (250.3,139.98) and (236.61,132.37) .. (236.49,122.78) -- cycle ;
			
			\draw (163,53.4) node [anchor=north west][inner sep=0.75pt]    {$\text{ker}( G)$};
			\draw (263,85.4) node [anchor=north west][inner sep=0.75pt]    {$B$};

		\end{tikzpicture}

	\end{center}
	\caption{In this example, a BAB-graph $G=(G,G_{1})$ is shown; in red we display a maximum matching of $G$, and in blue the vertices of $\text{corona}(G)$. Notice that $\left|\text{corona}(G)\right|+\left|\text{ker}(G)\right|=8+2=10<11=2\cdot5+1=2\alpha(G)+k$.
	}
	\label{Figura3}
	
\end{figure}

We will show in \cref{22} that $\left|\textnormal{corona}(G)\right|+\left|\textnormal{ker}(G)\right|\le 2\alpha(G)+k$ for every BAB-graph.

\begin{lemma}
	\label{16} Let $G$ be a graph and $I$ a critical independent set. Then
	\begin{itemize}
		\item If there exists $S\s N(I)$ such that $\left|N(S)\ii I\right|=\left|S\right|$, then $I-N(S)$ is a critical independent set.
		\item Let $S\s N(I)$ be a set of maximum cardinality such that $\left|N(S)\ii I\right|=\left|S\right|$, then $I-N(S)=\textnormal{ker}(G)$.
	\end{itemize}
\end{lemma}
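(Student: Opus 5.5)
The first item is the computation already carried out in the proof of \cref{6}. Given $S\s N(I)$ with $\left|N(S)\ii I\right|=\left|S\right|$, set $I^{\P}=I-N(S)$, which is independent as a subset of $I$. Every vertex of $N(I)-S$ keeps a neighbour in $I^{\P}$ or not. Either way $N(I^{\P})\s N(I)-S$, since a vertex of $S$ has all its $I$-neighbours inside $N(S)\ii I$. Hence
\[
\left|I^{\P}\right|-\left|N(I^{\P})\right|\ge\left|I\right|-\left|S\right|-\left(\left|N(I)\right|-\left|S\right|\right)=d(G).
\]
Maximality of $d(G)=d_{I}(G)$ then forces equality, so $I^{\P}$ is critical.

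For the second item, fix $S$ of maximum cardinality with $\left|N(S)\ii I\right|=\left|S\right|$ (the empty set qualifies, so $S$ exists), and put $J=I-N(S)$. By the first item $J$ is critical, so $\ker(G)\s J$. For the reverse inclusion it suffices to show $J\s K$ for every critical independent set $K$, since by \cref{9} $\ker(G)$ is itself critical.

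The key step is a Hall-type tightness analysis. The computation in \cref{6} shows $\left|N(T)\ii I\right|\ge\left|T\right|$ for all $T\s N(I)$, because otherwise one could delete $N(T)\ii I$ and add $T$ to exceed $d(G)$. Tight sets are closed under union by submodularity of $T\mapsto\left|N(T)\ii I\right|-\left|T\right|$, so the maximum-cardinality tight $S$ is the unique maximal tight set. Now let $K$ be critical. By \cref{9}, $I\ii K$ is critical, so we may assume $K\s I$. Put $T=N(I-K)\ii N(I)$, that is, $N(I-K)$. Criticality of $K$ and of $I$ gives
\[
\left|I\right|-\left|N(I)\right|=\left|K\right|-\left|N(K)\right|.
\]
This rearranges to $\left|I-K\right|=\left|N(I)-N(K)\right|$. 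Then $T^{\ast}=N(I)-N(K)$ satisfies $N(T^{\ast})\ii I\s I-K$, and together with the Hall inequality this yields that $T^{\ast}$ is tight and $N(T^{\ast})\ii I=I-K$. Thus $T^{\ast}\s S$, and so $I-K\s N(S)\ii I$, which means $J=I-N(S)\s K$.

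The main obstacle is this last tightness step. It requires checking that $N(T^{\ast})\ii I=I-K$ exactly, using that vertices of $I-K$ have neighbours only in $N(I)$ and that any of their neighbours lying in $N(K)$ would contradict the counting. Closure of tight sets under union also needs care when verifying that maximum cardinality coincides with maximality. If that check fails, I would argue by maximal matchings instead: all maximum matchings from $N(I)$ into $I$ leave $I-N(S)$ uncovered, and every critical $K\s I$ must contain these vertices.
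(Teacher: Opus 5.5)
Your proof is correct and is essentially the paper's argument. For $K=\ker(G)$, your $T^{\ast}=N(I)-N(K)$ is exactly the paper's $S\cup T_{2}$, and your count $|I-K|=|N(I)-N(K)|$ amounts to its $|T_{1}|=|T_{2}|$. You allow an arbitrary critical $K\subseteq I$ and place $T^{\ast}$ inside $S$ via union-closure of tight sets. For $K=\ker(G)\subseteq I-N(S)$, however, one has $S\subseteq T^{\ast}$ automatically, so maximality of $|S|$ gives $T^{\ast}=S$ at once. The ``obstacle'' you flag is already settled by your counting, and the fallback is not needed.

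One repair is needed: justify $|N(T)\cap I|\ge|T|$ by deleting $N(T)\cap I$ from $I$ without adding $T$. Deletion alone suffices, since $N(I-N(T))\subseteq N(I)-T$ already gives a difference exceeding $d(G)$. Adding $T$ can bring in neighbours of $T$ outside $I\cup(N(I)-T)$ and spoil the count.
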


\begin{proof}
	For the first part, we repeat the argument in the proof of \cref{6}.
	Now, suppose that $S\s N(I)$ is a set of maximum cardinality such that $\left|N(S)\ii I\right|=\left|S\right|$. By the first part, $I-N(S)$ is a critical independent set of $G$. Then we define the sets $T_{1}=I-\left(\left(N(S)\ii S\right)\u\textnormal{ker}(G)\right)$
	and $T_{2}=N(I)-\left(S\u N\left(\textnormal{ker}(G)\right)\right)$, see \cref{Figura4}.

	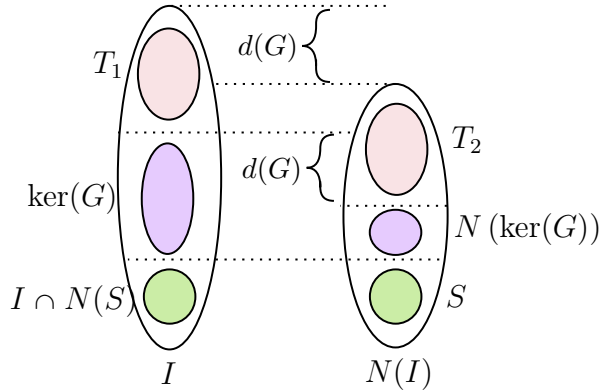
\begin{figure}[h!]
		
		\begin{center}

			\tikzset{every picture/.style={line width=0.75pt}} 
			
			\begin{tikzpicture}[x=0.75pt,y=0.75pt,yscale=-1,xscale=1]
				
				\draw   (152,134.7) .. controls (152,86.82) and (163.67,48) .. (178.07,48) .. controls (192.46,48) and (204.13,86.82) .. (204.13,134.7) .. controls (204.13,182.58) and (192.46,221.4) .. (178.07,221.4) .. controls (163.67,221.4) and (152,182.58) .. (152,134.7) -- cycle ;
				\draw   (265.8,153.9) .. controls (265.8,117.17) and (277.56,87.4) .. (292.07,87.4) .. controls (306.57,87.4) and (318.33,117.17) .. (318.33,153.9) .. controls (318.33,190.63) and (306.57,220.4) .. (292.07,220.4) .. controls (277.56,220.4) and (265.8,190.63) .. (265.8,153.9) -- cycle ;
				\draw  [dash pattern={on 0.84pt off 2.51pt}]  (292.07,87.4) -- (201.13,87.4) ;
				\draw  [dash pattern={on 0.84pt off 2.51pt}]  (287.13,48) -- (178.07,48) ;
				\draw  [dash pattern={on 0.84pt off 2.51pt}]  (314.13,176) -- (157.07,176) ;
				\draw  [dash pattern={on 0.84pt off 2.51pt}]  (318,148.9) -- (264.13,148.9) ;
				\draw  [dash pattern={on 0.84pt off 2.51pt}]  (269.13,111.9) -- (152.13,111.9) ;
				\draw   (261,113) .. controls (256.41,113) and (254.12,115.29) .. (254.12,119.88) -- (254.12,119.88) .. controls (254.12,126.43) and (251.83,129.7) .. (247.25,129.7) .. controls (251.83,129.7) and (254.12,132.97) .. (254.12,139.52)(254.12,136.58) -- (254.12,139.52) .. controls (254.12,144.11) and (256.41,146.4) .. (261,146.4) ;
				\draw   (259,50) .. controls (254.33,50) and (252,52.33) .. (252,57) -- (252,58.2) .. controls (252,64.87) and (249.67,68.2) .. (245,68.2) .. controls (249.67,68.2) and (252,71.53) .. (252,78.2)(252,75.2) -- (252,79.4) .. controls (252,84.07) and (254.33,86.4) .. (259,86.4) ;
				\draw  [fill={rgb, 255:red, 126; green, 211; blue, 33 }  ,fill opacity=0.4 ] (165.13,194.7) .. controls (165.13,187.13) and (170.95,181) .. (178.13,181) .. controls (185.31,181) and (191.13,187.13) .. (191.13,194.7) .. controls (191.13,202.27) and (185.31,208.4) .. (178.13,208.4) .. controls (170.95,208.4) and (165.13,202.27) .. (165.13,194.7) -- cycle ;
				\draw  [fill={rgb, 255:red, 126; green, 211; blue, 33 }  ,fill opacity=0.4 ] (279.13,194.7) .. controls (279.13,187.13) and (284.95,181) .. (292.13,181) .. controls (299.31,181) and (305.13,187.13) .. (305.13,194.7) .. controls (305.13,202.27) and (299.31,208.4) .. (292.13,208.4) .. controls (284.95,208.4) and (279.13,202.27) .. (279.13,194.7) -- cycle ;
				\draw  [fill={rgb, 255:red, 144; green, 19; blue, 254 }  ,fill opacity=0.22 ] (164.07,145.3) .. controls (164.07,129.89) and (169.89,117.4) .. (177.07,117.4) .. controls (184.25,117.4) and (190.07,129.89) .. (190.07,145.3) .. controls (190.07,160.71) and (184.25,173.2) .. (177.07,173.2) .. controls (169.89,173.2) and (164.07,160.71) .. (164.07,145.3) -- cycle ;
				\draw  [fill={rgb, 255:red, 144; green, 19; blue, 254 }  ,fill opacity=0.22 ] (279.07,162.3) .. controls (279.07,156) and (284.89,150.9) .. (292.07,150.9) .. controls (299.25,150.9) and (305.07,156) .. (305.07,162.3) .. controls (305.07,168.6) and (299.25,173.7) .. (292.07,173.7) .. controls (284.89,173.7) and (279.07,168.6) .. (279.07,162.3) -- cycle ;
				\draw  [fill={rgb, 255:red, 208; green, 2; blue, 27 }  ,fill opacity=0.11 ] (162.13,82.4) .. controls (162.13,69.7) and (169.07,59.4) .. (177.63,59.4) .. controls (186.19,59.4) and (193.13,69.7) .. (193.13,82.4) .. controls (193.13,95.1) and (186.19,105.4) .. (177.63,105.4) .. controls (169.07,105.4) and (162.13,95.1) .. (162.13,82.4) -- cycle ;
				\draw  [fill={rgb, 255:red, 208; green, 2; blue, 27 }  ,fill opacity=0.11 ] (277.13,120.4) .. controls (277.13,107.7) and (284.07,97.4) .. (292.63,97.4) .. controls (301.19,97.4) and (308.13,107.7) .. (308.13,120.4) .. controls (308.13,133.1) and (301.19,143.4) .. (292.63,143.4) .. controls (284.07,143.4) and (277.13,133.1) .. (277.13,120.4) -- cycle ;
				
				\draw (213,122.4) node [anchor=north west][inner sep=0.75pt]  [font=\small]  {$d( G)$};
				\draw (211,60.4) node [anchor=north west][inner sep=0.75pt]  [font=\small]  {$d( G)$};
				\draw (138,69.4) node [anchor=north west][inner sep=0.75pt]    {$T_{1}$};
				\draw (105,135.4) node [anchor=north west][inner sep=0.75pt]    {$\text{ker}( G)$};
				\draw (96,187.4) node [anchor=north west][inner sep=0.75pt]    {$I\cap N( S)$};
				\draw (275,224.4) node [anchor=north west][inner sep=0.75pt]    {$N( I)$};
				\draw (172,226.4) node [anchor=north west][inner sep=0.75pt]    {$I$};
				\draw (320,151.3) node [anchor=north west][inner sep=0.75pt]    {$N\left(\text{ker}( G)\right)$};
				\draw (319,107.4) node [anchor=north west][inner sep=0.75pt]    {$T_{2}$};
				\draw (316,187.4) node [anchor=north west][inner sep=0.75pt]    {$S$};

			\end{tikzpicture}

		\end{center}
		\caption{Illustration of the proof of \cref{16}.
		}
		\label{Figura4}
		
	\end{figure}
	\noindent Suppose that $I-N(S)\neq\textnormal{ker}(G)$, then $T_{1}\neq\emptyset$ and
	\[
	\left|I-N(S)\right|-\left|N(I-N(S))\right|=d(G)=\left|\textnormal{ker}(G)\right|-\left|N(\textnormal{ker}(G))\right|,
	\]
	\noindent that is,
	\[
	\left|T_{1}\right|=\left|I-N(S)\right|-\left|\textnormal{ker}(G)\right|=\left|N(I-N(S))\right|-\left|N(\textnormal{ker}(G))\right|=\left|T_{2}\right|.
	\]
	\noindent Hence $T_{2}\neq\emptyset$. Now we define $S^{\prime}=S\u T_{2}$ and obtain
	\begin{eqnarray*}
		\left|N(S^{\prime})\ii I\right| & = & \left|N(S)\ii I\right|+\left|N(T_{2})\ii(I-N(S))\right|\\
		& = & \left|S\right|+\left|T_{1}\right|\\
		& = & \left|S^{\prime}\right|.
	\end{eqnarray*}
	\noindent Which contradicts the maximality of $S$.
	\end{proof}

As a consequence, we have the following Hall-type condition to
characterize when a critical independent set is the kernel of the
graph.

\begin{corollary}
	\label{17} Let $G$ be a graph and $I$ a critical
	independent set. Then $I=\textnormal{ker}(G)$ if and only if
	\[
	\left|S\right|<\left|N(S)\cap I\right|,
	\]
	\noindent for all $S \subseteq N(I)$. 
\end{corollary}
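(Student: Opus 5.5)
The plan is to read the condition for nonempty $S\s N(I)$, since $S=\emptyset$ gives equality $0=0$ trivially. Both directions then follow from the Hall-type inequality in the proof of \cref{6} together with \cref{16}.

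\emph{Preliminary inequality.} For any critical independent set $I$ and any $S\s N(I)$ we have $\left|S\right|\le\left|N(S)\ii I\right|$. This comes from the computation at the start of the proof of \cref{6}: replacing $I$ by $\left(I-\left(N(S)\ii I\right)\right)\u S$ changes the difference by $2\left|S\right|-2\left|N(S)\ii I\right|$, and the difference can never exceed $d(G)$. So the strict condition in \cref{17} fails exactly when some nonempty $S\s N(I)$ attains equality $\left|N(S)\ii I\right|=\left|S\right|$.

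\emph{Direction ($\Rightarrow$).} Assume $I=\textnormal{ker}(G)$, and suppose some nonempty $S\s N(I)$ has $\left|N(S)\ii I\right|=\left|S\right|$.
\begin{enumerate}
\item By the first item of \cref{16}, the set $I-N(S)$ is a critical independent set.
\item Since $\left|N(S)\ii I\right|=\left|S\right|\ge1$, the set $I\ii N(S)$ is nonempty, so $I-N(S)$ is a proper subset of $I=\textnormal{ker}(G)$.
\item By definition, $\textnormal{ker}(G)$ is the intersection of all critical independent sets, so it is contained in $I-N(S)$. This contradicts step 2.
\end{enumerate}
Combined with the preliminary inequality, this gives the strict inequality for every nonempty $S$.

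\emph{Direction ($\Leftarrow$).} Assume $\left|S\right|<\left|N(S)\ii I\right|$ for every nonempty $S\s N(I)$. Then the only $S\s N(I)$ with $\left|N(S)\ii I\right|=\left|S\right|$ is $S=\emptyset$, which is therefore of maximum cardinality among such sets. The second item of \cref{16} then yields $\textnormal{ker}(G)=I-N(\emptyset)=I$.

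The only delicate point is the convention on $S$. The statement must be read over nonempty $S$, otherwise it is vacuous, and the proof relies on \cref{16} to handle the maximality argument. Beyond that, the argument is a direct unpacking of \cref{16}, and I expect no real obstacle.
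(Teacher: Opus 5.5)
Your argument is the one the paper intends. Corollary~\ref{17} is stated without a separate proof, as an immediate consequence of Lemma~\ref{16}, and both of your directions are sound: $(\Rightarrow)$ uses the first item of Lemma~\ref{16} together with minimality of $\textnormal{ker}(G)$, and $(\Leftarrow)$ uses the second item with $S=\emptyset$. You are also right that the condition must be read over nonempty $S$, since $S=\emptyset$ gives $0<0$.

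The one step that fails as written is your justification of the preliminary inequality $|S|\le|N(S)\cap I|$.

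\begin{itemize}
\item The set $I'=(I-(N(S)\cap I))\cup S$ does not in general have difference $d(G)+2|S|-2|N(S)\cap I|$. This is because $N(S)$ can contain vertices of $S$ itself, or vertices outside $I\cup N(I)$.
\item For example, take the path $v_1v_2v_3v_4$. Here $d(G)=0$ and $I=\{v_1\}$ is a critical independent set. For $S=\{v_2\}$ you get $I'=\{v_2\}$, whose difference is $1-2=-1$, not $0$.
\item Since all you know is $d_G(I')\le d(G)$, an overestimate of $d_G(I')$ tells you nothing.
\end{itemize}

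The displayed computation in the proof of Lemma~\ref{6} has the same defect, although the Hall-type conclusion stated there is true.

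The repair is to remove vertices rather than exchange them. No vertex of $I-N(S)$ has a neighbor in $S$, so $N(I-N(S))\subseteq N(I)-S$. Hence $d_G(I-N(S))\ge |I|-|N(S)\cap I|-|N(I)|+|S|=d(G)+|S|-|N(S)\cap I|$. Maximality of $d(G)$ then forces $|S|\le|N(S)\cap I|$. This is the same estimate that underlies the first item of Lemma~\ref{16}, and with this replacement your proof is complete.
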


As a second consequence of \cref{16}, \cref{7}, and \cref{10}, we have
an explicit way to find $\textnormal{ker}(G)$ from $A(G)$.

\begin{theorem}
	\label{18} Let $G$ be a BAB-graph and $S \subseteq A(G)$
	of maximum cardinality such that $\left|N(S)\cap\textnormal{nucleus}(G)\right|=\left|S\right|$.
	Then
	\[
	\textnormal{ker}(G)=\textnormal{nucleus}(G)-N(S).
	\]
\end{theorem}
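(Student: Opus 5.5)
The plan is to apply the second part of \cref{16} with the critical independent set $I=\textnormal{nucleus}(G)$. By \cref{10}, $\textnormal{nucleus}(G)=X$. By \cref{7}, $X$ is a critical independent set of $G$. So $I:=\textnormal{nucleus}(G)$ is a legitimate choice of critical independent set for \cref{16}.

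Next I identify the neighbourhood. \cref{7} gives $N(X)=A(G)$, so $N(I)=A(G)$. The family of sets over which \cref{16} maximizes is therefore
\[
\mathcal{F}=\left\{S\s N(I): \left|N(S)\ii I\right|=\left|S\right|\right\},
\]
and this is exactly the family $\{S\s A(G):\left|N(S)\ii\textnormal{nucleus}(G)\right|=\left|S\right|\}$ appearing in the statement of \cref{18}.

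The family is nonempty, since $S=\emptyset$ satisfies the condition trivially. So a set $S$ of maximum cardinality in $\mathcal{F}$ exists. The second bullet of \cref{16} then yields $\textnormal{ker}(G)=I-N(S)=\textnormal{nucleus}(G)-N(S)$, which is the claim.

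The main point to check is that \cref{16} really only needs $I$ critical and $S\s N(I)$ of maximum cardinality in $\mathcal{F}$, with no additional hypothesis. Its proof uses only the criticality of $I$, the first bullet (which makes $I-N(S)$ critical), and the maximality argument with $T_1,T_2$. All of these apply verbatim here.

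A minor subtlety is that $I\ii N(S)$ equals $I-(I-N(S))$. Thus writing $\textnormal{nucleus}(G)-N(S)$ is the same as removing $N(S)\ii\textnormal{nucleus}(G)$, matching the notation of \cref{16}. Beyond this identification, no real obstacle is expected; the theorem is a direct specialization of \cref{16}, combined with \cref{7} and \cref{10}.
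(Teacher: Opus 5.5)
Your proposal is correct and follows the paper's own route: the paper obtains \cref{18} directly from the second part of \cref{16}, applied to the critical independent set $I=\textnormal{nucleus}(G)=X$. It uses \cref{10} for the identification $\textnormal{nucleus}(G)=X$, and \cref{7} for the criticality of $X$ and for $N(X)=A(G)$, so the family of sets $S$ being maximized is exactly the one in the statement. Your remark that $S=\emptyset$ makes this family nonempty is a small detail the paper leaves implicit.
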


Now we study how maximum independent sets behave
in BAB-graphs, aiming to bound $\left|\text{corona}(G)\right|+\left|\text{ker}(G)\right|$.  

\begin{theorem}
	\label{19} \cite{berge2005some} An independent set
	$S$ is maximum if and only if every independent set disjoint
	from $S$ can be matched into $S$. 
\end{theorem}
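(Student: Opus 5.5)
The plan is to prove both directions of the equivalence in \cref{19} directly. The only tools needed are Hall's theorem and an alternating-exchange argument. Throughout, for an independent set $T$ disjoint from $S$, ``$T$ can be matched into $S$'' means there is a matching $M$ of $G$ with $M(T)\s S$. Equivalently, $T$ is matched in the bipartite graph $G[T,S]$ formed by the edges between $T$ and $S$.

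\emph{Sufficiency.} Suppose every independent set disjoint from $S$ can be matched into $S$, and let $S^{\P}$ be any independent set. Put $T=S^{\P}-S$; it is independent and disjoint from $S$, so there is an injective matching from $T$ into $S$. Its image lies in $S-S^{\P}$, since a vertex of $S\ii S^{\P}$ cannot be adjacent to a vertex of $T\s S^{\P}$ because $S^{\P}$ is independent. Hence $\left|S^{\P}-S\right|\le\left|S-S^{\P}\right|$, so $\left|S^{\P}\right|\le\left|S\right|$, and $S$ is maximum.

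\emph{Necessity.} Let $S$ be a maximum independent set and $T$ an independent set disjoint from $S$. I will verify Hall's condition in $G[T,S]$: for every $T_{0}\s T$ we need $\left|N(T_{0})\ii S\right|\ge\left|T_{0}\right|$. Suppose instead some $T_{0}$ has $\left|N(T_{0})\ii S\right|<\left|T_{0}\right|$. Consider
\[
S^{\P}=\left(S-N(T_{0})\right)\u T_{0}.
\]
This set is independent. Indeed, $T_{0}\s T$ is independent, $S-N(T_{0})$ is independent, and no vertex of $S-N(T_{0})$ is adjacent to $T_{0}$ by construction; also $T_{0}\ii S=\emptyset$. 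Its size is $\left|S\right|-\left|N(T_{0})\ii S\right|+\left|T_{0}\right|>\left|S\right|$, which contradicts the maximality of $S$. So Hall's condition holds, and Hall's theorem gives a matching of $T$ into $S$.

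I expect no real obstacle here. The only point needing care is that in the sufficiency direction the matching lands in $S-S^{\P}$ rather than merely in $S$. This is the independence observation above, and it is what makes the counting $\left|S^{\P}-S\right|\le\left|S-S^{\P}\right|$ legitimate.
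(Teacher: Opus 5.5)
Your proof is correct and complete. For sufficiency, you inject $S'-S$ into $S-S'$, and you correctly note that matched partners cannot lie in $S\cap S'$ because $S'$ is independent. For necessity, you apply Hall's theorem in $G[T,S]$, and any Hall violator $T_0$ is refuted by the strictly larger independent set $(S-N(T_0))\cup T_0$.

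The paper does not prove \cref{19}; it only quotes it from Berge, so there is no argument of the paper's to compare against. Your argument is the standard one.

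The paper only uses the ``if'' direction, in the proof of \cref{20}. Your version makes explicit that only \emph{independent} sets disjoint from the candidate need to be matched. This matters there, because the whole complement of $\textnormal{nucleus}(G)\cup S\cup U$ cannot be matched into it when $k\ge 1$. The vertices of a cycle $C(G_i)$ outside $S$ have neighbours only in $C(G_i)$ and in $A(G)$. They also outnumber the vertices of $C(G_i)$ in $S$ by one.
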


\begin{theorem}
	\label{20} Let $G=(B,G_{1},\dots,G_{k})$ be a BAB-graph,
	$U,W$ a bipartition of $G[C(G)]$, and $S$ a maximum independent
	set of $C(G_{1})\cup\dots\cup C(G_{k})$. Then the
	following items hold.
	\begin{itemize}
		\item $\textnormal{nucleus}(G)\cup S \cup U$ is a maximum independent set.
		\item $\textnormal{corona}(G)\cup A(G)=V(G)$.
		\item $\textnormal{core}(G)\subseteq\textnormal{nucleus}(G)$.
		\item $2\alpha(G)+k=\left|D\right|+\left|\textnormal{nucleus}(G)\right|+\left|C\right|$.
	\end{itemize}
\end{theorem}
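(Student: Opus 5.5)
The plan is to first pin down $\alpha(G)$ exactly, and then read off all four items from one explicit family of maximum independent sets. Write $X=\textnormal{nucleus}(G)$ (\cref{10}), $Y=D(G)-X=V(C(G_{1}))\cup\dots\cup V(C(G_{k}))$ (\cref{5}), and note that $V(G)=X\cup Y\cup A(G)\cup C(G)$.

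\textbf{Lower bound.} The set $J=X\cup S\cup U$ is independent. Inside each part this holds: $X$ is independent (\cref{7}), $S$ is independent by choice, and $U$ is one side of the bipartition of $G[C(G)]$. Between parts we use three facts.
\begin{itemize}
\item $N(X)=A(G)$, so $X$ has no neighbours in $Y\cup C(G)$.
\item $Y$ and $C(G)$ are non-adjacent, because $Y\subseteq D(G)$ and vertices of $C(G)$ are not adjacent to $D(G)$ by definition.
\item $X$ and $Y$ lie in different components of $G[D(G)]$.
\end{itemize}
Each $C(G_{i})$ is an odd cycle of length $2m_{i}+1$ with $\alpha=m_{i}$. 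Hence $|S|=\sum_i m_i=(|Y|-k)/2$, and
\[
|J|=|X|+\tfrac{|Y|-k}{2}+\tfrac{|C(G)|}{2}.
\]
Here I need $|U|=|W|=|C(G)|/2$, which holds because $G[C(G)]$ has a perfect matching by \cref{ge}; in fact any bipartition works once the matching forces equal sides.

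\textbf{Upper bound.} Let $T$ be any independent set. It meets each $C(G_{i})$ in at most $m_{i}$ vertices, and it meets $C(G)$ in at most $|C(G)|/2$ vertices, since $T\cap C(G)$ is independent and $C(G)$ is perfectly matched within itself. It remains to show $|T\cap(X\cup A(G))|\le|X|$. For this, take a maximum matching $M$ as in \cref{ge}: it matches $A(G)$ into distinct components of $G[D(G)]$. Ideally $A(G)$ is matched into $X$ injectively, which is the matching $M'$ built in the proof of \cref{7}, where $N(X)=A(G)$ and $A(G)$ is matched into $X$. Then every vertex $a\in T\cap A(G)$ has its partner $M'(a)\in X$ outside $T$. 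Consequently $|T\cap(X\cup A)|\le|X|$.

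This is the main obstacle. If some $a\in A$ is matched into a cycle $C(G_i)$ instead, the cycle loses a vertex and the count must be redone. I would avoid this by always using $M'$, with $|A|\le|X|$ by criticality ($d\ge0$), and checking in \cref{7} that $M'$ sends $A$ into $X$. Summing the three bounds gives $|T|\le|J|$, so $\alpha(G)=|X|+(|Y|-k)/2+|C(G)|/2$ and the first item follows.

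\textbf{Remaining items.} The fourth item now follows from
\[
2\alpha(G)+k=2|X|+|Y|+|C(G)|=|D|+|X|+|C|,
\]
using $|D|=|X|+|Y|$.

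For the second item, vary the choices. Replacing $U$ by $W$ covers $C(G)$. Rotating $S$ around each odd cycle covers every vertex of $Y$, since each vertex of $C_{2m+1}$ lies in some maximum independent set of size $m$. $X$ lies in every such set. Hence $\textnormal{corona}(G)\supseteq X\cup Y\cup C(G)=V(G)-A(G)$.

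For the third item, intersect $X\cup S\cup U$ and $X\cup S'\cup W$, where $S,S'$ are chosen with $S\cap S'=\emptyset$; this is possible because for each cycle the maximum independent sets have empty intersection. The intersection is $X$, so $\textnormal{core}(G)\subseteq X=\textnormal{nucleus}(G)$.
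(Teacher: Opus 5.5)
Your proof is correct. It uses the same certificate as the paper: the maximum matching $M'$ from the proof of \cref{7}, which matches $A(G)$ injectively into $X=\textnormal{nucleus}(G)$, is perfect on $C(G)$, and is near-perfect with one exposed vertex on each $C(G_i)$. Where you differ is in how you use $M'$ for the first item.

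The paper feeds $M'$ into Berge's criterion (\cref{19}), arguing that every independent set disjoint from $X\cup S\cup U$ can be matched into it. You instead bound an arbitrary independent set $T$ directly, part by part: $X\cup A(G)$ via the $M'$-pairs, $C(G)$ via its perfect matching, and each cycle by $m_i$.

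What each route buys:
\begin{itemize}
\item Your version is self-contained and yields $\alpha(G)=|X|+(|Y|-k)/2+|C(G)|/2$ at once, so the fourth item is immediate.
\item It also avoids a looseness in the paper's wording. The set $V(C(G_i))-S$ has $m_i+1$ vertices against $m_i$ in $S\cap V(C(G_i))$, so only its independent subsets can be matched into $S$. Your bound $|T\cap V(C(G_i))|\le m_i$ handles this cleanly.
\item The paper's version is shorter, given Berge's theorem.
\end{itemize}

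The obstacle you flag is not a real one. Since $M'$ exposes a vertex of each $C(G_i)$ and, by \cref{ge}, is near-perfect on each component of $G[D(G)]$, no cycle vertex is $M'$-matched to $A(G)$. So $A(G)$ is sent injectively into $X$, which already gives $|A(G)|\le|X|$ without appealing to $d(G)\ge 0$.

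For the third item, the fact that the maximum independent sets of a cycle have empty intersection does not by itself produce disjoint $S,S'$. On $C_{2m+1}$, however, the sets $\{v_1,v_3,\dots,v_{2m-1}\}$ and $\{v_2,v_4,\dots,v_{2m}\}$ are disjoint. Alternatively, you can simply intersect over all choices of $S$ and of the side $U$ or $W$, which gives exactly $X$.
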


\begin{proof}
	Reasoning as in the proof of \cref{7}, we can choose
	a maximum matching $M$ of $G$ such that it saturates exactly one vertex
	of each cycle $C(G_{i})$, for $i=1,\dots,k$. Then, by \cref{7}
	and \cref{ge}, $M$ matches the vertices of $A$ into $\textnormal{nucleus}(G)$,
	matches vertices of $C(G_{i})$ into vertices of $C(G_{i})$, and $U$
	into $W$. Hence, it is possible to match the vertices of $V(G)-\left(\textnormal{nucleus}(G)\cup S\cup U\right)=A(G)\cup W\cup V(C(G_{1}))\cup\dots\cup V(G(G_{k}))$
	into $\textnormal{nucleus}(G)\cup S\cup U$. Therefore, any independent set
	in $V(G)-\left(\textnormal{nucleus}(G)\cup S\cup U\right)$ can also
	be matched into $\textnormal{nucleus}(G)\cup S\cup U$. Then, by \cref{19},
	$\textnormal{nucleus}(G)\cup S\cup U$ is a maximum independent set of
	$G$. 
	
	We can modify $\textnormal{nucleus}(G)\cup S\cup U$ by moving the vertices
	in each odd cycle $C(G_{i})$, thus obtaining $V(C(G_{i}))\subseteq\textnormal{corona}(G)$.
	Furthermore, by symmetry, $\textnormal{nucleus}(G)\cup S\cup W$ is also a maximum
	independent set. Therefore, $\textnormal{corona}(G)\cup A(G)=V(G)$.
	
	By the reasoning of modifying $\textnormal{nucleus}(G)\cup S\cup U$ within $S$,
	we obtain $\textnormal{core}(G)\subseteq\textnormal{nucleus}(G)\cup U$, and
	in the same way $\textnormal{core}(G)\subseteq\textnormal{nucleus}(G)\cup W$,
	hence $\textnormal{core}(G)\subseteq\textnormal{nucleus}(G)$.
	
	Finally,
	\begin{align*}
		\alpha(G) & =\alpha(S)+\left|\textnormal{nucleus}(G)\right|+\left|U\right|\\
		& =\alpha(C(G_{1}))+\dots+\alpha(C(G_{k}))+\left|\textnormal{nucleus}(G)\right|+\frac{\left|C\right|}{2}\\
		& =\frac{\left|C(G_{1})\right|-1}{2}+\dots+\frac{\left|C(G_{k})\right|-1}{2}+\left|\textnormal{nucleus}(G)\right|+\frac{\left|C\right|}{2}
	\end{align*}
	
	\noindent Therefore,
	\begin{align*}
		2\alpha(G) & =\left(\left|C(G_{1})\right|+\dots+\left|C(G_{k})\right|+\left|\textnormal{nucleus}(G)\right|\right)-k+\left|\textnormal{nucleus}(G)\right|+\left|C\right|\\
		2\alpha(G)+k & =\left|D\right|+\left|\textnormal{nucleus}(G)\right|+\left|C\right|.
	\end{align*}
	
	\noindent As we wanted to show. 
\end{proof}

From \cref{11} and \cref{20} we have the following chain
of inclusions in BAB-graphs.
\[
\textnormal{ker}(G)\subseteq\textnormal{core}(G)\subseteq\textnormal{nucleus}(G)\subseteq\textnormal{diadem}(G)\subseteq\textnormal{corona}(G).
\]

\begin{theorem}	[\label{21}\cite{kevin2025RDG}] Let $G$ be an $R$-disjoint
	graph with exactly $k$ disjoint odd cycles. Then
	\[
	\left|\textnormal{corona}(G)\right|+\left|\textnormal{ker}(G)\right|=2\alpha(G)+k.
	\]
\end{theorem}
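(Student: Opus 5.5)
We prove the identity for an $R$-disjoint graph by writing $G$ as a BAB-graph and computing both sides from the Gallai--Edmonds decomposition. By \cref{3}, $G=(B,G_{1},\dots,G_{k})$ with $B=G[B(G)]$ and $G_{i}=G[R(C_{i})]$, where $C_{1},\dots,C_{k}$ are the odd cycles. The last item of \cref{20} already gives
\[
2\alpha(G)+k=\left|D(G)\right|+\left|\textnormal{nucleus}(G)\right|+\left|C(G)\right|.
\]
So it suffices to identify $\textnormal{corona}(G)$ and $\textnormal{ker}(G)$ and check that their sizes add up to this quantity.

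\textbf{Step 1: the corona.} By \cref{20}, $\textnormal{corona}(G)\cup A(G)=V(G)$. We claim that $\textnormal{corona}(G)\cap A(G)=\emptyset$, which gives
\[
\left|\textnormal{corona}(G)\right|=\left|V(G)\right|-\left|A(G)\right|=\left|D(G)\right|+\left|C(G)\right|.
\]
To prove the claim, let $M$ be the maximum matching constructed in the proof of \cref{20}. It matches $A(G)$ into $X=\textnormal{nucleus}(G)$, $U$ into $W$, and each cycle into itself up to one vertex. A maximum independent set $S'$ has size $\alpha(G)$, which equals one endpoint per $M$-edge plus the $k$ unmatched vertices. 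Hence $S'$ contains exactly one endpoint of every $M$-edge and every $M$-unsaturated vertex. A vertex $a\in A(G)$ with $a\in S'$ would force $M(a)\notin S'$. We expect to rule this out with a counting argument: $N(X)=A(G)$ by \cref{7}, and $X$ is critical, so $|X|-|N(X)|=d(G)$ is the number of $M$-unsaturated vertices of $X$.

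\textbf{Step 2: the kernel.} Since $R$-disjoint graphs satisfy $\textnormal{ker}(G)=\textnormal{core}(G)$, we need $\left|\textnormal{ker}(G)\right|=\left|\textnormal{nucleus}(G)\right|$. With the chain $\textnormal{ker}(G)\subseteq\textnormal{core}(G)\subseteq\textnormal{nucleus}(G)$, this amounts to $\textnormal{ker}(G)=\textnormal{nucleus}(G)$. By \cref{18}, this holds exactly when the maximum $S\subseteq A(G)$ with $\left|N(S)\cap X\right|=\left|S\right|$ is empty. By \cref{17}, that means the Hall surplus condition $|N(S)\cap X|>|S|$ for all nonempty $S\subseteq A(G)$.

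\textbf{Step 3: verifying the surplus, the main obstacle.} \cref{Figura3} shows that this surplus condition fails for general BAB-graphs, so Step 3 must use $R$-disjointness in an essential way. The plan uses item 2 of \cref{2}: the boundary vertices of each $R(C_{i})$ lie in $A(G)$, and every vertex of $R(C_{i})$ lies on a flower.

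Fix a nonempty $S\subseteq A(G)$ with $\left|N(S)\cap X\right|=\left|S\right|$. Then the vertices of $N(S)\cap X$ are always matched into $S$. Take $a\in S$ lying in some $R(C_{i})$. Alternating along a stem from the cycle of a flower through $a$ produces a maximum matching that leaves a vertex of $N(a)\cap X$ exposed while keeping $S$ saturated into $X$. This contradicts tightness.

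If $S\subseteq V(B)$, we instead use the Dulmage--Mendelsohn structure of the bipartite graph $B$. There the kernel equals the core, and edges leaving $B$ start only in $A(B)\cup C(B)$. Together these should show that tight sets inside $B$ cannot occur in $A(G)$. We expect this case split and the alternating-path bookkeeping to be the delicate part.

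Combining the three steps,
\[
\left|\textnormal{corona}(G)\right|+\left|\textnormal{ker}(G)\right|=\left|D(G)\right|+\left|C(G)\right|+\left|\textnormal{nucleus}(G)\right|=2\alpha(G)+k.
\]
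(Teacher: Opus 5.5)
\textbf{The gap: both intermediate claims are false for $R$-disjoint graphs.} Your plan splits the identity into two separate claims: $\textnormal{corona}(G)\cap A(G)=\emptyset$ (Step 1) and $\textnormal{ker}(G)=\textnormal{nucleus}(G)$ (Steps 2--3). Both already fail for $R$-disjoint graphs, so no case analysis in Step 3 can establish them.

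Take the triangle $abc$ with the pendant path $c\,p\,q$. It is almost bipartite and non-K\"onig--Egerv\'ary ($\alpha=\mu=2$ on five vertices). For $M=\{ab,cp\}$, the flower with blossom $abc$ and stem $c\,p\,q$ covers every vertex, so $G$ is $R$-disjoint with $k=1$ and $B=\emptyset$.

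Here $D(G)=\{a,b,c,q\}$, $A(G)=\{p\}$, $C(G)=\emptyset$ and $\textnormal{nucleus}(G)=X=\{q\}$. But $d(G)=0$, so $\emptyset$ is critical and $\textnormal{ker}(G)=\emptyset\neq X$. Moreover $\{a,p\}$ is a maximum independent set, so $p\in\textnormal{corona}(G)\cap A(G)$.

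The set $S=\{p\}$ is a nonempty tight set ($|N(S)\cap X|=|S|=1$), so the contradiction sought in Step 3 does not exist. Your assertion that $N(S)\cap X$ is always matched into $S$ fails for $M=\{ab,cp\}$, and exposing $q$ forces $p$ to be matched outside $X$. The identity still holds here ($5+0=2\cdot 2+1$) only because the two errors cancel.

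The count in Step 1 is also wrong. By the proof of \cref{20}, $\alpha(G)$ equals the number of $M$-edges plus $d(G)$, not plus $k$. An independent set cannot contain the base of $C(G_i)$ together with one endpoint of every $M$-edge inside $C(G_i)$.

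\textbf{What actually has to be proved.} The needed statement is the balance $|\textnormal{corona}(G)\cap A(G)|=|\textnormal{nucleus}(G)-\textnormal{ker}(G)|$. This is exactly the equality case of the single inequality in the proof of \cref{22}. (The paper does not reprove the statement; it quotes it from \cite{kevin2025RDG}.)

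An equivalent formulation: $\textnormal{ker}(G)\subseteq\textnormal{core}(G)$ forces $\textnormal{corona}(G)\cap N(\textnormal{ker}(G))=\emptyset$, and $\textnormal{ker}(G)$ is critical. Hence for a BAB-graph one always has $|\textnormal{corona}(G)|+|\textnormal{ker}(G)|\le |V(G)|-|N(\textnormal{ker}(G))|+|\textnormal{ker}(G)|=|V(G)|+d(G)=2\alpha(G)+k$. Equality holds if and only if $\textnormal{corona}(G)\cup N(\textnormal{ker}(G))=V(G)$. Since $\textnormal{corona}(G)\cup A(G)=V(G)$, this says every vertex of $A(G)$ with no neighbour in $\textnormal{ker}(G)$ lies in some maximum independent set.

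This covering property, not the emptiness of $\textnormal{corona}(G)\cap A(G)$, is where $R$-disjointness must be used. It fails in \cref{Figura3}, where the vertex of $A(G)$ adjacent to $B$ is neither in the corona nor adjacent to $\textnormal{ker}(G)$. For $R$-disjoint graphs it is exactly what is provided by $\textnormal{ker}(G)=\textnormal{core}(G)$ together with $\textnormal{corona}(G)\cup N(\textnormal{core}(G))=V(G)$, both from the theorem quoted in the introduction. A self-contained proof would have to establish this covering property directly.
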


As shown in \cref{Figura3}, the previous result does not hold
in BAB-graphs. We will show that $2\alpha(G)+k$ is an upper bound for BAB-graphs. This bound is tight,
since a BAB-graph can be an $R$-disjoint graph.

\begin{theorem}
	\label{22} Let $G=(B,G_{1},\dots,G_{k})$ be a BAB-graph,
	then
	\[
	\left|\textnormal{ker}(G)\right|+\left|\textnormal{corona}(G)\right|\le 2\alpha(G)+k=d(G)+\left|G\right|.
	\]
\end{theorem}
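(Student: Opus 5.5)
We prove the inequality and the identity separately, in both cases working in the Gallai--Edmonds decomposition $D=D(G)$, $A=A(G)$, $C=C(G)$, with $X=\textnormal{nucleus}(G)$ as in \cref{7} and \cref{10}.

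\emph{The identity.} By \cref{7}, $X$ is a critical independent set with $N(X)=A$, so $d(G)=|X|-|A|$. Since $V(G)$ is the disjoint union of $D$, $A$ and $C$, we get
\[
d(G)+|G|=|X|-|A|+|D|+|A|+|C|=|D|+|X|+|C|,
\]
which equals $2\alpha(G)+k$ by the last item of \cref{20}.

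\emph{The inequality.} The natural approach is to bound $\textnormal{corona}(G)$ from above. Write $\textnormal{corona}(G)=V(G)-Z$, where $Z$ is the set of vertices lying in no maximum independent set. It then suffices to show
\[
|Z|\ge |\textnormal{ker}(G)|-d(G)=|\textnormal{ker}(G)|-|X|+|A|,
\]
since this gives $|\textnormal{corona}(G)|+|\textnormal{ker}(G)|\le |G|+d(G)$. By \cref{20}, $Z\subseteq A$, because $\textnormal{corona}(G)\cup A=V(G)$. The target inequality is therefore equivalent to
\[
|A\cap \textnormal{corona}(G)|\le |X|-|\textnormal{ker}(G)|=|X\cap N(S)|=|S|,
\]
where $S\subseteq A$ is the set of maximum cardinality from \cref{18}. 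For that $S$, \cref{18} gives $\textnormal{ker}(G)=X-N(S)$ and $|N(S)\cap X|=|S|$.

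So the key step is to show $A\cap\textnormal{corona}(G)\subseteq S$, or at least $|A\cap \textnormal{corona}(G)|\le |S|$. Take $v\in A$ and a maximum independent set $J$ containing $v$. The plan is to compare $J$ with the maximum independent set $X\cup S'\cup U$ of \cref{20}, where $S'$ is a maximum independent set of the cycles, using the matching $M$ from the proof of \cref{20}. That matching sends $A$ into $X$, sends $W$ to $U$, and is near-perfect on each cycle.

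The counting goes as follows. $J$ contains $J\cap A$, which blocks the neighbours $N(J\cap A)\cap X$. On $C$ and on the cycles, $J$ can gain at most $|U|$ and $\alpha$ of each cycle respectively. Hence maximality of $J$ forces $|N(J\cap A)\cap X|\le|J\cap A|$. Combined with Hall's condition, which follows from $X$ being critical and the proof of \cref{6}, this gives $|N(J\cap A)\cap X|=|J\cap A|$.

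Now let $T=A\cap \textnormal{corona}(G)$. Each $v\in T$ lies in some tight set $J_v\cap A$. Unions of tight sets are tight, by the same submodularity computation as in \cref{16}, where $S\cup T_2$ is shown to remain tight. Therefore $T$ is contained in a single tight subset of $A$. By the maximality of $S$ and the uniqueness argument of \cref{16}, $T\subseteq S$, giving $|T|\le|S|$.

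The main obstacle is this tightness step. One must check carefully that any vertex of $A$ in a maximum independent set forces equality in Hall's condition towards $X$. This requires controlling how $J$ behaves on $C$ and on the odd cycles, which may have neighbours in $A$. The route is to use that $M$ restricted to $C\cup\bigcup_i V(C(G_i))$ shows these parts contribute at most $|U|+\sum_i\alpha(C(G_i))$ to any independent set, regardless of $A$. The closure of tight sets under union should then follow from the cardinality identity $|N(S_1\cup S_2)\cap X|+|N(S_1\cap S_2)\cap X|\le |N(S_1)\cap X|+|N(S_2)\cap X|$ together with Hall's inequality.
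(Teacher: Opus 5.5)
Your proof is correct. The identity part is exactly the paper's. The difference lies in how you bound $|A(G)\cap\textnormal{corona}(G)|$ by $|\textnormal{nucleus}(G)|-|\ker(G)|$.

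The paper gets this bound in two lines from the general inclusion $\ker(G)\subseteq\textnormal{core}(G)$. No maximum independent set meets $N(\ker(G))\subseteq A(G)$. Since $\ker(G)$ and $X=\textnormal{nucleus}(G)$ are both critical with $N(X)=A(G)$, counting alone gives $|A(G)-N(\ker(G))|=|X|-|\ker(G)|$.

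You never use $\ker(G)\subseteq\textnormal{core}(G)$. Instead you show that $J\cap A(G)$ is tight for every maximum independent set $J$. You do this by comparing $|J|$ with $\alpha(G)=|X|+|C(G)|/2+\sum_i\alpha(C(G_i))$ from \cref{20}, and by using Hall's condition, which follows from the criticality of $X$. You then use submodularity of $S'\mapsto|N(S')\cap X|$ to see that tight sets are closed under union. Hence the maximum-cardinality tight set $S$ of \cref{18} contains every tight set. It is this union-closure, rather than anything specific to \cref{16}, that yields $A(G)\cap\textnormal{corona}(G)\subseteq S$.

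Note that $S=A(G)-N(\ker(G))$: indeed $N(\ker(G))\subseteq A(G)-S$, and both sets have size $|A(G)|-|S|$. So the two proofs establish the same inclusion.

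The paper's route is shorter but leans on an external theorem. Yours is self-contained within the BAB structure and yields more. Equality in your count forces $J\cap X=X-N(J\cap A(G))\supseteq X-N(S)=\ker(G)$. So you in effect reprove $\ker(G)\subseteq\textnormal{core}(G)$ for BAB-graphs, and you also get an explicit description of how every maximum independent set meets $A(G)$ and $X$.
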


\begin{proof}
	By \cref{7}, $N(\textnormal{ker}(G))\subseteq A(G)$, but since
	$\textnormal{ker}(G)\subseteq\textnormal{core}(G)$, we have
	$\textnormal{corona}(G)\cap A(G)\cap N(\textnormal{ker}(G))=\emptyset$.
	Hence, $\textnormal{corona}(G)\cap A(G)\subseteq A(G)-N(\textnormal{ker}(G))$, and then,
	by \cref{18},
	\begin{eqnarray*}
		\left|\textnormal{corona}(G)\cap A(G)\right| & \le & \left|A(G)-N(\textnormal{ker}(G))\right|\\
		& = & \left|N(A(G)-N(\textnormal{ker}(G)))\cap \textnormal{nucleus}(G)\right|\\
		& = & \left|\textnormal{nucleus}(G)-\textnormal{ker}(G)\right|.
	\end{eqnarray*}
	\noindent Finally, by \cref{20},
	\begin{align*}
		2\alpha(G)+k & =\left|D(G)\right|+\left|C(G)\right|+\left|\textnormal{nucleus}(G)\right|\\
		& =\left|\textnormal{corona}(G)\right|-\left|\textnormal{corona}(G)\cap A(G)\right|+\left|\textnormal{nucleus}(G)\right|\\
		& =\left|\textnormal{corona}(G)\right|-\left|\textnormal{corona}(G)\cap A(G)\right|+\left|\textnormal{ker}(G)\right|+\left|\textnormal{nucleus}(G)-\textnormal{ker}(G)\right|\\
		& \ge \left|\textnormal{corona}(G)\right|-\left|\textnormal{corona}(G)\cap A(G)\right|+\left|\textnormal{ker}(G)\right|+\left|\textnormal{corona}(G)\cap A(G)\right|\\
		& =\left|\textnormal{corona}(G)\right|+\left|\textnormal{ker}(G)\right|.
	\end{align*}
	\noindent The second equality follows directly from \cref{7}, since
	\[
	\left|D(G)\right|+\left|C(G)\right|+\left|\textnormal{nucleus}(G)\right|=\left|D(G)\right|+\left|C(G)\right|+\left|A(G)\right|+d(G).
	\]
	\noindent Therefore, $2\alpha(G)+k=d(G)+\left|G\right|.$
\end{proof}

\section{Open Problems}\label{sss5}

\cref{22} motivates the following

\begin{conjecture}
	For every graph $G$, 
	\[
	\left|\textnormal{corona}(G)\right|+\left|\textnormal{ker}(G)\right|\le 2\alpha(G)+k,
	\]
	where $k$ is the number of odd cycles in $G$. Alternatively, $k$ could
	be the number of odd cycles in $D(G)$, or the maximum number of
	disjoint odd cycles.
\end{conjecture}

\begin{problem}
	Characterize graphs with $k$ odd cycles
	such that 
	\[
	\left|\textnormal{corona}(G)\right|+\left|\textnormal{core}(G)\right|>2\alpha(G)+k.
	\] 
\end{problem}

\begin{problem}
	Characterize graphs satisfying 
	\[
	\textnormal{core}(G)\subseteq\textnormal{nucleus}(G)\subseteq\textnormal{diadem}(G).
	\] 
\end{problem}

\begin{problem}
	Find an explicit formula for $\textnormal{corona}(G)$
	in BAB-graphs.
\end{problem}

\begin{problem}
	Find bounds for 
	\[
	\left|\textnormal{corona}(G)\right|+\left|\textnormal{ker}(G)\right|
	\quad \text{and} \quad
	\left|\textnormal{corona}(G)\right|-\left|\textnormal{ker}(G)\right|
	\]
	for arbitrary graphs.
\end{problem}

\section*{Acknowledgments}

This work was partially supported by Universidad Nacional de San Luis (Argentina), PROICO 03-0723, MATH AmSud, grant 22-MATH-02, Agencia I+D+i (Argentina), grants PICT-2020-Serie A-00549 and PICT-2021-CAT-II-00105, CONICET (Argentina) grant PIP 11220220100068CO.

\section*{Declaration of generative AI and AI-assisted technologies in the writing process}
During the preparation of this work the authors used ChatGPT-3.5 in order to improve the grammar of several paragraphs of the text. After using this service, the authors reviewed and edited the content as needed and take full responsibility for the content of the publication.

\section*{Data availability}

Data sharing not applicable to this article as no datasets were generated or analyzed during the current study.

\section*{Declarations}

\noindent\textbf{Conflict of interest} \ The authors declare that they have no conflict of interest.

\bibliographystyle{apalike}

\bibliography{TAGcitasV2025}

\end{document}